%%%%%%%%%%%%%%%%%%%%%%%%%%%%%%%%%%%%%%%%%%%%%%%%
%
%
%
%%%%%%%%%%%%%%%%%%%%%%%%%%%%%%%%%%%%%%%%%%%%%%%%
\documentclass[11pt]{amsart}
\usepackage{amssymb,mathrsfs,graphicx,enumerate}
\usepackage{amsmath,amsfonts,amssymb,amscd,amsthm,bbm}
\usepackage{kotex}
\usepackage[retainorgcmds]{IEEEtrantools}
\usepackage{colortbl}
\usepackage{tikz}
\usetikzlibrary{matrix}
\allowdisplaybreaks
\DeclareMathOperator*{\argmax}{arg\,max}
\topmargin-0.1in \textwidth6.in \textheight8.5in \oddsidemargin0in
\evensidemargin0in
\title[The kinetic Lohe Hermitian sphere model]{Emergent behaviors of the kinetic Lohe Hermitian sphere model}

\author[Byeon]{Junhyeok Byeon}
\address[Junhyeok Byeon]{\newline Department of Mathematical Sciences\newline Seoul National University, Seoul 08826, Republic of Korea}
\email{giugi2486@snu.ac.kr}

\author[Ha]{Seung-Yeal Ha}
\address[Seung-Yeal Ha]{\newline Department of Mathematical Sciences and Research Institute of Mathematics \newline Seoul National University, Seoul 08826 and \newline
Korea Institute for Advanced Study, Hoegiro 85, 02455, Seoul, Republic of Korea}
\email{syha@snu.ac.kr}

\author[Hwang]{Gyuyoung Hwang}
\address[Gyuyoung Hwang]{\newline Department of Mathematical Sciences\newline Seoul National University, Seoul 08826, Republic of Korea}
\email{hgy0407@snu.ac.kr}

\author[Park]{Hansol Park}
\address[Hansol Park]{\newline Department of Mathematical Sciences\newline Seoul National University, Seoul 08826, Republic of Korea}
\email{hansol960612@snu.ac.kr}

\newtheorem{theorem}{Theorem}[section]
\newtheorem{lemma}{Lemma}[section]
\newtheorem{corollary}{Corollary}[section]
\newtheorem{proposition}{Proposition}[section]
\newtheorem{remark}{Remark}[section]

\newtheorem{definition}{Definition}[section]

\newcommand{\bbr}{\mathbb R}

\newcommand{\bbs}{\mathbb S}
\newcommand{\bbz}{\mathbb Z}

\newcommand{\bbc}{\mathbb C}

\makeatletter
\@namedef{subjclassname@2020}{\textup{2020} Mathematics Subject Classification}
\makeatother

\begin{document}

\date{\today}

\subjclass[2020]{70G60, 34D06, 70F10}  \keywords{Emergence, Kuramoto model, Lohe Hermitian sphere model, order parameter}

\thanks{\textbf{Acknowledgment.} The work of S.-Y. Ha was supported by National Research Foundation of Korea(NRF-2020R1A2C3A01003881).}

\begin{abstract}
We study a global well-posedness of measure-valued solutions to the kinetic Lohe Hermitian sphere(LHS) model derived from the Lohe tensor(LT) model on the set of rank-1 complex tensors(i.e. complex vectors) with the same size and investigate emergent behaviors. The kinetic LHS model corresponds to a complex analogue of the kinetic LS model which has been extensively studied in the literature on the aggregation modeling of Lohe particles on the unit sphere in Euclidean space. In this paper, we provide several frameworks in terms of system parameters and initial data leading to the local and global well-posedness of measure-valued solutions. In particular, we show emergent behaviors of the kinetic LHS model with the same free flows by analyzing the temporal evolution of the order parameter. 
\end{abstract}
\maketitle \centerline{\date}

%\tableofcontents

\section{Introduction} \label{sec:1}
\setcounter{equation}{0} 
Emergent behaviors are ubiquitous in biological complex systems, e.g., aggregation of bacteria \cite{T-B-L, T-B}, schooling of fish, flocking of birds, the synchronous firing of fireflies, and neurons \cite{A-B, B-B, Pe} etc. These collective phenomena were first modeled by two pioneers, Arthur Winfree \cite{Wi2} and Yoshiki Kuramoto \cite{Ku2}, almost a half-century ago. Since then, several mathematical models were proposed and studied from various points of view. Among them, our main interest lies in the LHS(Lohe Hermitian sphere) model \cite{H-P2} corresponding to the special case of the LT(Lohe tensor) model \cite{H-P1}. The LT model is a higher-dimensional extension of low-dimensional aggregation models such as the Kuramoto model \cite{A-B, B-C-M, C-H-J-K, C-S, D-X, D-B1, D-B, H-K-R}, swarm sphere models \cite{C-H5, HKLN, J-C, Lo-1, Lo-2, M-T-G, T-M, Zhu}, and matrix models \cite{B-C-S, D-F-M-T, De, Kim, Lo-0} (see survey articles and books \cite{A-B-F, B-H, D-B1, P-R, St, VZ, Wi1}). Before we move on further, we introduce some notations:
\begin{align}
\begin{aligned} \label{Not}
& z = (z^1, \cdots,z^{d}) \in \bbc^{d}, \quad w = (w^1, \cdots,w^{d}) \in \bbc^{d}, \quad  \langle w, z \rangle := \sum_{i = 1}^{d} \overline{w^i} z^i, \\
& \|z \| := \sqrt{ \langle z, z \rangle}, \quad  \mathbb{HS}^{d-1} := \{ z \in \mathbb{C}^{d} \, | \, \|z\| = 1\}, \quad \mathcal{N} := \{ 1, 2, \cdots, N \},
\end{aligned}
\end{align}
where $\overline{z^k}$ is the complex conjugate of $z^k\in\bbc$.\newline

Consider the LHS model \cite{H-P2}:
\begin{equation}  \label{A-0}
\displaystyle  \dot{z}_j= \Omega_j z_j+\displaystyle\frac{\kappa_0}{N}\sum_{k=1}^N \big(\langle z_j, z_j\rangle z_k-\langle z_k, z_j\rangle z_j\big) +\frac{\kappa_1}{N}\sum_{k=1}^N \big(\langle z_j, z_k\rangle-\langle z_k,z_j\rangle\big)z_j,
\end{equation}
where $\kappa_{0}$ and $\kappa_{1}$ are constant coupling gains coined as  ``{\it Lohe sphere coupling gain}" and ``{\it rotational coupling gain}", respectively.  Here, $\Omega_j$ is a $d \times d$ skew-Hermitian matrix:
\[ \Omega_j^\dagger = -\Omega_j, \quad j \in \mathcal{N}, \]
where $\Omega_j^\dagger$ is the Hermitian conjugate of $\Omega_j$. \newline

In this paper, we are interested in the situation in which the number of particles is sufficiently large so that system \eqref{A-0} can be effectively approximated by the corresponding mean-field model. To be more precise, we set  a phase space $\Xi$:
\[
\Xi:=\mathbb{HS}^{d-1}\times \mathrm{Skew}_{d}\mathbb{C},\quad \mathrm{Skew}_d\bbc :=\{A\in\bbc^{d\times d}: A^\dagger=-A\}, 
\]
and  let $f = f(t, z, \Omega)$ be the one-particle distribution function of the Lohe infinite ensemble at the phase space point $(z, \Omega)$ at time $t$. Then, by the standard BBGKY hierarchy (see Appendix \ref{App-C}), the spatial-temporal dynamics of the kinetic density function $f$ is governed by the kinetic LHS model:
\begin{align}\label{A-1}
\begin{cases}
\partial_t f+\nabla_z \cdot(L[f]f)=0, \quad t > 0,\quad (z, \Omega)\in\Xi, \\
\displaystyle L[f](t,z, \Omega)=\Omega z +\int_{\Xi}[\kappa_0(z_*-\langle{z_*, z}\rangle z)+\kappa_1(\langle{z, z_*}\rangle z-\langle{z_*, z}\rangle z)]f(t,z_*, \Omega_*)d\sigma_{z_*}d\Omega_*.
\end{cases}
\end{align}

\vspace{0.2cm}

For this kinetic model, we are interested in the following two questions:
\begin{itemize}
\item
(Q1)~(Global well-posedness): under what conditions on system parameters and initial data, can we guarantee a global well-posedness of measure-valued solutions? 
\vspace{0.2cm}
\item
(Q2)~(Eemergent dynamics): under what framework, can we show collective behaviors?
\end{itemize}

The main purpose of this paper is to answer the above two questions (Q1) and (Q2). In fact, our main results can be summarized as follows.
 \newline

First, we provide an improved exponential aggregation estimate for the LHS model \eqref{A-0} under the following system parameters and initial data (see Theorem \ref{T2.3}):
\begin{equation}\label{A-1-1}
|\kappa_1| < \frac{\kappa_0}{2} \quad \mbox{and} \quad \max_{1 \leq k, l \leq N}|1-\langle z_k^0, z_l^0\rangle| < 1 - \frac{2|\kappa_1|}{\kappa_0}-\delta,
\end{equation}
for a constant $\delta\in\left(0, 1-\frac{2|\kappa_1|}{\kappa_0}\right)$, and let $Z=(z_1, \cdots, z_N)$ be a solution to \eqref{LHS-a}. Then, one has 
\[
\max_{1 \leq k, l \leq N}\|z_k(t)-z_l(t) \| \leq C(Z^0) \exp(-\kappa_0\delta t), \quad t \geq 0.
\]
Note that the condition \eqref{A-1-1} allows $\kappa_1$ to be negative which is different from earlier results in \cite{H-P1}.

Second, we provide a global well-posedness of measure-valued solutions to \eqref{A-1}. More precisely, if system parameters and initial measure $\mu_0$ satisfy 
\[
|\kappa_1| < \frac{\kappa_0}{2},\quad 0\leq\sup_{z, w \in \mathrm{supp}(\mu_0)}|1-\langle z, w\rangle| < 1 - \frac{2|\kappa_1|}{\kappa_0}-\delta,\quad  \Omega_j\equiv \Omega,
\]
for some positive constant $\delta$, then there exists a unique measure-valued solution to \eqref{A-1} in the whole time interval $[0, \infty)$ satisfying 
\[
 \lim_{N \to \infty}\sup_{0 \leq t < \infty} W_2(\mu_t, \mu^N_t)=0.
\]
where $W_2$ is the Wasserstein-2 distance (see Theorem \ref{T3.1}), and $\mu_t^N$ is an empirical measure. For measure-valued solutions $\mu$ and $\nu$ to \eqref{A-1} with the initial data $\mu_0$ and $\nu_0$, respectively, we derive a finite-time stability estimate (Corollary \ref{C3.1}):
\[ 
W_p(\mu_t, \nu_t)\leq C(T) \cdot W_p(\mu_0, \nu_0), \quad t \in [0,T).
\] 

Third, we consider the case in which natural frequency matrices are the same, say $\Omega_j = \Omega$. In this case, thanks to the solution splitting property (see Proposition \ref{P4.1}), we can set
\[ \rho (t, z) = \rho_t(z) := \pi \# f_t(z, \Omega), \]
where $\pi$ is the projection map $\pi : \mathbb{HS}^{d-1}\times \mathrm{Skew}_{d}\mathbb{C} \rightarrow \mathbb{HS}^{d-1}$. Then, $\rho$ satisfies the following continuity equation with a nonlocal flux on $\bbr_+ \times \mathbb{H}\bbs^{d-1}$: 
\begin{align}\label{A-2}
\begin{cases}
\partial_t \rho+\nabla_z \cdot(L[\rho]\rho)=0, \quad t > 0, \quad z\in\mathbb{HS}^{d-1}, \\
\displaystyle L[\rho](t,z)= \int_{\mathbb{HS}^{d-1}}[\kappa_0(z_*-\langle{z_*, z}\rangle z)+\kappa_1(\langle{z, z_*}\rangle z-\langle{z_*, z}\rangle z)]\rho(t,z_*)d\sigma_{z_*}\\
\rho(0,z)=\rho_0(z).
\end{cases}
\end{align} 

Fourth, we deal with the emergent dynamics of kinetic system \eqref{A-2}. More precisely, let $\rho$ be a smooth solution of system \eqref{A-2} and coupling strengths satisfy
\[
\kappa_0>0,\quad \kappa_0+2\kappa_1\geq0.
\]
We introduce an order parameter $R$:
\[
R(t) =\left|\int_{\mathbb{HS}^{d-1}}z\rho(t,z)d\sigma_z\right|.
\]
Note that $R$ denotes the modulus of the averaged points(centroid). By direct estimates in Proposition \ref{P4.1}, one can derive monotonicity of $R^2$ and uniform bound for the second derivatives of $R^2$.
\[ \frac{dR^2}{dt} \geq 0 \quad \mbox{and} \quad \sup_{0 \leq t < \infty} \Big| \frac{d^2 R^2}{dt^2}  \Big | < \infty. \]
Then, thanks to Barbalat's lemma \cite{Ba}, one obtains the following emergent estimate (see Theorem \ref{T4.1}):
\[
\lim_{t\to\infty} \int_{\mathbb{HS}^{d-1}}(\|J_\rho\|^2-|z\cdot J_\rho|^2)\rho(t,z)d\sigma_z=0, \quad J_\rho := \int_{\mathbb{HS}^{d-1}}z\rho(t,z)d\sigma_z.
\]
Note that for a fixed $t > 0$, 
\begin{align*}
\int_{\mathbb{HS}^{d-1}}(\|J_\rho\|^2&-|z\cdot J_\rho|^2)\rho(z)d\sigma_z=0 \\
&\Longleftrightarrow \quad (\|J_\rho\|^2-|z\cdot J_\rho|^2) \equiv 0 \quad \text{ a.e. in supp}(\rho),
\end{align*}
and meanwhile unit modulus of $z$ and Cauchy-Schwarz inequality imply
\[
| z \cdot J_\rho |^2 \leq \|z\|^2\|J_{\rho}\|^2 = \|J_{\rho}\|^2,
\]
where the equality holds if and only if $z$ and $J_\rho$ are parallel. Therefore, an emergent estimate indicates that every $z \in \text{supp}(\rho)$ tends to be parallel to $J_\rho$, which means the emergence of either bi-polar state or complete aggregation (see Definition \eqref{measure_cg}).

\vspace{0.5cm}

The rest of this paper is organized as follows. In Section \ref{sec:2}, we briefly review emergent dynamics of the LHS model and the kinetic LS model which corresponds to the real counterpart of the kinetic LHS model \eqref{A-1}. In Section \ref{sec:3}, we study a global well-posedness of measure-valued solutions to the kinetic LHS model via the uniform stability estimate with respect to initial data, and then we employ a standard particle-in-cell method together with the uniform stability estimate, we derive a global well-posedness of measure-valued solutions. In Section \ref{sec:4}, we provide emergent estimates for the continuity equation \eqref{A-2} by analyzing the temporal evolution of the order parameter. Finally, Section \ref{sec:5} is devoted to a summary of our main results and some remaining issues for future work. In the appendix, we provide proofs for Theorem \ref{T2.3}, Proposition \ref{Lp_stability}, a formal BBGKY hierarchy for the derivation of the kinetic LHS model, and proof of Lemma 4.3.

\vspace{0.5cm}

\noindent \textbf{Notation.} For complex vectors $w = (w^1, \cdots, w^d), z = (z^1, \cdots, z^d) \in\bbc^d$, we use two inner-product type operations:
\begin{equation} \label{A-3}
\langle w, z\rangle :=\sum_{i=1}^d \overline{w^i} z^i, \quad w\cdot z :=\sum_{i=1}^d\left( \mathrm{Re}w^i \mathrm{Re}z^i + \mathrm{Im}w^i  \mathrm{Im}z^i \right).
\end{equation}
Note that $\langle \cdot, \cdot \rangle$ and $~\cdot~$ are complex-valued and real-valued functions, respectively. Throughout the paper,  we also use handy notations:
\[ \max_{i,j} := \max_{1 \leq i, j \leq N},  \quad \max_{i} := \max_{1 \leq i \leq N}.  \]

\setcounter{equation}{0}
\section{Preliminaries}\label{sec:2}
In this section, we present the Lohe Hermitian sphere(LHS) model and review the kinetic Lohe sphere(LS) model and their basic properties in relationship with well-posedness and emergent dynamics. 

\subsection{The LHS model}  \label{sec:2.1}
First, we begin with the Lohe tensor(LT) model \cite{H-P1} which is the first-order aggregation model on the space of rank-$m$ complex tensors with the same size following the presentations in \cite{H-P1, H-P2}. This incorporates aforementioned low-rank aggregation models such as  the Kuramoto model, the Lohe sphere model, and the Lohe matrix model. 

For a rank-$m$ complex tensor $T$ with size $d_1\times d_2\times \cdots \times d_m$, we denote $[T]_{\alpha_1 \cdots \alpha_m}$ by the $(\alpha_1, \cdots, \alpha_m)$-th component of $T$. We also set $\bar{T}$ to be the rank-$m$ with tensor whose components are the complex conjugate of the componentess in $T$:
\[ [\bar{T}]_{\alpha_1 \cdots \alpha_m} =\overline{[T]_{\alpha_1 \cdots \alpha_m}}. \]

Let ${\mathcal T}_m(\bbc; d_1 \cdots d_m)$ be the space of all rank-$m$ complex tensors with size $d_1 \cdots d_m$,  For simplicity of presentation, we introduce several handy notation as follows: for $T \in {\mathcal T}_m(\bbc; d_1 \times \cdots\times d_m)$ and $A \in  {\mathcal T}_{2m}(\bbc; d_1 \times\cdots\times  d_m \times d_1 \times \cdots\times d_m)$, we set
\begin{align*}
\begin{aligned}
& [T]_{\alpha_{*}}:=[T]_{\alpha_{1}\alpha_{2}\cdots\alpha_{m}}, \quad [T]_{\alpha_{*0}}:=[T]_{\alpha_{10}\alpha_{20}\cdots\alpha_{m0}},  \quad  [T]_{\alpha_{*1}}:=[T]_{\alpha_{11}\alpha_{21}\cdots\alpha_{m1}}, \\
&  [T]_{\alpha_{*i_*}}:=[T]_{\alpha_{1i_1}\alpha_{2i_2}\cdots\alpha_{mi_m}}, \quad [T]_{\alpha_{*(1-i_*)}}:=[T]_{\alpha_{1(1-i_1)}\alpha_{2(1-i_2)}\cdots\alpha_{m(1-i_m)}}, \\
&  [A]_{\alpha_*\beta_*}:=[A]_{\alpha_{1}\alpha_{2}\cdots\alpha_{m}\beta_1\beta_2\cdots\beta_{m}}, \quad T_c := \frac{1}{N} \sum_{k=1}^{N} T_k.
\end{aligned}
\end{align*}
Now, we are ready to provide the LT model. Consider collections $\{T_j \}_{j=1}^{N}$ and  $\{A_j\}_{j=1}^{N}$ consisting of $N$-collection of rank-$m$ complex tensors with size $d_1 \cdots d_m$ and skew-Hermitian rank-$2m$ complex tensors with  size $(d_1 \times \cdots \times d_m) \times (d_1 \times \cdots \times d_m)$ satisfying 
\[
[\bar{A}_j]_{\alpha_{*0}\alpha_{*1}} = -[A_j]_{\alpha_{*1}\alpha_{*0}},
\]
for all $\alpha_{*0}, \alpha_{*1}$. \newline

Then, the component-wise form of LT model can be expressed as follows:
\begin{align}\label{M-1}
\begin{cases}
\dot{[T_j]}_{\alpha_{*0}} = [A_j]_{\alpha_{*0}\alpha_{*1}}[T_j]_{\alpha_{*1}}
+\displaystyle\sum_{i_* \in \{0,1\}^m} \kappa_{i_*}\left( [T_c]_{\alpha_{*i_{*}}} \bar{[T_j]}_{\alpha_{*1}}[T_j]_{\alpha_{*(1-i_*)}}
- [T_j]_{\alpha_{*i_{*}}}\bar{[T_c]}_{\alpha_{*1}}[T_j]_{\alpha_{*(1-i_*)}} \right), \\
T_j(0) = T_j^0, \quad \|T_j^0\|_{\mathrm{F}}=1, \quad j\in\mathcal{N},\quad t>0,\\
\end{cases}
\end{align}
where $\{\kappa_{i_*}\}$ is a set of nonnegative coupling strengths, and $\| \cdot \|_{\mathrm{F}}$ is the Frobenius norm defined as follows:
\[
\|T\|_{\mathrm{F}} := \left( \sum_{\alpha_1,\alpha_2,\cdots,\alpha_m} \Big| [T]_{\alpha_1\alpha_2\cdots\alpha_m} \Big|^2 \right)^{\frac{1}{2}}.
\]
Here, $A_j$ is a rank-$2m$ complex tensor with size $(d_1\times d_2\times \cdots \times d_m) \times (d_1\times d_2\times \cdots \times d_m)$ satisfying the relation:
\[
[\bar{A}_j]_{\alpha_{*0}\alpha_{*1}} = -[A_j]_{\alpha_{*1}\alpha_{*0}},
\]
for all $\alpha_{*0}, \alpha_{*1}$. \newline

 Note that the first and second terms in the R.H.S. of $\eqref{M-1}_1$ represent a free flow and cubic aggregation couplings, respectively. Despite its structural complexity of \eqref{M-1}, system \eqref{M-1} can exhibit emergent collective behaviors under suitable frameworks (see \cite{H-P1}). Among others, one of our main purpose of this paper is to derive a sufficient framework for complete aggregation of the LT model:
\begin{align}\label{M-2}
\lim_{t \to \infty} \max_{i,j} \| T_i(t) - T_j(t) \|_{\mathrm{F}} = 0,
\end{align}
under the special situation:
\[ T_j = z_j \in {\mathcal T}_1(\bbc; d) = \bbc^d, \quad \mbox{and} \quad A_j = \Omega_j \in {\mathcal T}_2(\bbc; d \times d) = \bbc^{d \times d}. \] 
In this setting, system \eqref{M-1} reduces to the \emph{Lohe  Hermitian sphere}(LHS) model:
\begin{equation}\label{M-3}
\begin{cases}
\dot{z}_j=\Omega_j z_j+\kappa_0 \Big (\langle{z_j, z_j}\rangle z_c-\langle{z_c, z_j}\rangle z_j \Big )+\kappa_1 \Big (\langle{z_j, z_c}\rangle-\langle{z_c, z_j}\rangle \Big )z_j, \quad t > 0, \vspace{0.2cm}\\
z_j(0)=z_j^0, \quad  \|z^0_j \| = 1, \quad j \in {\mathcal N},
\end{cases}
\end{equation}
where { {$z_c :=\frac{1}{N}\sum_{k=1}^Nz_k$}}, and $\Omega_j$ is a skew-Hermitian $d \times d$ matrix.  It is shown in \cite{H-P2} that system \eqref{M-3} preserves the modulus of $z_j$:
\[ \|z_j(t) \| = 1, \quad  t > 0, \quad j \in {\mathcal N}. \]
Next, we present results from \cite{H-P2} on complete aggregation for the LHS model without proofs for the comparison with result to be presented in later sections.
\begin{theorem} 
\emph{(Complete aggregation) \cite{H-P2}} 
Suppose system parameters and initial data satisfy
\[
0<\kappa_1<\frac{1}{4}\kappa_0, \quad \|z_c(0)\|>\frac{N-2}{N}, \quad \Omega_j = \Omega, \quad j \in {\mathcal N},
\]
and let $\{z_j\}$ be a global solution to \eqref{M-3}. Then complete aggregation emerges asymptotically:
\[  \displaystyle \lim_{t \to \infty} \max_{i,j}\|z_i(t)-z_j(t) \| = 0. \]
\end{theorem}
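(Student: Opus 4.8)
The plan is to eliminate the common free flow by a unitary gauge transformation, reduce the claim to the convergence of a single scalar order parameter, and then run a monotonicity-plus-bootstrap argument on that parameter.

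First I would remove the drift. Since every $\Omega_j$ equals a fixed skew-Hermitian $\Omega$, the matrix $e^{\Omega t}$ is unitary, so setting $w_j := e^{-\Omega t} z_j$ preserves all inner products $\langle z_k, z_l\rangle = \langle w_k, w_l\rangle$ and all pairwise distances $\|z_k - z_l\| = \|w_k - w_l\|$. Substituting into \eqref{M-3} cancels the drift term and shows that $\{w_j\}$ solves the same LHS system with $\Omega \equiv 0$; since $\max_{i,j}\|z_i-z_j\|$ is unchanged, it suffices to treat $\Omega = 0$. Next I would pass to the order parameter $\rho := \|z_c\|$. Using $\|z_j\|=1$ one has the identity
\[ \frac{1}{N^2}\sum_{k,l=1}^N \|z_k - z_l\|^2 = 2\big(1 - \rho^2\big), \]
so complete aggregation is equivalent to $\rho(t)\to 1$. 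Writing $a_j := \langle z_c, z_j\rangle = \alpha_j + i\beta_j$ and noting $\sum_j a_j = N\rho^2$, differentiating $\rho^2=\|z_c\|^2$ along \eqref{M-3} with $\Omega=0$ and separating real and imaginary parts (the rotational $\kappa_1$-term enters only through $\mathrm{Im}\,a_j=\beta_j$) yields the sign-definite production
\[ \frac{1}{2}\frac{d\rho^2}{dt} = \frac{\kappa_0}{N}\sum_{j=1}^N \alpha_j\big(1 - \alpha_j\big) + \frac{\kappa_0 + 2\kappa_1}{N}\sum_{j=1}^N \beta_j^2. \]
Since $\sum_j \alpha_j^2 \le \sum_j |a_j|^2 \le N\rho^2 = \sum_j \alpha_j$, the first sum is nonnegative, and $\kappa_0 + 2\kappa_1 > 0$ under $0<\kappa_1<\kappa_0/4$ makes the second nonnegative; hence $\rho$ is nondecreasing and converges to some $\rho_\infty\in\big(\tfrac{N-2}{N},1\big]$.

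The hard part is upgrading this monotonicity to $\rho_\infty=1$, and here the threshold $\|z_c(0)\|>\frac{N-2}{N}$ is decisive. That value is precisely the centroid modulus of the bipolar configuration in which $N-1$ states coincide and one is antipodal — a spurious zero of the production above — so the role of the initial-data condition is to start strictly inside the basin that excludes it. Concretely I would run a bootstrap/continuity argument showing that $\rho(t)>\frac{N-2}{N}$ is propagated (automatic from monotonicity) and that it forces each alignment $\alpha_j=\mathrm{Re}\langle z_c, z_j\rangle$, equivalently each spread $\sum_k\|z_k-z_j\|^2 = 2N(1-\alpha_j)$, to remain in a range where the production cannot vanish unless $\rho=1$. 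The upper bound $\kappa_1<\kappa_0/4$ is needed exactly in this estimate, to dominate the rotational ($\kappa_1$) contribution by the attractive ($\kappa_0$) one when controlling the least-aligned particle $\min_j\alpha_j$. Granting a uniform lower bound $\alpha_j\ge\alpha_\ast>0$, the first sum is bounded below by $\kappa_0\alpha_\ast(1-\rho^2)$, giving the closed inequality $\frac{d}{dt}(1-\rho^2)\le -2\kappa_0\alpha_\ast(1-\rho^2)$ and thus exponential convergence $\rho\to 1$; alternatively, as $\rho^2$ is monotone and bounded with uniformly bounded second derivative (all vector fields are bounded on $\mathbb{HS}^{d-1}$), Barbalat's lemma forces the production to vanish in the limit and the threshold excludes every limiting configuration with $\rho_\infty<1$. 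Either route, combined with the identity of the second step, yields $\max_{i,j}\|z_i-z_j\|\to 0$. I expect the genuinely delicate point to be the propagation of this a priori alignment bound together with the correct exploitation of $\kappa_1<\kappa_0/4$; the monotonicity and the final conversion are routine.
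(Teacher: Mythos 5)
The paper does not actually prove this theorem; it quotes it from \cite{H-P2} (Theorem 4.1 there), so there is no in-paper argument to compare against, and your proposal must be judged on its own. Your reduction steps are sound and consistent with machinery the paper develops elsewhere: the gauge transformation $w_j=e^{-\Omega t}z_j$, the identity $\frac{1}{N^2}\sum_{k,l}\|z_k-z_l\|^2=2(1-\|z_c\|^2)$, and your production formula agrees (after using $\sum_j\alpha_j=N\rho^2$) with the identity recorded in the remark following Lemma \ref{L4.4},
\[
\frac{d}{dt}\|z_c\|^2=\frac{2\kappa_0}{N}\sum_{i=1}^N\big(\|z_c\|^2-|\mathrm{Re}\langle z_i, z_c\rangle|^2\big)+\frac{2(\kappa_0+2\kappa_1)}{N}\sum_{i=1}^N|\mathrm{Im}\langle z_i, z_c\rangle|^2.
\]
So monotonicity of $\rho=\|z_c\|$ and the equivalence of complete aggregation with $\rho\to1$ are fine.

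The gap is exactly where you flag it, and your first proposed route cannot even start. The hypothesis $\|z_c(0)\|>\frac{N-2}{N}$ does \emph{not} imply $\alpha_j(0)=\mathrm{Re}\langle z_c,z_j\rangle(0)>0$ for all $j$: take $N=3$, $z_2=z_3=e_1$ and $z_1=-0.6\,e_1+0.8\,e_2$; then $\|z_c\|^2=\frac{1}{9}\big(5+4(-0.6)\big)=\frac{2.6}{9}>\frac19$, so the hypothesis holds, yet $\alpha_1=\frac{1}{3}\big(1+2(-0.6)\big)<0$. Hence the a priori bound $\alpha_j\ge\alpha_*>0$ on which your exponential-decay route rests is false at $t=0$ in general, and you give no mechanism by which it becomes true later. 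Your second route (Barbalat) is the one that closes, but its decisive step is only asserted. What is needed is the classification of the zeros of the production: Barbalat gives $\sum_j(\rho^2-\alpha_j^2)\to0$ and $\sum_j\beta_j^2\to0$ termwise; since $\rho_\infty\ge\rho(0)>0$, each $|\alpha_j|\to\rho_\infty$ is eventually bounded away from zero, so $\alpha_j$ cannot change sign and converges to $+\rho_\infty$ or $-\rho_\infty$; if $m$ particles take the minus sign, then $N\rho_\infty^2=\lim_t\sum_j\alpha_j=(N-2m)\rho_\infty$, i.e. $\rho_\infty=\frac{N-2m}{N}$, and $\rho_\infty\ge\rho(0)>\frac{N-2}{N}$ forces $m=0$, $\rho_\infty=1$. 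Supplying this argument would complete the proof; note that it uses only $\kappa_0>0$ and $\kappa_0+2\kappa_1>0$, so your claim that $\kappa_1<\kappa_0/4$ is ``needed exactly'' to dominate the rotational term is never substantiated anywhere in your sketch.
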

\begin{proof} Detailed proof can be found in Theorem 4.1 of \cite{H-P2}.
\end{proof}

Next, we present an exponential aggregation estimate for the LHS model with the same free flow $\Omega_j = \Omega$ with a relaxed sign condition on $\kappa_1$. For this, we define functionals $\mathcal{F}$ and ${\mathcal G}$ as follows:
\[
\mathcal{F}:= \max_{k,l}\left| 1 - \langle z_k, z_l \rangle \right|, \quad  \mathcal{G}:=\max_{k, l}\|z_k-z_l\|.
\]
It follows from the following inequality:
\[
\|z_i-z_j\|^2=2\mathrm{Re}(1-\langle z_i, z_j\rangle)\leq 2|1-\langle z_i, z_j\rangle|
\]
that 
\begin{equation} \label{M-3-1}
\mathcal{G} \leq 2\sqrt{\mathcal{F}}.
\end{equation}
For complete aggregation, it suffices to show the exponential decay of ${\mathcal F}$. Note that for a homogeneous ensemble, without loss of generality, we may assume $\Omega = 0$ (thanks to the solution splitting property) so that $z_i$ satisfies 
\begin{align}\label{LHS-a}
\dot{z}_i = \frac{\kappa_0}{N}\sum_{k=1}^N (z_k-\langle z_k, z_i \rangle z_i)+ \frac{\kappa_1}{N}\sum_{k=1}^N (\langle z_i, z_k \rangle - \langle z_k, z_i \rangle)z_i.
\end{align}
\begin{theorem}[Exponential aggregation]\label{T2.3}
Suppose system parameters and initial data satisfy
\[
|\kappa_1| < \frac{\kappa_0}{2},\quad 0\leq\max_{k, l}|1-\langle z_k^0, z_l^0\rangle| < 1 - \frac{2|\kappa_1|}{\kappa_0}-\delta,
\]
for some positive constant $\delta$, and let $Z=(z_1(t), \cdots, z_N(t))$ be a solution to \eqref{LHS-a}. Then, one has 
\begin{equation} \label{M-3-2}
\mathcal{F}(t)\leq \mathcal{F}^0\exp\left(-2\kappa_0\delta t\right), \quad \mathcal{G}(t) \leq 2\sqrt{\mathcal{F}^0}\exp(-\kappa_0\delta t).
\end{equation}
\end{theorem}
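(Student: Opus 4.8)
The plan is to reduce the vector dynamics to a scalar comparison principle for the correlations $h_{kl} := 1-\langle z_k, z_l\rangle$ and to show that $\mathcal{F}=\max_{k,l}|h_{kl}|$ satisfies a logistic-type differential inequality whose linearization, under the smallness of $\mathcal{F}^0$, forces exponential decay. First, I would differentiate $\langle z_k,z_l\rangle$ along \eqref{LHS-a}, using $\|z_j\|\equiv1$, the conjugation identity $\overline{\langle z_m,z_k\rangle}=\langle z_k,z_m\rangle$, and the averages $\frac1N\sum_m\langle z_m,z_l\rangle=\langle z_c,z_l\rangle$ and $\frac1N\sum_m\langle z_k,z_m\rangle=\langle z_k,z_c\rangle$. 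After collecting terms I expect the $\kappa_0$ part to collapse to a single scalar multiple of $h_{kl}$ and the $\kappa_1$ part to factor through a quantity $b_{kl}$, giving
\[
\dot h_{kl}=-\kappa_0\,a_{kl}\,h_{kl}-\kappa_1\,b_{kl}(1-h_{kl}),
\]
where $a_{kl}:=\langle z_c,z_l\rangle+\langle z_k,z_c\rangle$ and $b_{kl}:=(\langle z_c,z_k\rangle-\langle z_k,z_c\rangle)+(\langle z_l,z_c\rangle-\langle z_c,z_l\rangle)$. The structural fact I would then isolate is that each bracket in $b_{kl}$ has the form $\overline w-w$, so $b_{kl}$ is purely imaginary; this will make the $\kappa_1$ self-interaction with $h_{kl}$ disappear from the real part below, and it is exactly what allows $\kappa_1$ to be negative.

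Next I would estimate the two coefficients in terms of $\mathcal{F}$. Writing $\langle z_k,z_c\rangle=1-\frac1N\sum_m h_{km}$ and likewise for the others, together with the identity $\mathrm{Re}(h_{ml})=\tfrac12\|z_m-z_l\|^2\in[0,\mathcal{F}]$, I expect the lower bound $\mathrm{Re}(a_{kl})\geq 2(1-\mathcal{F})$. For the rotational term I would expand $b_{kl}=\frac{2i}{N}\sum_m(\mathrm{Im}(h_{km})+\mathrm{Im}(h_{ml}))$ and bound each imaginary part by $\mathcal{F}$ to get $|b_{kl}|\leq 4\mathcal{F}$. Differentiating $\mathcal{F}$ via its upper Dini derivative at a maximizing pair $(k^\ast,l^\ast)$ (where $|h_{k^\ast l^\ast}|=\mathcal{F}>0$, the case $\mathcal{F}=0$ being trivial) and using $\frac{d}{dt}|h|=\mathrm{Re}\big(\frac{\overline h}{|h|}\dot h\big)$ together with the purely imaginary character of $b_{kl}$ to kill the $h_{kl}$-self part of the $\kappa_1$ contribution, I arrive at
\[
D^+\mathcal{F}\leq -\kappa_0\,\mathrm{Re}(a_{k^\ast l^\ast})\,\mathcal{F}+|\kappa_1|\,|b_{k^\ast l^\ast}|\leq -2\kappa_0\,\mathcal{F}\Big(1-\tfrac{2|\kappa_1|}{\kappa_0}-\mathcal{F}\Big).
\]

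Finally I would run a continuity/bootstrap argument. Since $\mathcal{F}^0<1-\frac{2|\kappa_1|}{\kappa_0}-\delta$, as long as $\mathcal{F}(t)\leq\mathcal{F}^0$ the bracket above exceeds $\delta$, whence $D^+\mathcal{F}\leq-2\kappa_0\delta\,\mathcal{F}$; Gronwall's inequality then yields $\mathcal{F}(t)\leq\mathcal{F}^0\exp(-2\kappa_0\delta t)$, which in turn shows that $\mathcal{F}$ stays below $\mathcal{F}^0$ and closes the bootstrap on the whole half-line. The bound on $\mathcal{G}$ is then immediate from \eqref{M-3-1}, giving $\mathcal{G}(t)\leq2\sqrt{\mathcal{F}^0}\exp(-\kappa_0\delta t)$. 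I expect the main obstacle to be, beyond the bookkeeping in the first step, the control of the sign-indefinite $\kappa_1$ term: the whole estimate hinges on recognizing that the destabilizing first-order (in $\mathcal{F}$) part of the rotational coupling is purely imaginary and hence harmless to $\frac{d}{dt}|h_{kl}|$, leaving only a genuinely $O(|\kappa_1|\mathcal{F})$ remainder that competes with the $\kappa_0$ restoring term and produces the sharp threshold $2|\kappa_1|/\kappa_0$.
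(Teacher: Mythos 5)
Your proposal is correct and follows essentially the same route as the paper's Appendix~\ref{App-A}: both derive the closed evolution of the two-point correlations, observe that the potentially destabilizing $\kappa_1$ self-interaction is purely imaginary (the paper phrases this by splitting into $J_{ij}=1-R_{ij}$ and $I_{ij}$ and computing $\tfrac{d}{dt}(I_{ij}^2+J_{ij}^2)$, which is exactly $\tfrac{d}{dt}|h_{ij}|^2$ in your notation), and arrive at the same logistic inequality $D^+\mathcal{F}\leq-2\kappa_0\mathcal{F}\bigl(1-\tfrac{2|\kappa_1|}{\kappa_0}-\mathcal{F}\bigr)$ closed by the same bootstrap and Gronwall argument. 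Your packaging via the complex quantity $h_{kl}=1-\langle z_k,z_l\rangle$ and its modulus is a mild streamlining of the paper's real/imaginary bookkeeping, but the key identities, bounds, and threshold are identical.
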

\begin{proof}
Since the proof is rather lengthy, we leave it to Appendix \ref{App-A}, whereas the second estimate in \eqref{M-3-2} follows from the first estimate and \eqref{M-3-1} directly. 
\end{proof}
% {The emergent dynamics of \eqref{M-5-1} has been extensively studied from diverse aspects, e.g, complete synchronization under attractive couplings \cite{C-C-H,C-H1, J-C, Lo-1, Lo-2, M-T-G, Zhu}, interplay between attractive and repulsive couplings \cite{C-H5}, time-delay \cite{C-H2,C-H3}}.  {Here ``{\it complete synchronization}" means that all relative states $z_i - z_j,~i \not = j$ tend to zero asymptotically (see Definition \ref{D2.1}).} \newline
\begin{remark} Note that the framework in this theorem allows the coupling strength $\kappa_1$ to be negative (i.e. repulsive coupling). 
\end{remark}

\subsection{The LS model} \label{sec:2.2}
Consider the LS model on $\bbr^d$ for rank-1 real tensors which corresponds to the real counterpart of the LHS model \eqref{M-3} for $z_j = x_j \in \bbr^d.$ In this case, one has
\[ \langle{z_j, z_c}\rangle-\langle{z_c, z_j}\rangle  = 0. \]
Thus, system \eqref{M-3} can be reduced to 
\begin{equation}\label{M-4}
\begin{cases}
\displaystyle \dot{x}_j=\Omega_j x_j+\kappa_0 \big(\langle{x_j, x_j}\rangle x_c-\langle{x_c, x_j}\rangle x_j \big), \quad t >0, \vspace{0.2cm}\\
\displaystyle  x_i(0)=x_i^0\in \mathbb{S}^{d-1}, \quad i \in \mathcal{N}.
\end{cases}
\end{equation}
Note that once the initial data $x^0_i$ lie in $\bbs^{d-1}$, one can see that system \eqref{M-4} conserves the unit modulus. Thus, system \eqref{M-4} can be regarded as an aggregation model on 
the unit sphere $\bbs^{d-1}$. \newline

\noindent For a homogeneous ensemble with $\Omega_k=\Omega$ for all $k \in {\mathcal N}$,  system \eqref{M-4} enjoys the solution splitting property(see Lemma 2.2 and Proposition 2.3 of 
\cite{H-P1}, respectively). Without loss of generality, we can assume $\Omega = 0$ and system \eqref{M-4} can be rewritten as 
\begin{align}\label{M-4.5}
\dot{x}_j=\kappa_0 \big(x_c-\langle{x_c, x_j}\rangle x_j \big)=\kappa_0 \mathbb{P}_{x_j^{\perp}}x_c,
\end{align}
where $\mathbb{P}_{x^{\perp}}$ is a projection on the orthogonal complement of a unit vector $x$ given as follows:
\[
\mathbb{P}_{x^{\perp}}u = u-\langle u, x \rangle x, \quad u \in \bbr^d.
\]
Since $\kappa_0$ is linear in \eqref{M-4.5}, we may set $\kappa_0=1$ by time scaling if necessary. Thus, $x_j$ satisfies 
\begin{align}\label{M-5}
\dot{x}_j=\mathbb{P}_{x_j^{\perp}}x_c.
\end{align}
Since the R.H.S. of \eqref{M-5} belongs to $T_{x_j} \bbs^{d-1}$,  system \eqref{M-5} can be regarded as a \emph{particle} model on $\bbs^{d-1}$, as its governing equation describes the interaction between particles on the unit sphere. Let $\rho(t,\cdot) $ be a probability measure on $\bbs^{d-1}$. Then, by the BBGKY hierarchy for the particle model \eqref{M-5}, one can derive the \emph{continuum aggregation model}:
\begin{align}\label{M-6}
\partial_t \rho + \nabla_x \cdot (\rho\mathbb{P}_{x^{\perp}}J_\rho )=0, \quad \text{with} \quad J_\rho =\int_{\mathbb{S}^{d-1}}x\rho \mathrm{d}\sigma_x,
\end{align}
where $\nabla_x$ and $\mathrm{d}\sigma_x$ are divergence operator and standard surface measure on $\mathbb{S}^{d-1}$, respectively. Here, $\rho(t,\cdot)$ can be understood as a \emph{measure-valued} extension of $\mathcal{X}(t)=\{x_i(t)\}$ in the sense that 
\begin{enumerate}
\item For a dirac mass $\delta_x$ concentrated at $x$, we have
\begin{align*}
\begin{aligned}
& \mathcal{X}(t)=\{x_i(t)\} \text{ is a solution of } \eqref{M-5} \\
& \Longleftrightarrow\quad \mbox{the empirical mesure}~\rho(t):=\frac{1}{N}\sum_{i=1}^N \delta_{x_i(t)} \text{ is a measure-valued solution of } \eqref{M-6}.
\end{aligned}
\end{align*}

\vspace{0.2cm}

\item If $\rho$ is a probability density function of $x$, then $J_\rho =\int_{\mathbb{S}^{d-1}}x\rho \mathrm{d}\sigma_x$ can be regarded as the expected value $\mathbb{E}[x;\mathbb{S}^{d-1}]$. In this sense, $J_\rho$ is a generalized concept of a centroid $x_c$ for the particle model \eqref{M-5}.
\end{enumerate}
Now we present an analogue of \eqref{M-2}. We first provide definitions of Wasserstein spaces and distances. 

\begin{definition}
Let $(\Xi,\| \cdot \|)$ be a normed space, and $p \in [1, \infty)$. 
\begin{enumerate}
\item The Wasserstein space of order $p$ on $\Xi$ is defined as a collection of probability measures with a finite $p$-th moment:
\[
\mathcal{P}_p(\Xi):=\left\{ \mu\in\mathcal{P}(\Xi): \langle{\mu, \|y\|^p}\rangle=\int_{\Xi}\|y\|^p\mu(dy)<\infty \right\}.
\]
\vspace{0.2cm}

\item Let $\mu$ and $\nu$ in $\mathcal{P}_p(\Xi)$ be two measures. Then, the Wasserstein metric $W_p$ of order $p$ between $\mu$ and $\nu$ is given by
\[
W_p(\mu, \nu):=\left(\inf_{\gamma\in\Pi(\mu, \nu)}\int_{\Xi\times\Xi}\|y-\tilde{y}\|^p\gamma(dyd\tilde{y})\right)^{\frac{1}{p}},
\]
where $\Pi(\mu, \nu)$ is the collection of probability measures on $\Xi\times\Xi$ with marginals $\mu$ and $\nu$. Such $\gamma\in\Pi(\mu, \nu)$ are called the transport plans, and those achieving the infimum, if any, are called the optimal transport plans.
\end{enumerate}
\end{definition}
\begin{remark}\label{Wp_remark}
If $(X,d)$ is a Polish space and $p \in [1, \infty)$,  $W_p$-metric metrizes the weak convergence in $\mathcal{P}_p(X)$(See Theorem 6.9 of \cite{Vi}). In other words, if $(\mu_n)$ is a sequence of measures in $\mathcal{P}_p(X)$, then for $\mu \in \mathcal{P}_p(X)$,
\[
\mu_k \text{ converges weakly to } \mu \text{ in } \mathcal{P}_p(X) \text{ as } k \to \infty  \quad \Longleftrightarrow \quad W_p(\mu_k, \mu) \rightarrow 0 \text{ as } k \to \infty,
\]
and this justifies to equip $W_p$ distance on $\mathcal{P}_p(\Xi)$. Furthermore, this directly implies the continuity of $W_p(X)$; if $(\mu_n)$ and $(\nu_n)$ converge weakly to $\mu$ and $\nu$ in $\mathcal{P}_p(X)$, respectively,
\[
W_p(\mu_k,\nu_k) \rightarrow W_p(\mu,\nu).
\]
\end{remark}

% 내용을 수정함에 따라 아래 remark가 더이상 필요하지 않게 되어 주석처리함.
%\begin{remark}\label{Wp_remark2}
%Wasserstein distance can be controlled by weighted total variation. That is, for probability measures $\mu$ and $\nu$ on a Polish space $(X,d)$ and $p \in [1, \infty)$,
%\begin{align*}
%W_p(\mu,\nu) \leq 2^{\frac{p-1}{p}}\left(\int_X d(x_0,x)^p d|\mu-\nu|(x) \right)^{\frac{1}{p}},
%\end{align*}
%where $x_0 \in X$. For the details, refer to Therem 6.15 of \cite{OAN}.
%\end{remark}

As an analogue of \eqref{M-2}, one can verify the following result.

\begin{theorem}\label{T2.4}
\emph{\cite{F-L}}
Let $\rho_0$ be a probability measure on $\mathbb{S}^{d-1} \subset \bbr^d$, and let $\rho \in C\left(\bbr_+, \mathcal{P}(\bbs^{d-1})\right)$ be a solution of \eqref{M-6} with initial condition $\rho(0,x)=\rho_0(x)$. Suppose that $J_\rho (0) \neq 0$, then the following assertions hold.
\begin{enumerate}
\item The mapping $t \mapsto | J_\rho (t) |$ is nondecreasing, so that $y(t):=\frac{J_\rho (t)}{|J_\rho (t)|} \in \bbs^{d-1}$ is well defined, and there exists $y(t) \in \bbs^{d-1}$ and $y_\infty\in\bbs^{d-1}$ such that
\[
\lim_{t \to \infty} y(t) = y_{\infty}.
\]
\item There exists a unique $w \in \bbs^{d-1}$ such that the solution of 
\begin{align*}
\begin{cases}
\dot{x}=\mathbb{P}_{x^{\perp}}J_\rho (t)\\
x(0)=w
\end{cases}
\end{align*}
satisfies $\lim_{t \to \infty}x(t)=-y_{\infty}$. Furthermore, if $m$ is the mass of $\{w\}$ with respect to measure $\rho_0$, then
\[
0\leq m < \frac{1}{2} \quad \text{and} \quad \rho(t,\cdot) \to (1-m)\delta_{y_{\infty}}+m\delta_{-y_{\infty}} \text{ weakly as } t \to \infty. 
\] 
\end{enumerate}
\end{theorem}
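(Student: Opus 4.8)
The plan is to control the order parameter $J_\rho(t)=\int_{\mathbb{S}^{d-1}}x\,\rho(t,x)\,\mathrm{d}\sigma_x$ first and then transfer its asymptotics to the whole measure through the characteristic flow. I would begin by differentiating $J_\rho$ along \eqref{M-6}: testing the continuity equation against the coordinate functions $x_i$ and using that the tangential gradient of $x_i$ is $\mathbb{P}_{x^{\perp}}e_i$, one obtains the closed relation
\[
\dot{J}_\rho=(I-Q)J_\rho,\qquad Q(t):=\int_{\mathbb{S}^{d-1}}x\otimes x\,\rho(t,x)\,\mathrm{d}\sigma_x,
\]
where $Q$ is symmetric with $0\preceq Q\preceq I$ and $\mathrm{tr}\,Q=1$. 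Hence $\frac{d}{dt}|J_\rho|^2=2\langle J_\rho,(I-Q)J_\rho\rangle\ge 0$, so $|J_\rho(t)|$ is nondecreasing; since $J_\rho(0)\neq 0$ and $|J_\rho|\le 1$, it increases to a limit $\ell\in(0,1]$ and $y(t)=J_\rho(t)/|J_\rho(t)|$ is well defined for all $t\ge 0$. This already gives the monotonicity assertion in (1).

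For the existence of $y_\infty$, I would integrate the dissipation: monotone convergence of $|J_\rho|^2$ yields $\int_0^\infty\langle J_\rho,(I-Q)J_\rho\rangle\,\mathrm{d}t<\infty$, and boundedness of $\frac{d^2}{dt^2}|J_\rho|^2$ (the field $\mathbb{P}_{x^{\perp}}J_\rho$, and hence $\dot Q$ and $\dot J_\rho$, are bounded) lets Barbalat's lemma force $\langle y,(I-Q)y\rangle\to 0$. Since $\mathrm{tr}\,Q=1$, this means every eigenvalue of $Q$ transverse to $y$ tends to $0$, i.e. $\rho(t,\cdot)$ concentrates on the axis $\{\pm y(t)\}$. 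Using $\dot y=-\mathbb{P}_{y^{\perp}}Qy$ together with $Q^2\preceq Q$, one checks $|\dot y|^2\le 1-\langle y,Qy\rangle$, which is integrable; upgrading this to $\int_0^\infty|\dot y|\,\mathrm{d}t<\infty$ (either via an exponential alignment rate once $Q$ is near rank one, or via a LaSalle argument on the weak-$*$ compact set $\mathcal{P}(\mathbb{S}^{d-1})$ identifying all $\omega$-limits as axial steady states sharing a single axis) yields $y(t)\to y_\infty$ and completes (1).

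To obtain the precise limit measure in (2), I would analyze the characteristic ODE $\dot x=\mathbb{P}_{x^{\perp}}J_\rho(t)$ and write $\rho(t,\cdot)=(\Phi_t)_\#\rho_0$ for its smooth, invertible flow $\Phi_t$. The key device is a light-cone linearization: setting $x=\vec\xi/\xi_0$ for $\xi=(\xi_0,\vec\xi)\in\bbr^{1+d}$ turns the nonlinear sphere flow into the linear nonautonomous system
\[
\dot\xi=L(t)\xi,\qquad L(t)=\begin{pmatrix}0 & J_\rho(t)^{\top}\\ J_\rho(t) & 0\end{pmatrix},
\]
and the Minkowski form $\xi_0^2-|\vec\xi|^2$ is conserved, so the null cone $\{\xi_0=|\vec\xi|\}$ (whose rays are exactly $\mathbb{S}^{d-1}$) is invariant. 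Since $L(t)\to L_\infty$, where $L_\infty$ has the simple top eigenvalue $\ell>0$ along $(1,y_\infty)$ and all other eigenvalues $\le 0$, and since the only null ray inside the non-growing subspace is $(1,-y_\infty)$, the theory of asymptotically autonomous linear systems (exponential dichotomy and its roughness) pins the non-growing null ray down to a single point $w$ with $\Phi_t(w)\to -y_\infty$, while every $x_0\neq w$ has $\Phi_t(x_0)\to y_\infty$. Pushing $\rho_0$ forward and passing to the limit against bounded test functions then gives $\rho(t,\cdot)\to(1-m)\delta_{y_\infty}+m\delta_{-y_\infty}$ with $m=\rho_0(\{w\})$.

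Finally, the bound $m<1/2$ is forced by consistency of the two limits: weak convergence gives $J_\rho(t)\to(1-m)y_\infty+m(-y_\infty)=(1-2m)y_\infty$, while $|J_\rho(t)|\to\ell>0$ and $y(t)\to y_\infty$, so $1-2m=\ell>0$. I expect the main obstacle to lie in the two nonautonomous points: establishing convergence of the axis $y(t)$ (where square-integrability of $\dot y$ must be upgraded to integrability, or replaced by a LaSalle/invariance argument), and making the dichotomy in the light-cone system rigorous enough to pin the exceptional set down to a single point $w$ carrying precisely the mass appearing in the limit.
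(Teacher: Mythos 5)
First, a point of comparison: the paper does not actually prove Theorem \ref{T2.4} --- it is quoted from Frouvelle--Liu \cite{F-L} and the proof is deferred wholesale to Theorem 1 of that reference. So your outline is not competing with an in-paper argument but reconstructing the cited one, and its two main ingredients are the right ones. The closed relation $\dot{J}_\rho=(I-Q)J_\rho$ with $Q=\int x\otimes x\,\rho\,d\sigma_x$ is exactly the correct computation and immediately gives the monotonicity of $|J_\rho|$; the observation that $\mathbb{P}_{x^{\perp}}J_\rho(t)$ generates conformal (M\"obius) transformations of $\mathbb{S}^{d-1}$, which your light-cone lift $\dot{\xi}=L(t)\xi$ realizes as a Lorentz flow preserving $\xi_0^2-|\vec{\xi}|^2$, is the same structural device exploited in \cite{F-L}; and the closing consistency argument $(1-2m)=\lim_{t\to\infty}|J_\rho(t)|>0$, hence $m<\tfrac12$, is correct.

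Two steps are genuinely open, and they are the load-bearing ones. (a) Convergence of the axis: you only obtain $\int_0^\infty|\dot{y}|^2\,dt<\infty$, which does not imply that $y(t)$ converges, and neither proposed upgrade is carried out. The ``exponential alignment rate once $Q$ is near rank one'' is not available from what you have: $\langle y,Qy\rangle\to 1$ controls the spread of $\rho$ about the axis $\{\pm y(t)\}$ but gives no rate for the drift of the axis itself. This gap is not cosmetic, because your entire part-(2) argument assumes $L(t)\to L_\infty$, i.e.\ presupposes $y(t)\to y_\infty$. (b) Even granting $J(t)\to\ell y_\infty$, the assertion that the non-growing codimension-one subspace $V$ meets the null cone in exactly one ray is unproved: a hyperplane through the origin in Minkowski space meets the cone in a single ray only if it is a \emph{null} hyperplane, whereas a timelike $V$ would meet it in a $(d-2)$-sphere of rays and a spacelike $V$ in none. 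Constancy of the signature of the Minkowski form along the flow does not settle this, since a family of timelike hyperplanes can degenerate to a null one in the limit; one must additionally analyze the induced dynamics on the interior of the cone (the open unit ball $u=\vec{\xi}/\xi_0$, $\dot{u}=J_\rho-(J_\rho\cdot u)u$) to show that every timelike direction is swept into the dominant ray $(1,y_\infty)$, which is essentially the argument of \cite{F-L}. As written, the proposal is a correct road map whose two hard nonautonomous steps --- precisely the ones you flag at the end --- remain to be proved.
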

\begin{proof}
For a proof, we refer to Theorem 1 in \cite{F-L}.
\end{proof}
\begin{remark}
Note that Theorem \ref{T2.4} is analogous to \eqref{M-2} in the following sense: if $\rho^0$ has no atoms(i.e. $m = 0$) and satisfies $J_{\rho_0} \neq 0$, then the measure $\rho$ converges weakly to a Dirac mass concentrated at $y_{\infty}$.
\end{remark}

\section{A global well-posedness} \label{sec:3}
\setcounter{equation}{0}
In this section, we study a global well-posedness of measure-valued solutions to \eqref{A-1} on $\mathbb{HS}^{d-1}$. Recall the Cauchy problem to kinetic LHS model becomes 
\begin{align}\label{C-1}
\begin{cases}
\partial_t f+\nabla_z \cdot(L[f]f)=0, \quad t>0, \quad (z, \Omega)\in\Xi, \\
\displaystyle L[f](z, \Omega)=\Omega z
 +\int_{\Xi}[\kappa_0(z_*-\langle{z_*, z}\rangle z)+\kappa_1(\langle{z, z_*}\rangle z-\langle{z_*, z}\rangle z)]f(t, z_*, \Omega_*)d\sigma_{z_*}d\Omega_*, \\
f(0,z, \Omega)=f_0(z, \Omega),
\end{cases}
\end{align}
where we use the notation $\nabla_z$ as a covariant derivative on $\mathbb{HS}^{d-1}$ with respect to $z$.

\subsection{Preparatory lemmas} \label{sec:3.1}
First, we study a canonical identification $\iota: \bbc^d\to \bbr^{2d}$ defined by
\[
z = (z^1,\cdots, z^d)\mapsto (\mathrm{Re}~z^1, \mathrm{Im}~z^1, \cdots, \mathrm{Re}~z^d, \mathrm{Im}~z^d).
\]
Then the restriction $\iota \big|_{\mathbb{HS}^{d-1}}$ is an one-to-one map between $\mathbb{HS}^{d-1}(\subseteq \bbc^d)$ and $\mathbb{S}^{2d-1}$.  First, we introduce an inner product on $\bbc^d$ as follows.
\begin{definition} \label{D3.1}
Let $z = (z^1, \cdots, z^d)$ and $w = (w^1, \cdots, w^d)$ be vectors in $\bbc^d$. Then, the inner product `` $\cdot$" between $z$ and $w$ is defined as follows:
\begin{equation} \label{C-1-1}
z \cdot w =\iota(z)\cdot \iota (w) = \sum_{i=1}^{d} \Big[  (\mathrm{Re} z^i) (\mathrm{Re} w^i )+  ( \mathrm{Im} z^i) ( \mathrm{Im} w^i)  \Big ],
\end{equation}
where dot product in $\iota(z)\cdot \iota (w) $ is the natural dot product in $\bbr^{2d}$. 
\end{definition}
Next, we study elementary properties of a dot product $``\cdot"$ between two complex vectors. 
\begin{lemma}\label{L3.1}
Let $z = (z^1, \cdots, z^d) , w = (w^1, \cdots, w^d)$ be vectors in $\bbc^d$ and $\beta, \gamma \in \bbc$. Then, one has 
\begin{enumerate}[(i)]
\item $(\mathrm{i}z) \cdot w = -  z \cdot(\mathrm{i}w)$,
\item $(\mathrm{i}z) \cdot( \mathrm{i}w) = z \cdot w$,
\item $(\beta w) \cdot z = (\mathrm{Re} \beta) w \cdot z  + (\mathrm{Im}\beta) (\mathrm{i}w) \cdot z$,
\item $(\beta w) \cdot (\gamma z) = (\beta \bar{\gamma} w) \cdot z$.
\end{enumerate}
\end{lemma}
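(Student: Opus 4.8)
The plan is to reduce all four identities to two basic facts. The first is that $\cdot$ is $\bbr$-bilinear and symmetric, which is inherited verbatim from the standard dot product on $\bbr^{2d}$ through the $\bbr$-linear identification $\iota$. The second is the bridge identity
\[
z \cdot w = \mathrm{Re}\langle z, w\rangle = \mathrm{Re}\langle w, z\rangle,
\]
which follows by expanding $\overline{z^i}w^i = (\mathrm{Re}\,z^i - \mathrm{i}\,\mathrm{Im}\,z^i)(\mathrm{Re}\,w^i + \mathrm{i}\,\mathrm{Im}\,w^i)$, matching real parts against the definition \eqref{C-1-1}, and using $\langle w, z\rangle = \overline{\langle z, w\rangle}$. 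Once this dictionary is in place each claim becomes a one-line computation, so the proof is mainly a matter of organizing the bookkeeping.

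For (i) I would use that multiplication by $\mathrm{i}$ sends the component pair $(\mathrm{Re}\,z^i, \mathrm{Im}\,z^i)$ to $(-\mathrm{Im}\,z^i, \mathrm{Re}\,z^i)$, so that \eqref{C-1-1} gives $(\mathrm{i}z)\cdot w = \sum_i(-\mathrm{Im}\,z^i\,\mathrm{Re}\,w^i + \mathrm{Re}\,z^i\,\mathrm{Im}\,w^i)$, which is visibly the negative of $z\cdot(\mathrm{i}w)$. Equivalently, via the bridge identity, $(\mathrm{i}z)\cdot w = \mathrm{Re}(-\mathrm{i}\langle z, w\rangle)$ and $z\cdot(\mathrm{i}w) = \mathrm{Re}(\mathrm{i}\langle z, w\rangle)$, and $\mathrm{Re}(-\mathrm{i}c) = -\mathrm{Re}(\mathrm{i}c)$ closes it. Then (ii) is immediate by applying (i) twice (or by noting that $u\mapsto \mathrm{i}u$ is an orthogonal map on $\bbr^{2d}$): $(\mathrm{i}z)\cdot(\mathrm{i}w) = -z\cdot(\mathrm{i}(\mathrm{i}w)) = -z\cdot(-w) = z\cdot w$.

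For (iii) I would write $\beta = \mathrm{Re}\,\beta + \mathrm{i}\,\mathrm{Im}\,\beta$, so that $\beta w = (\mathrm{Re}\,\beta)w + (\mathrm{Im}\,\beta)(\mathrm{i}w)$, and apply $\bbr$-bilinearity of $\cdot$ in the first slot to split the product into the two stated terms. For (iv) the cleanest route is again the bridge identity: $(\beta w)\cdot(\gamma z) = \mathrm{Re}\langle \beta w, \gamma z\rangle = \mathrm{Re}(\bar\beta\gamma\langle w, z\rangle)$, while $(\beta\bar\gamma w)\cdot z = \mathrm{Re}\langle\beta\bar\gamma w, z\rangle = \mathrm{Re}(\overline{\beta\bar\gamma}\langle w, z\rangle) = \mathrm{Re}(\bar\beta\gamma\langle w, z\rangle)$, so the two sides agree. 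If one prefers to stay purely within the real picture, (iv) can instead be obtained by expanding both $\beta$ and $\gamma$ into real and imaginary parts via (iii) and collapsing the four resulting terms using (i) and (ii); the coefficients then reassemble into $\mathrm{Re}(\bar\beta\gamma)$ and $\mathrm{Im}(\bar\beta\gamma)$, reproducing (iii) applied to $\beta\bar\gamma w$.

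None of the steps is genuinely hard; the only place demanding care is (iv), where the conjugation on $\gamma$ must be tracked correctly. It is precisely the sesquilinearity of $\langle\cdot,\cdot\rangle$ — conjugate-linear in the first entry under the convention $\langle w, z\rangle = \sum_i \overline{w^i}z^i$ — that forces $\gamma$ to reappear as $\bar\gamma$ once it is moved to the left factor. I would therefore fix this convention at the outset and sanity-check the signs in (i) and (iv) against the scalar case $d=1$, $\beta=\gamma=\mathrm{i}$, where both sides collapse to $w\cdot z$, to be sure no stray sign survives.
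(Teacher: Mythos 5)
Your proof is correct. For parts (i)--(iii) your computations coincide with the paper's: the direct coordinate expansion of $\iota(\mathrm{i}z)$ for (i), applying (i) twice for (ii), and $\mathbb{R}$-bilinearity after writing $\beta w=(\mathrm{Re}\,\beta)w+(\mathrm{Im}\,\beta)(\mathrm{i}w)$ for (iii). Where you genuinely diverge is (iv): the paper stays entirely in the real picture, expanding $(\beta w)\cdot(\gamma z)$ into four terms via (iii), collapsing them with (i) and (ii), and reassembling the coefficients into $\mathrm{Re}(\beta\bar\gamma)$ and $\mathrm{Im}(\beta\bar\gamma)$ before invoking (iii) once more. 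Your primary route instead establishes the bridge identity $z\cdot w=\mathrm{Re}\langle z,w\rangle$ up front (this is exactly the real part of the paper's later Lemma~4.1, and your derivation of it is self-contained, so there is no circularity) and then reads (iv) off from the sesquilinearity $\langle\beta w,\gamma z\rangle=\bar\beta\gamma\langle w,z\rangle$ in one line. Your approach buys a shorter, less error-prone argument for (iv) and makes the sign/conjugation bookkeeping transparent; the paper's approach has the virtue of using only the real inner product and the already-proved parts (i)--(iii), with no appeal to the Hermitian form. You correctly note the paper's four-term expansion as an alternative, and your sanity check at $\beta=\gamma=\mathrm{i}$ is consistent with (ii).
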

\begin{proof}
\noindent (i)~Note that  
\[
\iota(\mathrm{i}z) = (-\mathrm{Im} z^1, \mathrm{Re} z^1, -\mathrm{Im} z^2, \mathrm{Re} z^2, \cdots, -\mathrm{Im}z^d, \mathrm{Re}z^d).
\]
By \eqref{C-1-1}, one has
\begin{align*}
(\mathrm{i}z) \cdot w &= \sum_{k = 1}^{d} \Big[ -(\mathrm{Im} z^k) (\mathrm{Re} w^k) + (\mathrm{Re} z^k) (\mathrm{Im} w^k) \Big] \\
&= - \sum_{\alpha = 1}^{d}  \Big[   (-\mathrm{Re} z^k) (\mathrm{Im} w^k) +  (\mathrm{Im} z^k) (\mathrm{Re} w^k) \Big] = -z \cdot (\mathrm{i}w).
\end{align*}
This yields the first relation. \newline

\noindent(ii)~We use the first identity to obtain
\[ (\mathrm{i}z) \cdot (\mathrm{i}w) =- z \cdot (\mathrm{i}(\mathrm{i}w) )=- z \cdot (-w) = z \cdot w.
\]
\noindent(iii) and (iv):~ By direct calculation, we have
\begin{equation} \label{C-1-2}
(\beta w) \cdot z = \Big ( (\mathrm{Re}\beta)w+(\mathrm{Im}\beta)(\mathrm{i}w) \Big) \cdot z  = (\mathrm{Re}\beta) (w \cdot z) + (\mathrm{Im}\beta) ((\mathrm{i}w) \cdot z),
\end{equation}
and
\begin{align*}
(\beta w) \cdot (\gamma z) &=\Big ( (\mathrm{Re}\beta)w+(\mathrm{Im}\beta)(\mathrm{i}w) \Big) \cdot \Big ( (\mathrm{Re}\gamma)z+(\mathrm{Im}\gamma)(\mathrm{i}z) \Big ) \\
&= (\mathrm{Re}\beta)(\mathrm{Re}\gamma) ( w \cdot  z)  +(\mathrm{Im}\beta)(\mathrm{Re}\gamma)  \Big( (\mathrm{i}w) \cdot z \Big) \\
&\hspace{0.2cm}+(\mathrm{Re}\beta)(\mathrm{Im}\gamma) ( w \cdot (\mathrm{i}z) ) +(\mathrm{Im}\beta)(\mathrm{Im}\gamma) \Big( (\mathrm{i}w) \cdot (\mathrm{i}z)  \Big).
\end{align*}
Now, we use the results of (i) and (ii) to obtain
\begin{align*}
( \beta w) \cdot (\gamma z) &=\big((\mathrm{Re}\beta)(\mathrm{Re}\gamma)+(\mathrm{Im}\beta)(\mathrm{Im}\gamma)\big) (w \cdot z) \\
&+\big((\mathrm{Im}\beta)(\mathrm{Re}\gamma)-(\mathrm{Re}\beta)(\mathrm{Im}\gamma)\big)  \Big( (\mathrm{i}w) \cdot z \Big) \\
&=\mathrm{Re}(\beta\bar{\gamma}) (w \cdot z) +\mathrm{Im}(\beta\bar{\gamma}) ( (\mathrm{i}w) \cdot z).
\end{align*}
Then, we use \eqref{C-1-2} to get the last identity:
\[ (\beta w) \cdot (\gamma z) = ( \beta\bar{\gamma}w) \cdot  z.
\]
\end{proof}
In what follows, we introduce definitions of the covariant derivative for scalar and vector-valued functions, respectively and divergence operator on $\mathbb{HS}^{d-1}$. To find the covariant derivative $\nabla_z$ on $\mathbb{HS}^{d-1}$ at the point $z\in \mathbb{HS}^{d-1}$, we use the covariant derivative $\tilde{\nabla}$ on the unit sphere $\bbs^{2d-1}$.
Let $f$ and $F$ be differentiable real-valued and vector-valued functions on $\mathbb{HS}^{d-1}$, respectively. Then, we define $\tilde{f}$ and ${\tilde F}$ on $\bbr^{2d}$ by 
\[   {\tilde f} := f \circ \iota^{-1}, \quad {\tilde F} := \iota \circ F \circ \iota^{-1}. \]
\begin{center}
\begin{tikzpicture}
  \matrix (m) [matrix of math nodes,row sep=3em,column sep=10em,minimum width=2em]
  {    \bbc^d& \bbr \\
     \bbr^{2d} & \\};
  \path[-stealth]
    (m-2-1) edge node [right] {$\iota^{-1}$} (m-1-1)
      (m-1-1)      edge [right] node [above] {$f$} (m-1-2)
    (m-2-1.east|-m-2-1) edge node [below] {\hspace{2cm}$\tilde{f}=f\circ\iota^{-1}$}
 (m-1-2);
\end{tikzpicture}\quad
\begin{tikzpicture}
  \matrix (m) [matrix of math nodes,row sep=3em,column sep=10em,minimum width=2em]
  {    \bbc^d& \bbc^d \\
     \bbr^{2d} & \bbr^{2d} \\};
  \path[-stealth]
    (m-2-1) edge node [right] {$\iota^{-1}$} (m-1-1)
      (m-1-1)      edge [right] node [above] {$F$} (m-1-2)
    (m-2-1.east|-m-2-2) edge node [below] {$\tilde{F}=\iota \circ F\circ \iota^{-1}$}
 (m-2-2)
    (m-1-2) edge node [left] {$\iota$} (m-2-2);
\end{tikzpicture}
\end{center}
\begin{definition} \label{D3.2}
Let $f$ and $F$ be a differentiable real-valued function and a vector field on $\mathbb{HS}^{d-1}$, respectively. 
\begin{enumerate}
\item
The covariant derivative of $f$ at $z \in \mathbb{HS}^{d-1}$ is defined as follows.
\begin{equation*} \label{C-1-3}
\nabla_zf(z):=\iota^{-1}(\tilde{\nabla}_{\iota(z)} \tilde{f}(\iota(z))).
\end{equation*}

\vspace{0.2cm}

\item
The divergence of $F$ at $z \in  \mathbb{HS}^{d-1}$ is defined as follows.
\begin{equation*} \label{C-1-4}
\nabla_z\cdot F(z):=\tilde{\nabla}_{\iota(z)}\cdot \tilde{F}(\iota(z)).
\end{equation*}
\item
The surface measure $d\sigma_z$ on $\mathbb{HS}^{d-1}$ is defined as a push-forward of the surface measure $d\sigma_\mathbf{x}$ on $\bbs^{2d-1}$ via  the inclusion map $\iota$:
\[
d\sigma_{z}:=\iota_*(d\sigma_\mathbf{x}).
\]
Then we have the following property:
\begin{equation*} \label{C-1-5}
\int_{\mathbb{HS}^{d-1}}f(z)d\sigma_z=\int_{\mathbb{S}^{2d-1}} f\circ \iota^{-1}(\mathbf{x})d\sigma_\mathbf{x},
\end{equation*}
where $d\sigma_\mathbf{x}$ is the usual surface area measure on the spherical surface $\mathbb{S}^{2d-1}$.
\end{enumerate}
\end{definition}
In the following lemma, we provide elementary identities for a later use. 
\begin{lemma}\label{L3.2}
The following assertions hold.
\begin{enumerate}
\item
Let $F$ be a vector field on $\mathbb{HS}^{d-1}$. Then one has
\[
\int_{\mathbb{HS}^{d-1}}\nabla_z\cdot F(z) d\sigma_z=0.
\]
\item
If $\phi(z)=z\cdot e$, then we have the following relation:
\[
\nabla_{z}\phi(z)=e-(e\cdot z)z.
\]
\end{enumerate}
\end{lemma}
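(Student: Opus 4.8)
The plan is to prove both assertions by transferring the problem to the unit sphere $\bbs^{2d-1}\subset\bbr^{2d}$ via the canonical identification $\iota$, where the analogous facts are classical. By Definition \ref{D3.2}, the covariant derivative and divergence on $\mathbb{HS}^{d-1}$ are \emph{defined} as pullbacks of the corresponding operators $\tilde\nabla$ on $\bbs^{2d-1}$, and the surface measure $d\sigma_z$ is the pushforward of $d\sigma_\mathbf{x}$. Thus each statement reduces, after applying $\iota$, to a standard identity on the round sphere.

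For part (1), I would write out the definitions directly:
\begin{align*}
\int_{\mathbb{HS}^{d-1}}\nabla_z\cdot F(z)\,d\sigma_z
=\int_{\bbs^{2d-1}}\big(\nabla_z\cdot F\big)\circ\iota^{-1}(\mathbf{x})\,d\sigma_\mathbf{x}
=\int_{\bbs^{2d-1}}\tilde\nabla_{\mathbf{x}}\cdot\tilde F(\mathbf{x})\,d\sigma_\mathbf{x},
\end{align*}
using the measure property in Definition \ref{D3.2}(3) and then the definition of $\nabla_z\cdot$ in Definition \ref{D3.2}(2). The right-hand side is the integral of the surface divergence of the tangent vector field $\tilde F$ over the closed, boundaryless manifold $\bbs^{2d-1}$, which vanishes by the divergence theorem on a compact manifold without boundary (equivalently, Stokes' theorem applied to the $(2d-2)$-form $\iota_{\tilde F}\,d\mathrm{vol}$). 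This is the cleanest route and requires only that $\tilde F$ be a tangent field, which holds because $F$ is a vector field on $\mathbb{HS}^{d-1}$ and $\iota$ maps it to a tangent field on $\bbs^{2d-1}$.

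For part (2), the approach is to compute the gradient of the ambient linear function and then project onto the tangent space. The function $\phi(z)=z\cdot e$ corresponds under $\iota$ to the linear function $\mathbf{x}\mapsto \mathbf{x}\cdot\iota(e)$ on $\bbr^{2d}$, whose Euclidean gradient is the constant vector $\iota(e)$. The covariant (surface) gradient on $\bbs^{2d-1}$ at a point $\mathbf{x}$ is obtained from the ambient gradient by subtracting its normal component, i.e. applying the orthogonal projection $\mathbb{P}_{\mathbf{x}^\perp}u=u-(u\cdot\mathbf{x})\mathbf{x}$ onto the tangent space $T_\mathbf{x}\bbs^{2d-1}$. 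Hence $\tilde\nabla_\mathbf{x}\tilde\phi=\iota(e)-(\iota(e)\cdot\mathbf{x})\mathbf{x}$, and pulling this back through $\iota^{-1}$ with $\mathbf{x}=\iota(z)$ gives, using $\iota(e)\cdot\iota(z)=e\cdot z$ from Definition \ref{D3.1}, the claimed formula $\nabla_z\phi(z)=e-(e\cdot z)z$.

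I expect neither step to present a genuine obstacle, since both are standard Riemannian facts once the definitions are unwound; the only point requiring mild care is bookkeeping the identification $\iota$ and verifying that the projection and the linear structure are compatible with $\iota$, which is exactly what Lemma \ref{L3.1} and Definition \ref{D3.1} guarantee (in particular that $\iota$ is a linear isometry onto $\bbr^{2d}$ intertwining the dot products). If one prefers to avoid invoking the divergence theorem abstractly, an alternative for part (1) is to expand $\nabla_z\cdot F$ in local coordinates or to use that for the round sphere the surface divergence integrates to zero by symmetry; but the Stokes' theorem argument is shortest and I would present that.
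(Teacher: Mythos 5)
Your proposal is correct and matches the paper's own proof essentially step for step: part (1) is proved by unwinding Definition \ref{D3.2} and invoking Stokes'/the divergence theorem on the closed sphere $\mathbb{S}^{2d-1}$, and part (2) by projecting the constant Euclidean gradient $\iota(e)$ onto the tangent space via $\mathbb{P}_{\mathbf{x}^\perp}$ and pulling back through $\iota^{-1}$. No substantive differences.
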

\begin{proof}
(i)~By Definition \ref{D3.2} and Stokes' theorem, we have
\[
\int_{\mathbb{HS}^{d-1}}\nabla_z\cdot F(z) d\sigma_z =\int_{\mathbb{HS}^{d-1}}\tilde{\nabla}_{\iota(z)}\cdot \tilde{F}(\iota(z))d\sigma_z =\int_{\mathbb{S}^{2d-1}}\tilde{\nabla}_{\mathbf{x}}\cdot \tilde{F}(\mathbf{x})d\sigma_\mathbf{x}=0.
\]
\newline
(ii)~Recall that for a differentiable function $f\in C^1(\bbr^{2d})$, we have
\begin{equation} \label{C-2}
\tilde{\nabla}_\mathbf{x}f(\mathbf{x})=\mathbb{P}_{\mathbf{x}^\perp}( {\nabla}f\left(\mathbf{x})\right),\quad \text{for all}~\mathbf{x} \in \mathbb{S}^{2d-1},
\end{equation}
where $\nabla$ denotes a standard gradient on $\bbr^{2d}$, and $\mathbb{P}_{\mathbf{x}^\perp}$ is the projection onto the hyperplane $\{y\in\bbr^{2d}: y\cdot x=0\}$:
\begin{equation} \label{C-3}
\mathbb{P}_{\mathbf{x}^\perp}\mathbf{v}=\mathbf{v}-(\mathbf{v}\cdot\mathbf{x})\mathbf{x},\quad \text{for all} ~~\mathbf{v}\in\bbr^{2d},\quad \mathbf{x}\in\mathbb{S}^{2d-1}\subseteq \bbr^{2d}.
\end{equation}
It follows from \eqref{C-2} and \eqref{C-3} that 
\begin{equation*} \label{C-4}
\tilde{\nabla}_{\mathbf{x}}f(\mathbf{x})=\nabla f(\mathbf{x})-\left(\nabla f(\mathbf{x})\cdot \mathbf{x}\right) \mathbf{x}.
\end{equation*}
Thus, for any $g \in {C}^1(\bbc^{d}, \bbr)$, one has 
\begin{align*}
\begin{aligned} \label{C-4-1}
\nabla_z g(z)&=\iota^{-1}(\tilde{\nabla}_{\iota(z)}\tilde{g}(\iota(z))) =\iota^{-1}\left(\nabla \tilde{g}(\iota(z))-(\nabla \tilde{g}(\iota(z))\cdot \iota(z))\iota(z)\right) \\
&=\iota^{-1}\left(\nabla \tilde{g}(\iota(z))\right)-(\nabla \tilde{g}(\iota(z))\cdot \iota(z))z.
\end{aligned}
\end{align*}
Now, we substiutte $g(z)= \phi(z) = z\cdot e$ for a constant complex vector $e\in\bbc^{d}$ to get 
\[
\nabla_{\bbr^{2d}}\tilde{\phi}(z)=\nabla_{\bbr^{2d}}\iota(e)\cdot\iota(z)=\iota(e).
\]
This yields the desired estimate:
\begin{align*}
\nabla_{z}\phi(z)&=\iota^{-1}\left(\iota(e)\right)-(\iota(e)\cdot \iota(z))z
=e-(e\cdot z)z.
\end{align*}
\end{proof}
Next, we introduce the crucial lemma to derive a weak-solution formulation of \eqref{C-1}.
\begin{lemma}\label{L3.3}
Let $\phi$ and $F = (F^1, \cdots, F^d)$ be a smooth real-valued function and complex vector field on $\mathbb{HS}^{d-1}$, respectively. Then, the divergence of $\phi F$ satisfies
\[
\nabla_z\cdot \big(\phi(z)F(z)\big)=\phi(z)\nabla_z \cdot F(z)+\nabla_z\phi(z)\cdot F(z), \quad z \in \mathbb{HS}^{d-1}.
\]

\end{lemma}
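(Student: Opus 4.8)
The plan is to transfer the whole identity to the real unit sphere $\mathbb{S}^{2d-1}$ via the canonical identification $\iota$, where the Leibniz rule for the divergence is classical, and then to pull the result back to $\mathbb{HS}^{d-1}$ using the definitions in Definition \ref{D3.2}. First I would record the single algebraic observation that makes the reduction work: since $\iota:\bbc^d\to\bbr^{2d}$ is $\bbr$-linear and $\phi$ is \emph{real}-valued, the tilde operation is multiplicative on the product $\phi F$. Concretely, for every $\mathbf{x}\in\mathbb{S}^{2d-1}$ with $z=\iota^{-1}(\mathbf{x})$, one has
\[
\widetilde{\phi F}(\mathbf{x})=\iota\big(\phi(z)F(z)\big)=\phi(z)\,\iota\big(F(z)\big)=\tilde{\phi}(\mathbf{x})\,\tilde{F}(\mathbf{x}),
\]
the middle equality using precisely that $\phi(z)\in\bbr$ together with the $\bbr$-linearity of $\iota$. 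I would flag here that this step would fail for a complex scalar factor, since $\iota$ is not $\bbc$-linear; the real-valuedness of $\phi$ is essential.

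Second, by the definition of the divergence on $\mathbb{HS}^{d-1}$ and the factorization just obtained,
\[
\nabla_z\cdot\big(\phi(z)F(z)\big)=\tilde{\nabla}_{\iota(z)}\cdot\widetilde{\phi F}(\iota(z))=\tilde{\nabla}_{\iota(z)}\cdot\big(\tilde{\phi}\,\tilde{F}\big)(\iota(z)).
\]
At this point I would invoke the standard product rule for the divergence of a scalar times a tangent vector field on the Riemannian manifold $\mathbb{S}^{2d-1}$, namely $\tilde{\nabla}\cdot(\tilde{\phi}\tilde{F})=\tilde{\phi}\,(\tilde{\nabla}\cdot\tilde{F})+\tilde{\nabla}\tilde{\phi}\cdot\tilde{F}$. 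If one prefers an intrinsic derivation, this follows by writing the spherical divergence as the projected Euclidean divergence as in \eqref{C-2}--\eqref{C-3} and expanding, but it is most cleanly quoted as the classical Leibniz rule on a Riemannian manifold.

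Finally I would translate the two resulting terms back to $\mathbb{HS}^{d-1}$, all evaluated at $\mathbf{x}=\iota(z)$. The first term is immediate: $\tilde{\phi}(\iota(z))=\phi(z)$ and $(\tilde{\nabla}\cdot\tilde{F})(\iota(z))=\nabla_z\cdot F(z)$ by Definition \ref{D3.2}, so it equals $\phi(z)\,\nabla_z\cdot F(z)$. For the second term I would use that $\nabla_z\phi(z)=\iota^{-1}\big(\tilde{\nabla}_{\iota(z)}\tilde{\phi}(\iota(z))\big)$, equivalently $\tilde{\nabla}_{\iota(z)}\tilde{\phi}(\iota(z))=\iota(\nabla_z\phi(z))$, together with $\tilde{F}(\iota(z))=\iota(F(z))$; then by the defining relation $a\cdot b=\iota(a)\cdot\iota(b)$ of the dot product in Definition \ref{D3.1},
\[
\tilde{\nabla}_{\iota(z)}\tilde{\phi}(\iota(z))\cdot\tilde{F}(\iota(z))=\iota(\nabla_z\phi(z))\cdot\iota(F(z))=\nabla_z\phi(z)\cdot F(z).
\]
Adding the two contributions yields the claimed identity. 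There is no substantial obstacle here: the proof is essentially a transport-and-translate argument, and the only point requiring genuine care is the multiplicativity step, which hinges on $\phi$ being real-valued, together with keeping the bookkeeping between the tilde (Euclidean-sphere) quantities and their $\mathbb{HS}^{d-1}$ counterparts consistent.
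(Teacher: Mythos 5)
Your proposal is correct and follows essentially the same route as the paper's proof: establish the multiplicativity $\widetilde{\phi F}=\tilde{\phi}\,\tilde{F}$ (using that $\phi$ is real-valued), push the identity to $\mathbb{S}^{2d-1}$ via Definition \ref{D3.2}, apply the classical Leibniz rule there, and pull the two terms back through $\iota^{-1}$ and the dot-product definition. Your explicit flagging of why the real-valuedness of $\phi$ is essential is a small clarification the paper leaves implicit, but the argument is otherwise identical.
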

\begin{proof} Recall that for $z \in \in \mathbb{HS}^{d-1}$, 
\[
F(z) := (F_1(z), \cdots, F_d(z)), \quad  (\phi F)(z) := \phi(z) F(z). 
\]
Now, we define $\widetilde{\phi F}$: 
\begin{align*}
\widetilde{\phi F}&: \iota(z)=(\mathrm{Re}~z^1, \mathrm{Im}~z^1, \cdots, \mathrm{Re}~z^d, \mathrm{Im}~z^d)\\
& \mapsto \big(\phi(\alpha)\mathrm{Re}~F^1(z), \phi(z)\mathrm{Im}~F^1(z), \cdots, \phi(z)\mathrm{Re}~F^d(z), \phi(z)\mathrm{Im}~F^d(z)\big) = \tilde{\phi}(\iota(z))\tilde F(\iota(z))
\end{align*}
so that 
\begin{align}\label{X-1}
\widetilde{(\phi F)}(\iota(z))= \tilde{\phi}(\iota(z))\tilde{F}(\iota(z)).
\end{align}
Therefore, one has
\begin{align*}
&\nabla_z\cdot \big(\phi(z)F(z)\big)
=\nabla_z\cdot \big((\phi F)(z)\big)
=\tilde{\nabla}_{\psi(z)}\cdot \big((\widetilde{\phi F})(\psi(z))\big)\\
&\hspace{0.5cm} =\tilde{\nabla}_{\psi(z)}\cdot \big(\tilde{\phi}(\psi(z))\tilde{F}(\iota(z))\big) &&(\mbox{by} \quad \eqref{X-1})\\
&\hspace{0.5cm} \overset{\mathrm{\star}}{=} \tilde{\nabla}_{\iota(z)}\tilde{\phi}(\iota(z)) \cdot \tilde{F}(\iota(z))
+ \tilde{\phi}(\psi(z)) \tilde{\nabla}_{\iota(z)} \cdot \tilde{F}(\iota(z))\\
&\hspace{0.5cm}= \iota^{-1}\Big(\tilde{\nabla}_{\iota(z)}\tilde{\phi}(\iota(z))\Big) \cdot \iota^{-1}\big(\tilde{F}(\iota(z))\big)
+ \tilde{\phi}(\iota(z)) \tilde{\nabla}_{\iota(z)} \cdot \tilde{F}(\iota(z))\\
&\hspace{0.5cm}= \iota^{-1}(\tilde{\nabla}_{\iota(z)}\tilde{\phi}(\iota(z))) \cdot F(z)
+ \phi(z)\big(\tilde{\nabla}_{\iota(z)}\cdot \tilde{F}(\iota(z))\big)\\
&\hspace{0.5cm}= \nabla_z\phi(z)\cdot F(z) + \phi(z)\nabla_z \cdot F(z),
\end{align*}
where the equality $\overset{\mathrm{\star}}{=}$ holds from the product rule for the divergence on $\mathbb{S}^{2d-1}$.
\end{proof}
Finally, we combine Lemma \ref{L3.2} and Lemma \ref{L3.3} to derive an integration by parts formula.
\begin{proposition}\label{P3.1}
Let $\phi$ and $F$ be a smooth real-valued function and a complex vector field on $\mathbb{HS}^{d-1}$, respectively. Then the following identity holds:
\[
\int_{\mathbb{HS}^{d-1}} \phi(z)\nabla_z \cdot F(z)d\sigma_z=-\int_{\mathbb{HS}^{d-1}}\nabla_z\phi(z)\cdot F(z)d\sigma_z.
\]
\end{proposition}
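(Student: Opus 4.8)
The plan is to combine the two results immediately preceding this statement, namely the product rule for the divergence on $\mathbb{HS}^{d-1}$ (Lemma \ref{L3.3}) and the vanishing of the integral of a divergence (Lemma \ref{L3.2}(i)). The key observation is that the product $\phi F$ is itself a smooth complex vector field on $\mathbb{HS}^{d-1}$, since $\phi$ is a smooth real-valued function and $F$ is a smooth vector field; hence Lemma \ref{L3.2}(i) applies to $\phi F$ directly.

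First I would invoke Lemma \ref{L3.3} to expand the divergence of the product:
\[
\nabla_z\cdot\big(\phi(z)F(z)\big)=\phi(z)\,\nabla_z\cdot F(z)+\nabla_z\phi(z)\cdot F(z),\quad z\in\mathbb{HS}^{d-1}.
\]
Next I would integrate this identity over $\mathbb{HS}^{d-1}$ against the surface measure $d\sigma_z$. On the left-hand side, applying Lemma \ref{L3.2}(i) to the vector field $\phi F$ gives
\[
\int_{\mathbb{HS}^{d-1}}\nabla_z\cdot\big(\phi(z)F(z)\big)\,d\sigma_z=0.
\]
Rearranging the resulting equality then yields precisely
\[
\int_{\mathbb{HS}^{d-1}}\phi(z)\,\nabla_z\cdot F(z)\,d\sigma_z=-\int_{\mathbb{HS}^{d-1}}\nabla_z\phi(z)\cdot F(z)\,d\sigma_z,
\]
which is the claimed formula.

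There is no substantial obstacle here, as the proposition is essentially a formal consequence of the product rule together with the divergence theorem encoded in Lemma \ref{L3.2}(i); the real work was already carried out in establishing those two lemmas via the identification $\iota$ with $\mathbb{S}^{2d-1}$ and Stokes' theorem. The only point warranting a brief remark is the smoothness and well-definedness of $\phi F$ as a vector field so that the hypotheses of both lemmas are met, which is guaranteed by the smoothness assumptions on $\phi$ and $F$; the finiteness of all integrals is automatic since $\mathbb{HS}^{d-1}$ is compact.
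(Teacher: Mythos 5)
Your proposal is correct and coincides with the paper's own proof: both apply the product rule of Lemma \ref{L3.3} to $\phi F$, integrate, and use Lemma \ref{L3.2}(i) to conclude that the integral of $\nabla_z\cdot(\phi F)$ vanishes. Your added remark on the smoothness of $\phi F$ is a harmless elaboration of a point the paper leaves implicit.
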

\begin{proof}
By Lemma \ref{L3.2} and Lemma \ref{L3.3}, one has 
\[
0
= \int_{\mathbb{HS}^{d-1}}\nabla_z\cdot \big(\phi(z)F(z)\big) d\sigma_z
= \int_{\mathbb{HS}^{d-1}}\left(\phi(z)\nabla_z \cdot F(z) + \nabla_z\phi(z)\cdot F(z) \right)d\sigma_z.
\]
This yields the desired identity.
\end{proof}
\subsection{A measure-theoretic formulation.} \label{sec:3.2}
In this subsection, we present a measure-theoretic formulation for the Cauchy problem \eqref{C-1}.  Let ${C}_w([0, T); \mathcal{P}(\Xi))$ be the set of all weakly continuous probability measure-valued function from $[0,T)$ to $\mathcal{P}(\Xi)$. \newline

Next, we recall a concept of measure-valued solution to the Cauchy problem \eqref{C-1} as follows. 

\begin{definition}\label{mvs_def}
 For $T\in[0, \infty]$, $\mu\in {C}_w([0, T); \mathcal{P}(\Xi))$ is a measure-valued solution to \eqref{C-1} with the initial measure $\mu_0\in\mathcal{P}(\Xi)$ if $\mu$ satisfies the following properties:
\begin{enumerate}
\item $\mu$ is weakly continuous:
\begin{equation} \label{C-5}
t\mapsto \langle{\mu_t, f}\rangle\mbox{ is continuous for all }  f\in C_0^1(\Xi).
\end{equation}
\item $\mu_t$ satisfies the following equation for all the test functions $\phi\in C_0^1([0, T) \times \Xi)$:
\begin{equation} \label{C-6}
\langle{\mu_t, \phi(t,\cdot, \cdot)}\rangle-\langle{\mu_0, \phi(0,\cdot, \cdot)}\rangle=\int_0^t\langle{\mu_s, \partial_s\phi+L[\mu_s]\cdot\nabla_z\phi}\rangle ds,
\end{equation}
where $L[\mu]$ is defined as
\begin{equation*}
L[\mu]( z, \Omega)=\Omega z+\int_{\Xi} \Big[\kappa_0(z_*-\langle{z_*, z}\rangle z)+\kappa_1(\langle{z, z_*}\rangle z-\langle{z_*, z}\rangle z) \Big ] d\mu(z_*, \Omega_*).
\end{equation*}
\end{enumerate}
\end{definition}
\begin{remark} \label{R3.1}
Note that an empirical measure made of particle solutions to \eqref{M-3} is in fact a measure-valued solution of \eqref{C-1} in the sense of Definition \ref{mvs_def}. More precisely, let $(z_i, \Omega_i)$ be a solution of system \eqref{M-3}. Then we define the empirical measure $\mu_t^N$ as follows:
\begin{equation} \label{C-7-1}
\mu_t^N:=\frac{1}{N}\sum_{j=1}^N\delta_{z_j(t)}\otimes\delta_{\Omega_j}.
\end{equation}
Note that for the empirical measure \eqref{C-7-1}, the alignment force $L[\mu_s^N]$ can be simplified as follows.
\begin{align}
\begin{aligned} \label{C-8}
L[\mu_s^N](z, \Omega)&=\Omega z+\int_{\Xi}[\kappa_0(z_*-\langle{z_*, z}\rangle z)+\kappa_1(\langle{z, z_*}\rangle z-\langle{z_*, z}\rangle z)]d\mu_s^N(z_*, \Omega_*)\\
&=\Omega z+\frac{1}{N}\sum_{k=1}^N[\kappa_0(z_k-\langle{z_k, z}\rangle z)+\kappa_1(\langle{z, z_k}\rangle z-\langle{z_k, z}\rangle z)].
\end{aligned}
\end{align}
Next, we check the defining relations \eqref{C-5} and \eqref{C-6} one by one. \newline

\noindent $\bullet$ (Verification of the relation \eqref{C-6}): We use \eqref{C-8} to see
\begin{align*}
&\int_{0}^t\langle{\mu^N_s, \partial_s \phi+L[\mu_s]\cdot\nabla_z\phi}\rangle ds\\
&=\int_{0}^t\Bigg\langle{\mu^N_s, \partial_s \phi+\left(\Omega z+\frac{1}{N}\sum_{k=1}^N[\kappa_0(z_k-\langle{z_k, x}\rangle z)+\kappa_1(\langle{z, z_k}\rangle z-\langle{z_k, z}\rangle z)]\right)\cdot\nabla_z\phi}\Bigg\rangle ds\\
&=\frac{1}{N}\sum_{i=1}^N\int_0^t\left(\partial_s\phi(s, z_i, \Omega_i)+\frac{1}{N}\sum_{k=1}^N[\kappa_0(z_k-\langle{z_k, z
_i}\rangle z_i)+\kappa_1(\langle{z_i, z_k}\rangle z-\langle{z_k, z_i}\rangle z_i)]\cdot\nabla_z\phi(s, z_i, \Omega_i)\right)ds\\
&=\frac{1}{N}\sum_{i=1}^N\int_0^t\left(\partial_s\phi(s,z_i, \Omega_i)+\frac{dz_i}{ds}\cdot\nabla_z\phi(s,z_i, \Omega_i)\right)ds =\frac{1}{N}\sum_{i=1}^N\int_0^t\frac{d}{ds}\phi(s,z_i, \Omega_i)ds\\
&=\frac{1}{N}\sum_{i=1}^N(\phi(t,z_i(t), \Omega)-\phi(0,z_i(0), \Omega))=\langle{\mu_t^N, \phi(t,\cdot, \cdot)}\rangle-\langle{\mu^N_0, \phi(0,\cdot, \cdot)}\rangle.
\end{align*}

\vspace{0.2cm}

\noindent $\bullet$ (Verification of the relation \eqref{C-5}):  Since $z_i(t)$ and $f$ are continuous, $\Omega_i$ is a constant matrix, and
\begin{align*}
\langle{\mu_t^N, f}\rangle=\frac{1}{N}\sum_{i=1}^Nf(z_i(t), \Omega_i),
\end{align*}
we can easily obtain that the map $ t\mapsto \langle{\mu_t^N, f}\rangle$ is continuous. Therefore, the empirical measure $\mu^N$ is a measure-valued solution of the Cauchy problem \eqref{C-1}.
\end{remark}

\subsection{Measure-valued solutions} \label{sec:3.3}
In this subsection, we present a uniform mean-field limit of the LHS model \eqref{M-3} and using this, we show that a global well-posedness of measure-valued solution to the Cauchy problem \eqref{C-1} for some class of initial data.

First, we provide $\ell^p$-stability estimates. For a state configuration $Z= \{z_i\}\in(\mathbb{HS}^{d-1})^N$, we define the $\ell^p$ norm of $Z$ as follows:
\[
\|Z\|_p := \left(\sum_{k=1}^N \|z_k\|^p\right)^\frac{1}{p}, \quad p \in [1, \infty),
\]
where $\|z\|$ is a standard $\ell^2$ norm of a complex vector $z\in\bbc^d$.
\vspace{0.2cm} 
\begin{proposition}[$\ell^p$-stability]\label{Lp_stability}
Let $Z=\{z_j\}$ and $\tilde{Z}=\{\tilde{z}_j\}$ be two solutions of system \eqref{M-3} with the initial data $Z^0=\{z_j^0\}$ and $\tilde{Z}^0=\{\tilde{z}_j^0\}$, respectively. Then, the following assertions hold. 
\begin{enumerate}
\item
For any fixed constant $T \in (0, \infty)$ and $p \in [1,\infty)$, there exists a time-depedent constant $G_T := \exp\big(2T(|\kappa_0|+|\kappa_0+2\kappa_1|)\big)>0$ which is independent of $N$ such that 
\[
\sup_{0\leq t\leq T}\|Z(t)-\tilde{Z}(t)\|_p \leq G_T\| Z(0)- \tilde{Z}(0) \|_p.
\]
\vspace{0.1cm}

\item Suppose $Z$ and $\tilde{Z}$ exhibit complete aggregation exponentially fast, i.e., there exist positive constants $A$ and $B$ such that
\[
\max \Big \{ ~\max_{i,j}\| z_i(t) - z_j(t) \|, ~ \max_{k,l}\| \tilde{z}_k(t) - \tilde{z}_l(t) \|~ \Big \}  < Ae^{-Bt}.
\]
Then, there exists a positive constant $G$ independent of $t$ such that 
\[
\sup_{0 \leq t < \infty} \|Z(t)-\tilde{Z}(t)\|_2 \leq G\| Z(0)- \tilde{Z}(0) \|_2, \quad t \in \bbr_+.
\]
Furthermore, if $A$ and $B$ are independent of $N$, then so is $G$.
\end{enumerate}
\end{proposition}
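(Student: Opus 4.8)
The plan is to run a Grönwall argument on $\|Z(t)-\tilde Z(t)\|_p^p$ for part (1) and on $\|Z(t)-\tilde Z(t)\|_2^2$ for part (2), differentiating in time and controlling the pairing of each particle difference against the right-hand side of \eqref{M-3}. Writing $v_j:=z_j-\tilde z_j$ and using the identity $u\cdot w=\mathrm{Re}\langle u,w\rangle$ underlying Lemma \ref{L3.1}, I would first record
\[
\frac{d}{dt}\|Z-\tilde Z\|_p^p=p\sum_{j}\|v_j\|^{p-2}\,v_j\cdot(\dot z_j-\dot{\tilde z}_j),
\]
so the whole estimate reduces to bounding the real scalar $v_j\cdot(\dot z_j-\dot{\tilde z}_j)$. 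Since $\Omega_j$ is skew-Hermitian, $\langle v_j,\Omega_j v_j\rangle$ is purely imaginary, whence $v_j\cdot\Omega_j v_j=0$; this kills the free-flow contribution and explains why $G_T$ carries no dependence on $\Omega$.

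For the coupling, the key device is to use $\|z_j\|=1$ (so $\langle z_j,z_j\rangle=1$) and to split $\langle z_j,z_c\rangle$ into real and imaginary parts, which rewrites the interaction field as
\[
h(z,c)=\kappa_0\big(c-\mathrm{Re}\langle z,c\rangle\,z\big)+(\kappa_0+2\kappa_1)\,\mathrm{Im}\langle z,c\rangle\,(\mathrm{i}z).
\]
This isolates exactly the two coefficients $\kappa_0$ and $\kappa_0+2\kappa_1$ appearing in $G_T$. I would then estimate the pairing of $v_j$ against $h(z_j,z_c)-h(\tilde z_j,\tilde z_c)$ by repeated add-and-subtract together with Cauchy--Schwarz and $\|z_j\|=\|\tilde z_j\|=1$, bounding it by a constant multiple of $\|v_j\|\big(\|v_j\|+\|z_c-\tilde z_c\|\big)$. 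Summing over $j$, using $\|z_c-\tilde z_c\|\le\frac1N\sum_k\|v_k\|$ and a Hölder inequality to convert the resulting $\ell^1$-type centroid term back into $\|Z-\tilde Z\|_p^p$, yields $\frac{d}{dt}\|Z-\tilde Z\|_p^p\le 2p(|\kappa_0|+|\kappa_0+2\kappa_1|)\|Z-\tilde Z\|_p^p$, and Grönwall gives part (1). The main bookkeeping obstacle here is tracking the constants through the Hölder step so that they collapse to precisely $|\kappa_0|+|\kappa_0+2\kappa_1|$ with no hidden $N$-dependence.

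For part (2) the naive Grönwall constant blows up, so I would refine the $\ell^2$ estimate into a time-dependent differential inequality with an \emph{integrable} growth rate. Assuming the natural frequencies are homogeneous, as is needed for exponential aggregation, one may reduce to $\Omega=0$ since the free flow is then unitary and preserves the $\ell^2$ norm. I would exploit two features special to $p=2$. First, $\sum_j v_j=N(z_c-\tilde z_c)$ makes the translational part of the $\kappa_0$-coupling combine with its projection part into the dissipative term $-\kappa_0\sum_j\|v_j-(z_c-\tilde z_c)\|^2\le 0$, up to corrections of order $(D^2+\tilde D^2)\|Z-\tilde Z\|_2^2$. Second, aggregation yields the smallness bounds $|1-\langle z_c,z_j\rangle|\le D$, $\|z_c-z_j\|\le D$ and $|\mathrm{Im}\langle z_j,z_c\rangle|\le D$ (with analogues for $\tilde Z$), where $D(t)=\max_{k,l}\|z_k-z_l\|<Ae^{-Bt}$ and likewise $\tilde D$. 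The target is $\frac{d}{dt}\|Z-\tilde Z\|_2\le C\big(D(t)+\tilde D(t)\big)\|Z-\tilde Z\|_2$; since $\int_0^\infty(D+\tilde D)\,dt\le 2A/B<\infty$, Grönwall then produces the uniform bound with $G=\exp(2CA/B)$, which is $N$-independent whenever $A,B$ are.

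The hard part is the rotational coupling, i.e. the $(\kappa_0+2\kappa_1)$-term, whose net contribution to $\tfrac12\frac{d}{dt}\|Z-\tilde Z\|_2^2$ is $(\kappa_0+2\kappa_1)\sum_j\big(\mathrm{Im}\langle z_j,z_c\rangle-\mathrm{Im}\langle\tilde z_j,\tilde z_c\rangle\big)\,\mathrm{Im}\langle\tilde z_j,v_j\rangle$. Here term-wise estimates are treacherous: bounding each factor separately either produces a spurious $\sqrt N$ (using the diameter bound on the first factor, $\|v_j\|$ on the second, then Cauchy--Schwarz) or a non-decaying constant rate (using only Lipschitz bounds), and both destroy the uniform-in-time, $N$-independent conclusion. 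The resolution is to keep the combined difference $\langle z_j,z_c\rangle-\langle\tilde z_j,\tilde z_c\rangle$ intact and show that its leading rigid-motion part cancels against the projection terms, so that only pieces carrying a genuine factor $D+\tilde D$ survive (hence exponentially small and integrable), while the remaining quadratic-in-$v$ part is absorbed by the $\kappa_0$-dissipation. Making this cancellation quantitative for all $t$, rather than merely asymptotic as $t\to\infty$, is the crux of the argument.
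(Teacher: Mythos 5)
Your part (1) is essentially the paper's own argument: differentiate $\|Z-\tilde Z\|_p^p$, kill the free flow by skew-Hermitianity of $\Omega_j$, split the coupling into the $\kappa_0$- and $(\kappa_0+2\kappa_1)$-pieces via real and imaginary parts of $\langle z_j,z_c\rangle$, arrive at $\frac{d}{dt}\|z_i-\tilde z_i\|\le\frac{|\kappa_0|+|\kappa_0+2\kappa_1|}{N}\sum_k\big(\|z_i-\tilde z_i\|+\|z_k-\tilde z_k\|\big)$, and close with H\"older and Gr\"onwall. This matches Appendix B and yields exactly $G_T$. Your reduction of the $\kappa_0$-part in (2) to the nonpositive term $-\kappa_0\sum_j\|v_j-(z_c-\tilde z_c)\|^2$ plus exponentially small corrections is also consistent with (indeed slightly sharper than) what the paper does.

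The gap is precisely at the point you flag as the crux of part (2), and your proposed resolution there is not the one that works. You suggest that the leading, non-decaying part of $\langle z_j,z_c\rangle-\langle\tilde z_j,\tilde z_c\rangle$ \emph{cancels against the projection terms} and that the residual quadratic-in-$v$ contribution is \emph{absorbed by the $\kappa_0$-dissipation}. The rotational term carries the coefficient $\kappa_0+2\kappa_1$, whose size relative to $\kappa_0$ is unconstrained in the hypotheses of part (2); absorbing a term of size $|\kappa_0+2\kappa_1|\,\|Z-\tilde Z\|_2^2$ into a dissipation of strength $\kappa_0$ would force a restriction on the coupling gains that the proposition does not assume, and there is no structural reason for a coefficient-independent cancellation between the two couplings. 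As written, your differential inequality with rate $C(D+\tilde D)$ is asserted but not derived.

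The mechanism the paper uses is a symmetrization \emph{internal} to the rotational coupling. With $v_i:=z_i-\tilde z_i$, $a_i:=\mathrm{i}z_i\cdot\tilde z_i$ and $I_{ik}=\mathrm{Im}\langle z_i,z_k\rangle=\mathrm{i}z_i\cdot z_k$, the relevant quantity is $\mathcal{I}_{i,k}=-(I_{ik}-\tilde I_{ik})\,\mathrm{Im}\langle z_i,\tilde z_i\rangle=-(I_{ik}-\tilde I_{ik})\,a_i$, and repeated use of $\mathrm{i}z\cdot z=0$ gives the exact decomposition
\[
I_{ik}-\tilde I_{ik}
=\mathrm{i}(z_i-\tilde z_i)\cdot(z_k-z_i)
+\mathrm{i}(\tilde z_i-\tilde z_k)\cdot(z_k-\tilde z_k)
+\big(a_i-a_k\big).
\]
The first two pieces carry explicit diameter factors $z_k-z_i$ and $\tilde z_i-\tilde z_k$, hence a factor $Ae^{-Bt}$, and combined with $|a_i|=|\mathrm{i}z_i\cdot(\tilde z_i-z_i)|\le\|v_i\|$ they produce the integrable Gr\"onwall rate without any spurious $\sqrt N$. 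The third piece has no smallness at all, but its total contribution satisfies
\[
-\sum_{i,k=1}^N(a_i-a_k)\,a_i=-\frac12\sum_{i,k=1}^N(a_i-a_k)^2\le0,
\]
so it is discarded from the upper bound by a sign argument after summing over the symmetric index pair $(i,k)$ --- not by cancellation against the $\kappa_0$-projection. This identity is the missing ingredient; without it (or an equivalent quantitative cancellation) your plan for the uniform-in-time, $N$-independent bound in part (2) does not close.
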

\begin{proof}
Since a proof is rather lengthy, we leave it to  Appendix \ref{App-B}.
\end{proof}
\begin{remark}\label{Lp_rmk}
By Theorem \ref{T2.3} and Proposition \ref{Lp_stability}, one can derive a uniform stability estimate for a homogeneous ensemble. More precisely, we assume that
\[ \Omega_j \equiv \Omega, \quad  |\kappa_1| < \frac{\kappa_0}{2}, \quad 
\max_{k,l}\left| 1 - \langle z^0_k, z^0_l \rangle \right| < 1 - \frac{2|\kappa_1|}{\kappa_0},
\]
Then, there exists a positive constant $G_\infty$ independent of $N$ such that 
\[
\sup_{0 \leq t < \infty}\|Z(t)-\tilde{Z}(t)\|_2 \leq G_\infty \| Z(0)- \tilde{Z}(0) \|_2.
\]
\end{remark}
Next, we provide a global well-posedness of measure-valued solution to the kinetic LHS model. First, we recall the concept of a mean-field limit which provides a construction of measure-valued solution to \eqref{C-1}.
\begin{definition}\label{D3.4}
We say the kinetic LHS model \eqref{C-1} is \emph{derivable}  from the LHS model \eqref{M-3} in $[0,T)$, if the following two properties hold.
\begin{enumerate}
\item For given initial measure $\mu_0 \in \mathcal{P}_p(\Xi)$, $\mu_0$ can be approximated by a sequence of empirical measures $\mu^N_0$ of \eqref{M-3} in Wasserstein metric:
\[
\lim_{N \to \infty} W_p(\mu_0,\mu^N_0) = 0.
\]
\item There exists a unique measure-valued solution $\mu$ of \eqref{C-1} with the initial data $\mu_0$, and for each $t \in [0,T)$, $\mu_t$ can be approximated by a sequence of empirical measures $\{\mu^N_t\}$ of \eqref{M-3} in the time interval $[0,T)$:
\[
\lim_{N \to \infty} \sup_{t \in [0,T)} W_p(\mu_t, \mu^N_t)=0.
\]
\end{enumerate}
\end{definition}
Next, we are ready to state our second main result on the unique solvability of \eqref{C-1}.
\begin{theorem}\label{T3.1} 
The following two assertions hold. 
\begin{enumerate}
\item  (Finite-in-time mean field limit):~For $T \in (0, \infty)$, the kinetic LHS model with identical natural frequency matrices $\Omega_j\equiv \Omega$ is derivable from the LHS model in a finite time interval $[0, T)$ in the sense of Definition \ref{D3.4}.

\vspace{0.1cm}

\item 
(Uniform-in-time mean filed limit):~Suppose system parameters and initial measure $\mu_0$ satisfy 
\[
|\kappa_1| < \frac{\kappa_0}{2},\quad 0\leq\sup_{z, w \in \mathrm{supp}(\mu_0)}|1-\langle z, w\rangle| < 1 - \frac{2|\kappa_1|}{\kappa_0}-\delta,\quad  \Omega_j\equiv \Omega,
\]
for some positive constant $\delta$. Then the kinetic LHS model is derivable from the LHS model in the whole time interval $[0, \infty)$ with respect to $W_2$-metric.
\end{enumerate} 
\end{theorem}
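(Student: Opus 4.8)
The plan is to reduce both assertions to a single \emph{stability estimate for measure-valued solutions} and then to run the standard particle-in-cell method. Two ingredients are already available: by Remark \ref{R3.1}, for any finite configuration the empirical measure $\mu_t^N=\frac1N\sum_j\delta_{z_j(t)}\otimes\delta_{\Omega}$ built from a solution of \eqref{M-3} is itself a measure-valued solution of \eqref{C-1}; and empirical measures are dense in $(\mathcal P_p(\Xi),W_p)$, so one may pick $\mu_0^N$ with $W_p(\mu_0,\mu_0^N)\to0$, chosen in the setting of (2) so that $\mathrm{supp}(\mu_0^N)\subseteq\mathrm{supp}(\mu_0)$. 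Modulo the solution splitting property we also take $\Omega\equiv0$ throughout.

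The core step will be a Dobrushin-type bound: any two measure-valued solutions $\mu_t,\nu_t$ of \eqref{C-1} obey $W_p(\mu_t,\nu_t)\le C(T)\,W_p(\mu_0,\nu_0)$ on $[0,T)$. First I would note that for fixed $t$ the flux $L[\mu_t](\cdot)$ is a smooth vector field tangent to $\mathbb{HS}^{d-1}$ (a direct computation gives $\mathrm{Re}\langle z,L[\mu_t](z)\rangle=0$, using skew-Hermiticity of $\Omega$ and $\|z\|=1$), so its flow $\Phi^\mu_t$ is well defined, preserves $\|z\|=1$, and satisfies $\mu_t=(\Phi^\mu_t)_\#\mu_0$. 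Taking an optimal plan $\gamma_0\in\Pi(\mu_0,\nu_0)$ and pushing it forward along $(\Phi^\mu_t,\Phi^\nu_t)$ yields an admissible plan, so $W_p(\mu_t,\nu_t)^p\le\int\|\Phi^\mu_t(z)-\Phi^\nu_t(\tilde z)\|^p\,d\gamma_0$. Differentiating in time and splitting $L[\mu_t](z)-L[\nu_t](\tilde z)$ into the spatial increment $L[\mu_t](z)-L[\mu_t](\tilde z)$ and the measure increment $L[\mu_t](\tilde z)-L[\nu_t](\tilde z)$, the first is controlled by $\|z-\tilde z\|$ and the second by $W_p(\mu_t,\nu_t)$, because the integrand defining $L$ is polynomial in $(z,z_*)$ and hence Lipschitz on the compact set $\mathbb{HS}^{d-1}$; these are exactly the bounds already executed for the particle system in Proposition \ref{Lp_stability}. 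A Gronwall argument for $D(t):=\big(\int\|\Phi^\mu_t-\Phi^\nu_t\|^p\,d\gamma_0\big)^{1/p}\ge W_p(\mu_t,\nu_t)$ then delivers the finite-time estimate with $C(T)$ of the form $G_T$.

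Granting this, assertion (1) follows by particle-in-cell. Since the $\mu_t^N$ are measure-valued solutions, the stability bound gives $\sup_{[0,T)}W_p(\mu_t^N,\mu_t^M)\le C(T)\,W_p(\mu_0^N,\mu_0^M)\to0$, so $\{\mu_t^N\}$ is Cauchy in $C([0,T);\mathcal P_p(\Xi))$ and converges to some $\mu_t$ by completeness. I would then pass to the limit in the weak formulation \eqref{C-6}: this is legitimate because $W_p$-convergence is weak convergence (Remark \ref{Wp_remark}) and $L[\mu_s^N]\to L[\mu_s]$ by the Lipschitz dependence on the measure, so $\mu_t$ is a measure-valued solution; uniqueness is immediate from the same estimate with $\mu_0=\nu_0$.

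For assertion (2) the sole new point is to make $C(T)$ independent of both $t$ and $N$. The support hypothesis forces $\max_{k,l}|1-\langle z_k^0,z_l^0\rangle|<1-\tfrac{2|\kappa_1|}{\kappa_0}-\delta$ for every approximating datum, so Theorem \ref{T2.3} yields exponential aggregation $\max_{i,j}\|z_i(t)-z_j(t)\|\le Ae^{-Bt}$ with $A,B$ independent of $N$, and Proposition \ref{Lp_stability}(2) (equivalently Remark \ref{Lp_rmk}) supplies a uniform constant $G_\infty$. Feeding this aggregation input into the coupled-flow computation in place of the bare Gronwall upgrades the measure-level estimate to $\sup_{0\le t<\infty}W_2(\mu_t^N,\mu_t^M)\le G_\infty\,W_2(\mu_0^N,\mu_0^M)$, after which the particle-in-cell argument runs verbatim on $[0,\infty)$. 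I expect the genuine obstacle to be precisely this uniform-in-time estimate: the naive Gronwall constant grows like $e^{Ct}$, so one must thread the exponential decay of Theorem \ref{T2.3} through the coupled characteristic computation while keeping every constant independent of $N$ and respecting the complex inner-product structure carried by the $\kappa_1$-term of $L[\mu]$.
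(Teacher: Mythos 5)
Your proposal reaches the same conclusions by the same overall particle-in-cell scaffolding, but the core stability step is handled by a genuinely different device. The paper never proves a Dobrushin estimate for general measure-valued solutions: it works exclusively with empirical measures, and to compare $\mu_t^n$ and $\mu_t^m$ with $n\neq m$ it approximates an optimal transport plan by one with rational weights, clears denominators, and relabels both configurations as $nmD$-particle systems so that the particle-level $\ell^p$-stability of Proposition \ref{Lp_stability} applies directly; the Cauchy property in $W_p$, the limit solution, and (via Remark \ref{Lp_rmk}) the uniform-in-time version under the support condition all follow from that single discrete estimate, with uniqueness delegated to the argument of \cite{F-L}. You instead propose a measure-level stability bound obtained by coupling the characteristic flows $\Phi_t^\mu,\Phi_t^\nu$ through an optimal plan and running Gronwall on $D(t)=\bigl(\int\|\Phi_t^\mu-\Phi_t^\nu\|^p\,d\gamma_0\bigr)^{1/p}$. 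This buys you more: stability (hence Corollary \ref{C3.1}) and uniqueness for arbitrary measure-valued solutions in one stroke, and it dispenses with the rational-weight relabeling entirely, since empirical measures with different particle numbers are compared as measures rather than as particle configurations. The price is the step you assert without proof, namely that every weak solution in the sense of Definition \ref{mvs_def} satisfies $\mu_t=(\Phi_t^\mu)_\#\mu_0$; this representation is standard for continuity equations with velocity fields that are Lipschitz on the compact manifold $\mathbb{HS}^{d-1}$, but it is exactly the point the paper avoids by restricting to empirical measures, where the push-forward structure is automatic from Remark \ref{R3.1}. For assertion (2) your argument effectively collapses back onto the paper's: the exponential aggregation of Theorem \ref{T2.3} is a statement about $N$-particle configurations, so you (correctly) apply it to the approximating empirical data, whose supports inherit the hypothesis on $\mathrm{supp}(\mu_0)$, and invoke Proposition \ref{Lp_stability}(2) to make the constant independent of $N$ and $t$ — which is precisely the paper's route. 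Provided you supply the push-forward representation lemma, the proof is sound.
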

\begin{proof} 
(1)~(Proof of the first statement): We modify the proof of Theorem 3 of \cite{HKLN2}. First, we construct a sequence of empirical measures converging to a measure-valued solution for \eqref{C-1} in Wasserstein metric, and then, 
we verify the validity and uniqueness of the measure-valued solution. Since a proof is rather lengthy, we divide its proof into two parts.\\
 
\noindent$\bullet$ \textbf{Part A} (Construction of approximate solutions): First, we recall the result of Theorem 6.18 in \cite{Vi}. For a Polish space $(X,d)$, let $\mathcal{P}(X)$ be a space of probability measures on $X$, which can be equipped with Wasserstein metric from Remark \ref{Wp_remark}. For any given $p \in [1,\infty)$ and a probability measure $\mu \in \mathcal{P}(X)$, the set
\begin{align*}
\left\{ \sum_{i \in I} a_i \delta_{z_i} ~ : ~ \forall n \in \mathbb{N}, \quad 0 \leq a_n \in \mathbb{Q}, \quad \sum_{i \in I} a_i=1, \quad I \textrm{ is a finite subset of } \mathbb{N} \right\}
\end{align*}
is a dense subset of $\mathcal{P}_p(X)$.
Therefore, we can approximate  the initial measure $\mu_0$ by a sequence of empirical measures $\{ \mu_0^N \}$:
\begin{align*}
\lim_{N \to \infty} W_p(\mu_0^N, \mu_0) = 0,
\end{align*}
where $\mu_0^N$ is the sum of $N$ suitable Dirac measures uniformly weighted by $\frac{1}{N}$. To approximate the quantity $W^p_p(\mu_0^n,\mu_0^m)$, we denote
\begin{align*}
\mu_0^n = \frac{1}{n}\sum_{i=1}^n \delta_{z_{i_0}}, \quad \mu_0^m = \frac{1}{m}\sum_{j=1}^m \delta_{\bar{z}_{j_0}}.
\end{align*}
Since above measures are concentrated on the finite number of particles, infimum in the definition of $W_p(\mu_0^n,\mu_0^m)$ is achieved and therefore, one has 
\begin{align*}
W^p_p(\mu_0^n,\mu_0^m) = \frac{1}{nm} \sum_{i=1}^n \sum_{j=1}^m a_{ij} \| z_{i_0} - \bar{z}_{j_0} \|_p^p,
\end{align*}
for some optimal plan $(a_{ij})$ satisfying
\begin{align*}
\sum_{k=1}^n a_{kj}=n, \quad \sum_{l=1}^m a_{il}=m, \quad 0 \leq a_{ij} \in \bbr, \quad i,j \in \mathcal{N}.
\end{align*}
By perturbing $a_{ij}$, we can also approximate optimal strategy by rational strategy:
\begin{align}\label{rational}
\sum_{k=1}^n r_{kj}=n, \quad \sum_{l=1}^m r_{il}=m, \quad 0 \leq r_{ij} \in \mathbb{Q}, \quad i,j \in \mathcal{N},
\end{align}
in the sense that
\begin{align*}
0 \leq \frac{1}{nm} \sum_{i=1}^n \sum_{j=1}^m r_{ij} \| z_{i_0} - \bar{z}_{j_0} \|_p^p - W^p_p(\mu_0^n,\mu_0^m)  < \varepsilon,
\end{align*}
for arbitrary $\varepsilon>0$. Next, we introduce a common denominator $D$
\begin{align*}
r_{ij} := \frac{N_{ij}}{D}, \quad N_{ij},D \in \bbz_+,
\end{align*}
to rewrite rational strategy:
\begin{align}
\begin{aligned}\label{rational3}
\frac{1}{nm} \sum_{i=1}^n \sum_{j=1}^m r_{ij} \| z_{i_0} - \bar{z}_{j_0} \|_p^p =\frac{1}{nmD} \sum_{i=1}^n \sum_{j=1}^m N_{ij} \| z_{i_0} - \bar{z}_{j_0} \|_p^p  =\frac{1}{nmD} \sum_{k=1}^{mnD} \| z_{k_0} - \bar{z}_{k_0} \|_p^p,
\end{aligned}
\end{align}
where $\| z_{i_0} - \bar{z}_{j_0} \|_p^p$ is counted $N_{ij}$ times when $k$ runs through $1$ to $mnD$ in the last term.

On the other hand, we can associate rational numbers $(r_{ij})$ satisfying \eqref{rational} to a transport plan with marginals $\mu_t^n$ and $\mu_t^m$. Therefore, by the same procedure as in \eqref{rational3}, we have
\begin{align*}
W_p^p(\mu_t^n,\mu_t^m) \leq  \frac{1}{nmD} \sum_{k=1}^{mnD} \| z_{k}(t) - \bar{z}_{k}(t) \|_p^p.
\end{align*}
Let $T \in (0,\infty)$ be given.  Then we use Proposition \ref{Lp_stability} and obtain a constant $C=\max{(G_T,1)}$ to see
\begin{align}
\begin{aligned}\label{cor_pf2}
W_p^p(\mu_t^n,\mu_t^m) &\leq \frac{1}{nmD} \sum_{k=1}^{mnD} \| z_{k}(t) - \bar{z}_{k}(t) \|_p^p \\
&\leq \frac{C^p}{nmD} \sum_{k=1}^{mnD} \| z_{k_0} - \bar{z}_{k_0} \|_p^p \leq C^p\bigg(W_p^p(\mu_0^n, \mu_0^m)+\varepsilon\bigg),
\end{aligned}
\end{align}
for any $t \in (0,T]$, where $\varepsilon \to 0$ as $(r_{ij}) \to (a_{ij})$. \newline

Since $W_p^p(\mu_0^n, \mu_0^m)$ can be taken arbitrarily small by enlarging $n$ and $m$, we can conclude that $W_p(\mu_t^n,\mu_t^m)$ is a Cauchy sequence. From the completeness of $\mathcal{P}_p(\Xi)$(see Theorem 6.18 of \cite{Vi}), we can define a measure-valued solution $\mu$ with the initial data $\mu_0$, as a weak limit of $\mu_t^N$ in the Wasserstein metric.\\

\noindent$\diamond$ \textbf{Part B} (weak limit $\mu$ is a unique measure-valued solution): First, we show that the defining relation \ref{C-5} of Definition \ref{mvs_def} is satisfied (see \cite{H-L} for the detailed arguments). To check \eqref{C-6} in Definition \ref{mvs_def}, we need to show that $\mu$ satisfies the following equation for all the test functions $\phi\in C_0^1([0, T) \times \Xi)$:
\[
\langle{\mu_t, \phi(t,\cdot, \cdot)}\rangle-\langle{\mu_0, \phi(0,\cdot, \cdot)}\rangle=\int_0^t\langle{\mu_s, \partial_s\phi+L[\mu_s]\cdot\nabla_z\phi}\rangle ds,
\]
where $L[\mu]$ is defined by
\[
L[\mu](t,z, \Omega)=\Omega z+\int_{\Xi}[\kappa_0(z_*-\langle{z_*, z}\rangle z)+\kappa_1(\langle{z, z_*}\rangle z-\langle{z_*, z}\rangle z)]d\mu(z_*, \Omega_*).
\]
Since $\mu^N$ is a measure-valued solution (see Remark \ref{R3.1}), one has 
\begin{equation} \label{C-9}
\langle{\mu^N_t, \phi(t,\cdot, \cdot)}\rangle-\langle{\mu^N_0, \phi(0,\cdot, \cdot)}\rangle=\int_0^t\langle{\mu^N_s, \partial_s\phi+L[\mu^N_s]\cdot\nabla_z\phi}\rangle ds.
\end{equation}

\vspace{0.2cm}

\noindent Due to Remark \ref{Wp_remark}, L.H.S. of \eqref{C-9} becomes 
\[
\langle{\mu_t^N, \phi(t,\cdot, \cdot)}\rangle-\langle{\mu_0^N, \phi(0,\cdot, \cdot)}\rangle \to \langle{\mu_t, \phi(t,\cdot, \cdot)}\rangle-\langle{\mu_0, \phi(0,\cdot, \cdot)}\rangle \quad \text{as} \quad N \to \infty.
\]
\noindent For R.H.S. of \eqref{C-9}, we claim:
\begin{equation} \label{C-10}
\langle{\mu_s^N, \partial_s\phi+L[\mu_s^N]\cdot\nabla_z\phi}\rangle
\to \langle{\mu_s, \partial_s\phi+L[\mu_s]\cdot\nabla_z\phi}\rangle \quad \text{uniformly with respect to } s.
\end{equation}
{\it Proof of \eqref{C-10}}: Again from Remark \ref{Wp_remark}, we have 
\begin{align*}
\left| \langle{\mu_s^N, \partial_s\phi}\rangle - \langle{\mu_s, \partial_s\phi}\rangle \right| \to 0 \quad \text{as} \quad N \to \infty.
\end{align*}
Therefore, it is enough to make the following term arbitrary small:
\begin{align*}
&\left| \langle{\mu_s^N, L[\mu_s^N]\cdot\nabla_z\phi}\rangle - \langle{\mu_s, L[\mu_s]\cdot\nabla_z\phi}\rangle \right|.
\end{align*}
This can be obtained from the following estimate:
\begin{align*}
&\left| \langle{\mu_s^N, L[\mu_s^N]\cdot\nabla_z\phi}\rangle - \langle{\mu_s, L[\mu_s]\cdot\nabla_z\phi}\rangle \right| \\
&\hspace{0.5cm} =  \left| \langle{\mu_s^N-\mu_s, L[\mu_s]\cdot\nabla_z\phi}\rangle + \langle{\mu_s^N, \left(L[\mu_s^N]-L[\mu_s]\right)\cdot\nabla_z\phi}\rangle \right|\\
&\hspace{0.5cm}\leq \left| \langle{\mu_s^N-\mu_s, L[\mu_s]\cdot\nabla_z\phi}\rangle \right|+\sup \left( \left(L[\mu_s^N]-L[\mu_s]\right) \cdot\nabla_z\phi\right).
\end{align*}
Recall that $\mu_s^N$ converges weakly to $\mu_s$ and this convergence is uniform from the fact that the estimate \eqref{cor_pf2} is time-invariant.  Therefore, the estimate \eqref{C-10} is achieved and this implies 
\[
\int_0^t\langle{\mu_s^N, \partial_s\phi+L[\mu_s^N]\cdot\nabla_z\phi}\rangle ds
\to \int_0^t\langle{\mu_s, \partial_s\phi+L[\mu_s]\cdot\nabla_z\phi}\rangle ds,
\]
for any test function $\phi\in C_0^1([0, T) \times \Xi)$.  Therefore we have an existence of measure-valued solution. The uniqueness can be followed by the same argument as in \cite{F-L}. \vspace{0.5cm}
 \\
%Finally, if we show that $\mu_t$ is indeed the unique measure-valued solution\textcolor{blue}{(to be updated)}, we have a desired result.
(2) (Proof of the second statement):~Let $\mu^N_0=\frac{1}{N}\sum_{i=1}^N\delta_{z_i^0}$ be the initial empirical measure and $\mu_0$ be the given initial measure. Since $\mathrm{supp}(\mu_0)$ is compact, we may restrict the domain $\mathbb{HS}^{d-1}$ to $\mathrm{supp}(\mu^0)$, so that support $\mu^N_0$ is a subset of $\mu_0$(See \cite{Vi} for the detailed argument). Therefore, by a priori condition on $\mathrm{supp}(\mu_0)$ and Remark \ref{Lp_rmk}, the estimation \eqref{cor_pf2} is valid uniformly for any $t \in (0, \infty]$.
\end{proof}

\vspace{0.5cm}

As a corollary of Theorem \ref{T3.1}, one obtains a finite-time stability estimate for measure-valued solutions with respect to initial measures. 

\begin{corollary}
\emph{(Finite-in-time stability)} \label{C3.1} For finite $T>0$, let $\mu, \nu\in C_{w}(\bbr_+;\mathcal{P}(\Xi))$ be the measure-valued solutions to the kinetic LHS model with the initial measures $\mu_0, \nu_0\in\mathcal{P}_p(\Xi)$, respectively. Then, there exists a positive constant $C = C(T)$ such that
\[
W_p(\mu_t, \nu_t)\leq C(T) \cdot W_p(\mu_0, \nu_0), \quad t \in [0,T).
\] 
\end{corollary}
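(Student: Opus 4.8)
The plan is to deduce this Wasserstein stability estimate from the finite-time mean-field limit of Theorem \ref{T3.1} together with the elementary pairwise $\ell^p$-stability of the particle system in Proposition \ref{Lp_stability}, via a particle-in-cell (coupling) argument. The guiding idea is that stability at the kinetic level should be inherited from the far simpler particle-level estimate, once the two kinetic solutions are realized as limits of empirical measures that are \emph{coupled} through an optimal transport plan at the initial time.

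First I would fix an optimal transport plan $\gamma_0 \in \Pi(\mu_0, \nu_0)$ realizing $W_p(\mu_0, \nu_0)$, and approximate $\gamma_0$ by a uniformly weighted empirical measure $\gamma_0^N = \frac{1}{N}\sum_{i=1}^N \delta_{(w_i^0, \tilde w_i^0)}$ on $\Xi \times \Xi$ whose marginals $\mu_0^N = \frac{1}{N}\sum_i \delta_{w_i^0}$ and $\nu_0^N = \frac{1}{N}\sum_i \delta_{\tilde w_i^0}$ satisfy $W_p(\mu_0^N, \mu_0)\to 0$, $W_p(\nu_0^N, \nu_0)\to 0$, and $\frac{1}{N}\sum_i \|w_i^0 - \tilde w_i^0\|^p \to W_p^p(\mu_0, \nu_0)$. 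This coupled discretization is built exactly as in Part A of the proof of Theorem \ref{T3.1}: one approximates $\gamma_0$ by a rational-weight empirical measure, then clears denominators and replicates atoms so that both marginals become uniformly weighted on a common number $N$ of points while keeping the joint cost arbitrarily close to $W_p^p(\mu_0, \nu_0)$.

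Next I would evolve the two particle configurations $Z(t) = \{w_i(t)\}$ and $\tilde Z(t) = \{\tilde w_i(t)\}$ under the LHS model \eqref{M-3} from the coupled initial data, so that $\mu_t^N = \frac{1}{N}\sum_i \delta_{w_i(t)}$ and $\nu_t^N = \frac{1}{N}\sum_i \delta_{\tilde w_i(t)}$ are measure-valued solutions by Remark \ref{R3.1}. Pairing the $i$-th atoms furnishes an admissible (not necessarily optimal) transport plan, whence
\[
W_p^p(\mu_t^N, \nu_t^N) \leq \frac{1}{N}\sum_{i=1}^N \|w_i(t) - \tilde w_i(t)\|^p = \frac{1}{N}\|Z(t) - \tilde Z(t)\|_p^p.
\]
Invoking Proposition \ref{Lp_stability}(1) with constant $G_T$ bounds the right-hand side by $\frac{G_T^p}{N}\|Z(0) - \tilde Z(0)\|_p^p = G_T^p \cdot \frac{1}{N}\sum_i \|w_i^0 - \tilde w_i^0\|^p$, which tends to $G_T^p\, W_p^p(\mu_0, \nu_0)$ as $N \to \infty$.

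Finally I would pass to the limit. The finite-time derivability in Theorem \ref{T3.1}(1) gives $\sup_{t \in [0,T)} W_p(\mu_t^N, \mu_t)\to 0$ and $\sup_{t \in [0,T)} W_p(\nu_t^N, \nu_t)\to 0$, so the triangle inequality
\[
W_p(\mu_t, \nu_t) \leq W_p(\mu_t, \mu_t^N) + W_p(\mu_t^N, \nu_t^N) + W_p(\nu_t^N, \nu_t)
\]
yields, upon letting $N \to \infty$, the bound $W_p(\mu_t, \nu_t) \leq G_T\, W_p(\mu_0, \nu_0)$, so one may take $C(T) = G_T = \exp\big(2T(|\kappa_0| + |\kappa_0 + 2\kappa_1|)\big)$. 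I expect the main obstacle to lie in the first step: producing a single coupled empirical approximation that simultaneously approximates \emph{both} marginals by uniform $N$-atom measures and nearly realizes the optimal joint cost. Moreover, the convergence $W_p(\mu_t^N, \mu_t)\to 0$ presupposes that each $\mu_0^N$ is precisely of the uniformly weighted empirical form to which Theorem \ref{T3.1} applies, so some care is needed to ensure the coupling construction preserves this property for both marginals at once.
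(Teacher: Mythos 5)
Your proposal is correct and follows essentially the same route as the paper: the paper's proof of Corollary \ref{C3.1} simply cites the estimate \eqref{cor_pf2} (which already encodes the coupled rational-weight empirical discretization of an optimal plan, the common-denominator replication of atoms, and the propagation of the joint cost via Proposition \ref{Lp_stability}) together with the triangle inequality against the finite-time mean-field limits. The only cosmetic difference is that you take $C(T)=G_T$ while the paper uses $C=\max(G_T,1)$.
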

\begin{proof}
This is a direct consequence of \eqref{cor_pf2} and the triangle inequality.
\end{proof}

\begin{remark}\label{Fin_rmk}
For $p =2$, we get an uniform-in-time stabiltiy of $N$-particle system in Proposition \ref{Lp_stability}. It follows that under the same assumption with Corollary \ref{C3.1} except for the finite-time condition, we have 
\[
W_2(\mu_t, \nu_t) \leq C \cdot W_2(\mu_0, \nu_0), \quad t \in [0, \infty),
\]
where $C$ is indepentent of $t$.
\end{remark}

%\begin{proof}
%See ``Particle and Kinetic Models for Swarming Particles on a Sphere and Stability Properties" Corollary 1.
%\end{proof}

\section{Emergent behaviors of the kinetic LHS model} \label{sec:4}
\setcounter{equation}{0}
In this section, we study emergent behaviors of the kinetic LHS model. In \cite{G-H}, emergent dynamics of the mean-field kinetic model for the LM model has been investigated. Although the LM model and LS model are different, we basically follow the same strategy in \cite{G-H} to analyze emergent dynamics of the kinetic LHS model for a homogeneous ensemble. Consider the LHS model \eqref{M-3} with the same natural frequency matrix:
\[ \Omega_j\equiv \Omega \quad \mbox{for all $j\in\mathcal{N}$}. \]
First, we define the notion of complete aggregation in a kinetic setting. Let $\pi$ be the projection map $\pi : \mathbb{HS}^{d-1}\times \mathrm{Skew}_{d}\mathbb{C} \rightarrow \mathbb{HS}^{d-1}$. Then we define a distribution function $\rho$ by the push-forward of $f$ with respect to $\pi$:
\[
\rho (t,z) = \rho_t(z) := \pi \# f_t(z, \Omega),
\]
where $f_t(z, \Omega)=f(t,z, \Omega)$. For any measurable set $A \subseteq\Xi$, we have 
\[
\rho(t, A) = f(\pi^{-1}(t, A)).
\]
Now we can write the aggregation force in terms of $\rho$ : 
\begin{align*}
\begin{aligned}
L[f](z, \Omega)&=\Omega z+\int_{\Xi}[\kappa_0(z_*-\langle{z_*, z}\rangle z)+\kappa_1(\langle{z, z_*}\rangle z-\langle{z_*, z}\rangle z)]f(t,z_*, \Omega_*)d\sigma_{z_*}d\Omega_*\\
&=\Omega z+\int_{\mathbb{HS}^{d-1}}[\kappa_0(z_*-\langle{z_*, z}\rangle z)+\kappa_1(\langle{z, z_*}\rangle z-\langle{z_*, z}\rangle z)]\rho(t,z_*)d\sigma_{z_*}.
\end{aligned}
\end{align*}
Then, equation \eqref{C-1} can be rewritten in terms of $\rho$ as  follows:
\begin{align}\label{D-1}
\begin{cases}
\partial_t \rho+\nabla_z \cdot(L[\rho]\rho)=0, \quad t > 0, \quad z\in\mathbb{HS}^{d-1}, \\
\displaystyle L[\rho](z)=\Omega z+\int_{\mathbb{HS}^{d-1}}[\kappa_0(z_*-\langle{z_*, z}\rangle z)+\kappa_1(\langle{z, z_*}\rangle z-\langle{z_*, z}\rangle z)]\rho(t,z_*)d\sigma_{z_*}\\
\rho(0,z)=\rho_0(z),
\end{cases}
\end{align} 
where we abused the notation $L[\rho](z):=L[f](z, \Omega)$. Note that for the particle model \eqref{M-3}, we have the following equivalent formulation of complete aggregation \eqref{M-2}:
\begin{align}
\begin{aligned} \label{D-1-1}
&\lim_{t \to \infty}\max_{i,j}\|z_i(t)-z_j(t)\|=0  \\
& \hspace{0.5cm} \Longleftrightarrow \quad \lim_{t \to \infty}\max_{k}\|z_k(t)-z_c(t)\|=0 \quad \Longleftrightarrow \quad \lim_{t \to \infty}\max_{k}W_p(\delta_{z_k(t)},\delta_{z_c(t)})=0,
\end{aligned}
\end{align}
where the second arrow follows from $W_p(\delta_x,\delta_y)=\|x-y\|$. \newline

The last formulation in \eqref{D-1-1} can be interpreted in terms of measures. This motivates the following concept of complete aggregation for the kinetic LHS model  \eqref{D-1}.
\begin{definition}\label{measure_cg}
\emph{(Complete aggregation)} The kinetic LHS model \eqref{D-1} exhibits complete aggregation, if for any measure-valued solution $\rho = \rho(t,z)$ of \eqref{D-1}, there exists a time-dependent Dirac measure $\delta_{w(t)}$ such that 
\[
\lim_{t\to\infty}W_2(\rho(t, \cdot), \delta_{w(t)}) = 0.
\]
\end{definition} 
Our third main result is the following result on emergent dynamics.
\begin{theorem} \label{T4.1}
Suppose coupling strengths satisfy
\[
\kappa_0>0,\quad \kappa_0+2\kappa_1\geq0,
\]
and let $\rho$ be a solution to \eqref{D-3}.Then, we have 
\[
\lim_{t\to\infty} \int_{\mathbb{HS}^{d-1}} \Big(\|J_\rho\|^2-|z\cdot J_\rho|^2 \Big )d\sigma_z=0,
\]
where $J_\rho$ is the expectation of $z$:
\begin{equation} \label{D-1-2}
J_\rho = \int_{\mathbb{HS}^{d-1}}z\rho(t,z)d\sigma_z.
\end{equation}
 \end{theorem}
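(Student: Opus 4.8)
The plan is to study the temporal evolution of the squared order parameter $R^2 := \|J_\rho\|^2 = J_\rho\cdot J_\rho$ and then extract the desired limit from Barbalat's lemma \cite{Ba}. First I would differentiate $R^2$ in time. Since only $\rho$ depends on $t$ in $J_\rho=\int_{\mathbb{HS}^{d-1}}z\rho\,d\sigma_z$, the bilinearity of the real dot product gives
\[
\frac12\frac{dR^2}{dt}=J_\rho\cdot\dot J_\rho=\int_{\mathbb{HS}^{d-1}}(z\cdot J_\rho)\,\partial_t\rho\,d\sigma_z=-\int_{\mathbb{HS}^{d-1}}(z\cdot J_\rho)\,\nabla_z\cdot(L[\rho]\rho)\,d\sigma_z,
\]
where $J_\rho=J_\rho(t)$ is held fixed inside the spatial integral. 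Applying the integration-by-parts formula of Proposition \ref{P3.1} with the real-valued test function $\phi(z)=z\cdot J_\rho$, whose covariant gradient is $\nabla_z\phi=J_\rho-(J_\rho\cdot z)z$ by Lemma \ref{L3.2}, I obtain
\[
\frac12\frac{dR^2}{dt}=\int_{\mathbb{HS}^{d-1}}\big(J_\rho-(J_\rho\cdot z)z\big)\cdot L[\rho](z)\,\rho(t,z)\,d\sigma_z.
\]

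Next I would insert the explicit form of the flux obtained by integrating the interaction kernel against $\rho$, namely $L[\rho](z)=\kappa_0 J_\rho-(\kappa_0+\kappa_1)\langle J_\rho,z\rangle z+\kappa_1\langle z,J_\rho\rangle z$. The key simplification is the tangency identity $z\cdot L[\rho](z)=0$ (which reflects the conservation of $\|z\|=1$), so that the term $(J_\rho\cdot z)\big(z\cdot L[\rho]\big)$ vanishes and only $J_\rho\cdot L[\rho]$ survives. Writing $\langle J_\rho,z\rangle=p+\mathrm{i}q$ with $p=\mathrm{Re}\langle J_\rho,z\rangle=z\cdot J_\rho$ and $q=\mathrm{Im}\langle J_\rho,z\rangle$, and evaluating each real dot product by the bilinearity rules of Lemma \ref{L3.1}, the integrand collapses to
\[
\kappa_0\big(\|J_\rho\|^2-|z\cdot J_\rho|^2\big)+(\kappa_0+2\kappa_1)\,q^2.
\]
Since $\|z\|=1$, the Cauchy--Schwarz inequality gives $\|J_\rho\|^2\ge|\langle J_\rho,z\rangle|^2=p^2+q^2\ge p^2=|z\cdot J_\rho|^2$, so both summands are nonnegative under the hypotheses $\kappa_0>0$ and $\kappa_0+2\kappa_1\ge0$. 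This yields the monotonicity $\frac{dR^2}{dt}\ge0$.

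Together with the a priori bound $R^2=\|J_\rho\|^2\le\big(\int_{\mathbb{HS}^{d-1}}\|z\|\rho\,d\sigma_z\big)^2=1$, the function $R^2$ is monotone and bounded, hence convergent, so that $\int_0^\infty\frac{dR^2}{dt}\,dt<\infty$. To apply Barbalat's lemma I still need $\frac{dR^2}{dt}$ to be uniformly continuous, which I would secure by bounding $\frac{d^2R^2}{dt^2}$ uniformly in $t$. I expect this to be the main obstacle: differentiating the integral expression for $\frac{dR^2}{dt}$ once more produces terms in $\dot J_\rho$ and in $\partial_t\rho=-\nabla_z\cdot(L[\rho]\rho)$, the latter again handled by integration by parts. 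The saving grace is that every building block is uniformly bounded — on $\mathrm{supp}(\rho)$ one has $\|z\|=1$, while $\|J_\rho\|\le1$ forces $\|L[\rho]\|$ and its covariant derivatives to be bounded by a constant depending only on $\kappa_0$ and $\kappa_1$ — so the second derivative admits a time-independent bound, and the calculation is a finite (if tedious) bookkeeping of such terms.

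Finally, Barbalat's lemma \cite{Ba} gives $\frac{dR^2}{dt}\to0$ as $t\to\infty$. Since the two summands of the integrand above are separately nonnegative and their integral against $\rho$ tends to zero, each integral tends to zero; in particular $\kappa_0\int_{\mathbb{HS}^{d-1}}\big(\|J_\rho\|^2-|z\cdot J_\rho|^2\big)\rho(t,z)\,d\sigma_z\to0$, and dividing by $\kappa_0>0$ delivers the claimed emergent estimate.
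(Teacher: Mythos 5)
Your proposal is correct and follows essentially the same route as the paper: it derives the identity $\tfrac{dR^2}{dt}=2\kappa_0\int(\|J_\rho\|^2-|z\cdot J_\rho|^2)\rho\,d\sigma_z+2(\kappa_0+2\kappa_1)\int|(\mathrm i z)\cdot J_\rho|^2\rho\,d\sigma_z$ (your $q^2$ is exactly $|(\mathrm i z)\cdot J_\rho|^2$, and your explicit computation with $z\cdot L[\rho]=0$ is just an unpackaged version of the paper's decomposition $Q_z=\kappa_0\mathbb P_{z^\perp}+(\kappa_0+2\kappa_1)\mathbb P_{\mathrm i z}$ in Lemma \ref{L4.2}), then combines monotonicity, boundedness of $R^2$, and a uniform bound on $\tfrac{d^2R^2}{dt^2}$ with Barbalat's lemma. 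The second-derivative bound that you only sketch is carried out in detail in Appendix \ref{App-D}, and your reasoning for why it holds (uniform boundedness of $z$, $J_\rho$, $L[\rho]$ and their derivatives on the compact sphere) is precisely the argument used there.
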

 \begin{proof} We leave its proof in Section \ref{sec:4.3}.
  \end{proof}

\subsection{Solution splitting property}  \label{sec:4.1}
In this subsection, we study a solution splitting property for \eqref{D-1}.  Consider the continuity equation \eqref{D-1}:
\begin{align}\label{D-2}
\begin{cases}
\partial_t \tilde{\rho}+\nabla_z \cdot(\tilde{L}[\tilde{\rho}]\tilde{\rho})=0, \quad t > 0, \quad z\in\mathbb{HS}^{d-1}, \\
\displaystyle\tilde{L}[\tilde{\rho}](z)=\int_{\mathbb{HS}^{d-1}}[\kappa_0(z_*-\langle{z_*, z}\rangle z)+\kappa_1(\langle{z, z_*}\rangle z-\langle{z_*, z}\rangle z)]\tilde{\rho}(t,z_*)d\sigma_{z_*},\\
\tilde{\rho}(0,z)=\tilde{\rho}_0(z).
\end{cases}
\end{align} 
In the following proposition, we clarify the solution splitting property of \eqref{D-1} more precisely. 
\begin{proposition}  \label{P4.1}
\emph{(Solution splitting property)}
Let $\rho$ be a smooth solution to \eqref{D-1}. If we set
\begin{equation} \label{D-2-1}
\tilde{\rho}(t,z)=\rho(t,e^{\Omega t}z),
\end{equation}
then $\tilde{\rho}$ satisfies \eqref{D-2}.
\end{proposition}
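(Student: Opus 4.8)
The plan is to exploit that the free flow generated by the skew-Hermitian matrix $\Omega$ is a rigid rotation which leaves invariant every structure appearing in \eqref{D-1}. Writing $U(t):=e^{\Omega t}$, the relation $\Omega^\dagger=-\Omega$ gives $U(t)^\dagger=e^{-\Omega t}=U(t)^{-1}$, so $U(t)$ is unitary; consequently it preserves the Hermitian product $\langle\cdot,\cdot\rangle$, the real dot product $\cdot$ (via Lemma \ref{L3.1} together with the fact that $\iota$ carries unitary maps to orthogonal maps on $\bbr^{2d}$), the manifold $\mathbb{HS}^{d-1}$, and the surface measure $d\sigma_z$ (Definition \ref{D3.2}). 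Moreover $\dot U=\Omega U=U\Omega$, and since $z\cdot(\Omega z)=\mathrm{Re}\langle z,\Omega z\rangle=0$ (the quantity $\langle z,\Omega z\rangle$ being purely imaginary for skew-Hermitian $\Omega$), the vector $\Omega z$ lies in $T_z\mathbb{HS}^{d-1}$. I split the flux as $L[\rho](z)=\Omega z+\mathcal{I}[\rho](z)$, where $\mathcal{I}[\rho](z):=\int_{\mathbb{HS}^{d-1}}[\kappa_0(z_*-\langle z_*,z\rangle z)+\kappa_1(\langle z,z_*\rangle z-\langle z_*,z\rangle z)]\rho(t,z_*)\,d\sigma_{z_*}$; note the flux in \eqref{D-2} is exactly $\tilde L[\tilde\rho]=\mathcal{I}[\tilde\rho]$. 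I will verify the weak (integration-by-parts) form, so that only changes of variables and Proposition \ref{P3.1} are needed.

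The first key step is the equivariance identity $\mathcal{I}[\rho](Uz)=U\,\mathcal{I}[\tilde\rho](z)$. It follows by substituting $z_*=Uw$ in the integral defining $\mathcal{I}[\rho](Uz)$: invariance of $d\sigma$ turns $d\sigma_{z_*}$ into $d\sigma_w$, the relation $\rho(t,Uw)=\tilde\rho(t,w)$ comes from \eqref{D-2-1}, unitarity collapses $\langle Uw,Uz\rangle=\langle w,z\rangle$ and $\langle Uz,Uw\rangle=\langle z,w\rangle$, and the common factor $U$ is pulled out of the bracket. Thus the interaction term is equivariant under the rotation, whereas the free-flow term $\Omega z$ will be seen to cancel against the time derivative of the change of variables.

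For the cancellation, fix a test function $\varphi\in C^1(\mathbb{HS}^{d-1})$ and change variables $y=U(t)z$ to write $\int\varphi(z)\tilde\rho(t,z)\,d\sigma_z=\int\varphi(U(t)^\dagger y)\rho(t,y)\,d\sigma_y$, then differentiate in $t$. The time derivative yields two pieces: one from $\frac{d}{dt}\varphi(U^\dagger y)=\nabla\varphi(U^\dagger y)\cdot(-\Omega U^\dagger y)$, using $\dot{U^\dagger}=-\Omega U^\dagger$ and tangency of $\Omega U^\dagger y$; and one from $\partial_t\rho$, which I replace by the weak form of \eqref{D-1}, namely $\int\psi\,\partial_t\rho\,d\sigma=\int\nabla\psi\cdot L[\rho]\,\rho\,d\sigma$ from Proposition \ref{P3.1}, with $\psi=\varphi\circ U^\dagger$. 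Here the chain rule gives $\nabla_y[\varphi(U^\dagger y)]=U\,\nabla\varphi(U^\dagger y)$, and with $L[\rho](y)=\Omega y+U\mathcal{I}[\tilde\rho](U^\dagger y)$, the commutation $U\Omega U^\dagger=\Omega$, and the $U$-invariance of $\cdot$, the free-flow contribution becomes $\nabla\varphi(U^\dagger y)\cdot(\Omega U^\dagger y)$. This exactly cancels the $-\nabla\varphi(U^\dagger y)\cdot(\Omega U^\dagger y)$ term, leaving $\int\nabla\varphi(U^\dagger y)\cdot\mathcal{I}[\tilde\rho](U^\dagger y)\,\rho(t,y)\,d\sigma_y$. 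Changing back via $z=U^\dagger y$ recovers $\int\nabla_z\varphi(z)\cdot\tilde L[\tilde\rho](z)\,\tilde\rho(t,z)\,d\sigma_z$, which by Proposition \ref{P3.1} is precisely the weak form of \eqref{D-2}.

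The main obstacle is the manifold bookkeeping of the covariant derivatives under the rotation: one must justify, through the identification $\iota$ and Definition \ref{D3.2}, that $U$ maps $T_z\mathbb{HS}^{d-1}$ onto $T_{Uz}\mathbb{HS}^{d-1}$, that the covariant gradient transforms as $\nabla_y[\varphi\circ U^\dagger]=U\,(\nabla\varphi)\circ U^\dagger$, and that $\Omega z$ is genuinely tangent so $\frac{d}{dt}\varphi(U^\dagger y)$ is an intrinsic directional derivative. Once these transformation rules are secured, the remaining algebra is routine and the argument reduces to the cancellation above. Alternatively, one may work directly with the smooth density by differentiating $\tilde\rho(t,z)=\rho(t,U(t)z)$ and transforming the divergence $\nabla_z\cdot(\tilde L[\tilde\rho]\tilde\rho)$ under the rotation; this needs the same transformation rules together with the product rule of Lemma \ref{L3.3}.
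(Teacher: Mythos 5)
Your argument is correct, and it takes a genuinely different (weak-form) route to the same cancellation. The paper works entirely in the strong form: it differentiates $\tilde{\rho}(t,w)=\rho(t,e^{\Omega t}w)$ by the chain rule, derives $\tilde{L}[\tilde{\rho}](w)=e^{-\Omega t}L[\rho](t,e^{\Omega t}w)-\Omega w$ using exactly the unitarity/equivariance facts you isolate, and then computes the divergence of the flux pointwise --- which forces it to verify separately that $\nabla_w\cdot(\Omega w)=2\,\mathrm{Re}\,\mathrm{tr}(\Omega)=0$ for skew-Hermitian $\Omega$ (its display \eqref{D-2-4}). Your test-function formulation replaces that explicit divergence computation by an exact cancellation of $\pm\,\nabla\varphi(U^\dagger y)\cdot(\Omega U^\dagger y)$ under the integral sign, at the price of the transformation rule $\nabla_y[\varphi\circ U^\dagger]=U\,(\nabla\varphi)\circ U^\dagger$ for the covariant gradient, which you flag as the main obstacle. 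That rule is routine: through $\iota$ a unitary map on $\bbc^d$ becomes an orthogonal map on $\bbr^{2d}$, the ambient chain rule gives $\nabla(\varphi\circ U^\dagger)=U\nabla\varphi\circ U^\dagger$, and $U$ commutes with the projection $\mathbb{P}_{\mathbf{x}^\perp}$ onto the rotated tangent space; the paper needs a comparable implicit fact when it asserts in \eqref{D-2-3} that the divergence of the rotated flux is the rotated divergence. Both proofs rest on the same three pillars --- unitarity of $e^{\Omega t}$, invariance of $d\sigma_z$, and equivariance of the interaction integral $\mathcal{I}[\rho](Uz)=U\mathcal{I}[\tilde{\rho}](z)$ --- but yours packages them distributionally, which avoids the trace computation and would extend verbatim to measure-valued solutions, while the paper's stays self-contained at the classical level. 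To make your write-up complete you need only supply the two-line verification of the gradient transformation rule and note that, since $\varphi$ is arbitrary and $\tilde{\rho}$ is smooth, Proposition \ref{P3.1} upgrades the weak identity back to the pointwise equation \eqref{D-2}.
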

\begin{proof}
It follows from \eqref{D-2-1} that 
\[
\rho(t,z)=\tilde{\rho}(t, e^{-\Omega t}z).
\]
Recall that $\rho$ satisfies
\begin{align*}
\begin{cases}
\displaystyle \partial_t \rho+\nabla_z \cdot(L[\rho]\rho)=0, \quad t>0, \quad z \in \mathbb{HS}^{d-1}, \\
\displaystyle L[\rho](z)=\Omega z+\int_{\mathbb{HS}^{d-1}}[\kappa_0(z_*-\langle{z_*, z}\rangle z)+\kappa_1(\langle{z, z_*}\rangle z-\langle{z_*, z}\rangle z)]\rho(t,z_*)d\sigma_{z_*}.
\end{cases}
\end{align*}
Then, we use \eqref{D-2-1} to see
\begin{align}
\begin{aligned} \label{D-2-2}
\partial_t \tilde{\rho}(t, w) &= \frac{d}{dt}\left( \rho(t,e^{\Omega t}w)\right)=\left(\Omega e^{\Omega t}w\right)\cdot \nabla_z \rho(t,e^{\Omega t}w)+\partial_t\rho(t,e^{\Omega t}w), \\
\tilde{L}[\tilde{\rho}](w) &=\int_{\mathbb{HS}^{d-1}}[\kappa_0(w_*-\langle{w_*, w}\rangle w)+\kappa_1(\langle{w, w_*}\rangle w-\langle{w_*, w}\rangle w)]\tilde{\rho}(t,w_*)d\sigma_{w_*}\\
&=\int_{\mathbb{HS}^{d-1}}[\kappa_0(w_*-\langle{w_*, w}\rangle w)+\kappa_1(\langle{w, w_*}\rangle w-\langle{w_*, w}\rangle w)]\rho(t,e^{\Omega t}w_*)d\sigma_{w_*}\\
&=e^{-\Omega t}\int_{\mathbb{HS}^{d-1}} \Big[\kappa_0(e^{\Omega t}w_*-\langle{e^{\Omega t}w_*, e^{\Omega t}w}\rangle e^{\Omega t}w) \\
& \hspace{0.2cm} +\kappa_1(\langle{e^{\Omega t}w, e^{\Omega t}w_*}\rangle e^{\Omega t}w-\langle{e^{\Omega t}w_*, e^{\Omega t}w}\rangle e^{\Omega t}w) \Big ]\rho(t,e^{\Omega t}w_*)d\sigma_{w_*}.
\end{aligned}
\end{align}
Now, we use \eqref{D-2-2} and the fact that  $e^{\Omega t}$ is a rotational operator which preserves the surface area  (i.e. roughly $d\sigma_{w_*}=dS_{e^{\Omega t} w_*}$)  to obtain
\begin{align*}
e^{\Omega t}\tilde{L}[\tilde{\rho}](e^{-\Omega t}w)&=\int_{\mathbb{HS}^{d-1}}[\kappa_0(w_*-\langle{w_*, w}\rangle w)+\kappa_1(\langle{w, w_*}\rangle w-\langle{w_*, w}\rangle w)]\rho(t,w_*)d\sigma_{w_*},\\
&=L[\rho](t,w)-\Omega w
\end{align*}
to see
\[ \tilde{L}[\tilde{\rho}](w)=e^{-\Omega t}L[\rho](t,e^{\Omega t} w)-\Omega w. \]
This implies 
\[ \tilde{L}[\tilde{\rho}]\tilde{\rho}(t,w)=e^{-\Omega t}L[\rho]\rho(t,e^{\Omega t} w)- \rho(t,e^{\Omega t} w)\Omega w. \]
Now we have
\begin{align} 
\begin{aligned} \label{D-2-3}
\nabla_z\cdot (\tilde{L}[\tilde{\rho}]\tilde{\rho})(t,w) &=\nabla_z\cdot\left(L[\rho]\rho\right)(t,e^{\Omega t} w) \\
&-(\Omega e^{\Omega t}w)\cdot \nabla_z\rho(t, e^{\Omega t} w)-\rho(t,e^{\Omega t}w)\left(\nabla_w\cdot \Omega w\right).
\end{aligned}
\end{align}
From $w=x+\mathrm{i}y$ and simple calculations, it follows that
\begin{align}
\begin{aligned} \label{D-2-4}
\nabla_w\cdot \Omega w&=\nabla_x\cdot \mathrm{Re}(\Omega w)+\nabla_y\cdot \mathrm{Im}(\Omega w)\\
&=\frac{\partial}{\partial x_\alpha}\mathrm{Re}\left(\Omega_{\alpha\beta}w_\beta\right)+
\frac{\partial}{\partial y_\alpha}\mathrm{Im}\left(\Omega_{\alpha\beta}w_\beta\right)
\\
&=\frac{\partial}{\partial x_\alpha}\mathrm{Re}\left(\Omega_{\alpha\beta}x_\beta\right)+
\frac{\partial}{\partial y_\alpha}\mathrm{Im}\left(\mathrm{i}\Omega_{\alpha\beta}y_\beta\right)\\
&=\frac{\partial}{\partial x_\alpha}\mathrm{Re}\left(\Omega_{\alpha\beta}x_\beta\right)+
\frac{\partial}{\partial y_\alpha}\mathrm{Re}\left(\Omega_{\alpha\beta}y_\beta\right)\\
&=\mathrm{Re}(\Omega_{\alpha\beta}\delta_{\alpha\beta})+\mathrm{Re}(\Omega_{\alpha\beta}\delta_{\alpha\beta})=2\mathrm{Re}(\Omega_{\alpha\alpha})=2\mathrm{tr}(\Omega)=0.
\end{aligned}
\end{align}
Here we used the fact that $\Omega$ is skew-Hermitian. \newline

If we combine \eqref{D-2-3} and \eqref{D-2-4}, we have
\begin{align*}
\nabla_z\cdot (\tilde{L}[\tilde{\rho}]\tilde{\rho})(t,w)=\nabla_z\cdot\left(L[\rho]\rho\right)(t,e^{\Omega t} w)-(\Omega e^{\Omega t}w)\cdot \nabla_z\rho(t,e^{\Omega t} w).
\end{align*}
Finally, we have the following relation:
\begin{align*}
&\partial_t \tilde{\rho}(t,w)+\nabla_z \cdot(\tilde{L}[\tilde{\rho}]\tilde{\rho})(t,w)\\
& \hspace{0.5cm} =\left(\Omega e^{\Omega t}w\right)\cdot \nabla_z \rho(t,e^{\Omega t}w)+\partial_t\rho(t,e^{\Omega t}w) \\
& \hspace{0.5cm} +\nabla_z\cdot\left(L[\rho]\rho\right)(t,e^{\Omega t} w)-(\Omega e^{\Omega t}w)\cdot \nabla_z\rho(t,e^{\Omega t} w)\\
& \hspace{0.5cm} =\partial_t\rho(t,e^{\Omega t}w)+\nabla_z\cdot\left(L[\rho]\rho\right)(t,e^{\Omega t} w)=0.
\end{align*}
In the last equality, we used the fact that $\rho$ is a solution of \eqref{D-1}.
\end{proof}
\subsection{Order parameter} \label{sec:4.2}
Recall the definition of the order parameter $R$:
\begin{equation} \label{D-2-5}
R(t) := | J_\rho | = \left|\int_{\Xi}z f(t,z, \Omega) d\sigma_zd\Omega\right|=\left|\int_{\mathbb{HS}^{d-1}}z\rho(t,z)d\sigma_z\right|.
\end{equation}
In the rest of this subsection, for notational simplicity, we suppress $t$ dependence in $R, f$ and $\rho$:
\[ R := R(t), \quad f(z) := f(t,z), \quad \rho(z) := \rho(t,z). \]
Then, we see that 
\begin{align*}
R^2 &= \left|\int_{\mathbb{HS}^{d-1}}z\rho(z)d\sigma_z\right|^2
=\left\langle \int_{\mathbb{HS}^{d-1}}z_1\rho(z_1)d\sigma_{z_1} , \int_{\mathbb{HS}^{d-1}}z_2\rho(z_2)d\sigma_{z_2}\right\rangle\\
&=\iint_{(\mathbb{HS}^{d-1})^2}\langle z_1, z_2\rangle \rho(z_1)\rho(z_2)d\sigma_{z_1}d\sigma_{z_2}.
\end{align*}
In what follows, we will show 
\[ \frac{dR^2}{dt} \geq 0 \quad \mbox{and} \quad \sup_{0 \leq t < \infty} \Big| \frac{d^2 R^2}{dt^2}  \Big | < \infty. \]
Then, these estimates yield the desired estimate in Theorem \ref{T4.1}. Thanks to the solution splitting property in Proposition \ref{P4.1},  we may assume $\Omega=0$ without loss of generality.  Thus, $\rho$ satisfies  \begin{align}\label{D-3}
\begin{cases}
\partial_t \rho+\nabla_z \cdot(\rho L[\rho])=0, \quad z \in \mathbb{HS}^{d-1}, \quad t>0,\\
L[\rho](z)=\displaystyle\int_{ \mathbb{HS}^{d-1}}[\kappa_0(z_*-\langle{z_*, z}\rangle z)+\kappa_1(\langle{z, z_*}\rangle z-\langle{z_*, z}\rangle z)]\rho(z_*)d\sigma_{z_*}.
\end{cases}
\end{align}
In what follows, we present three elementary lemmas for the proof of Theorem \ref{T4.1}. 
\begin{lemma}\label{L4.1}
Let $z$ and $w$ be complex vectors in $\mathbb{C}^{d}$. Then, the functional introduced in \eqref{A-3} satisfies
\[
\langle z, w \rangle = z \cdot w - \mathrm{i} (z \cdot (\mathrm{i}w)),
\]
where $\mathrm{i}z := (\mathrm{i} z^1,\cdots, \mathrm{i} z^d) \in \bbc^d$.
\end{lemma}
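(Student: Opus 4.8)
The plan is to reduce everything to the single observation that the real-valued pairing $z\cdot w$ is nothing but the real part of the sesquilinear form $\langle z,w\rangle$, and then to recover the imaginary part by feeding $\mathrm{i}w$ into that same identity. Concretely, I would first record the elementary fact that
\[
z\cdot w=\mathrm{Re}\,\langle z,w\rangle\qquad\text{for all }z,w\in\bbc^d,
\]
which is immediate from comparing the definitions in \eqref{A-3}: since $\overline{z^i}w^i=(\mathrm{Re}\,z^i-\mathrm{i}\,\mathrm{Im}\,z^i)(\mathrm{Re}\,w^i+\mathrm{i}\,\mathrm{Im}\,w^i)$, its real part is exactly $\mathrm{Re}\,z^i\,\mathrm{Re}\,w^i+\mathrm{Im}\,z^i\,\mathrm{Im}\,w^i$, and summing over $i$ gives $z\cdot w$ on the left and $\mathrm{Re}\,\langle z,w\rangle$ on the right.

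Next I would exploit that $\langle\cdot,\cdot\rangle$ is linear in its second slot (the conjugate sits on the first argument in \eqref{A-3}), so that $\langle z,\mathrm{i}w\rangle=\mathrm{i}\,\langle z,w\rangle$. Applying the real-part identity above with $\mathrm{i}w$ in place of $w$ then yields
\[
z\cdot(\mathrm{i}w)=\mathrm{Re}\,\langle z,\mathrm{i}w\rangle=\mathrm{Re}\big(\mathrm{i}\,\langle z,w\rangle\big)=-\,\mathrm{Im}\,\langle z,w\rangle .
\]
Finally, reconstructing the complex number $\langle z,w\rangle$ from its real and imaginary parts gives
\[
\langle z,w\rangle=\mathrm{Re}\,\langle z,w\rangle+\mathrm{i}\,\mathrm{Im}\,\langle z,w\rangle=z\cdot w-\mathrm{i}\,\big(z\cdot(\mathrm{i}w)\big),
\]
which is the claimed identity.

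Honestly, there is no genuine obstacle here; the statement is a bookkeeping identity relating the Hermitian form to its underlying real inner product, and the argument is a two-line computation once the real-part identity is isolated. The only point demanding care is the placement of the complex conjugate and the attendant sign conventions: one must keep in mind that the conjugate in \eqref{A-3} acts on the \emph{first} argument, so that $\langle\cdot,\cdot\rangle$ is linear (not conjugate-linear) in the second argument, which is precisely what makes $\langle z,\mathrm{i}w\rangle=\mathrm{i}\langle z,w\rangle$ and produces the minus sign in front of $\mathrm{i}\,(z\cdot(\mathrm{i}w))$. Alternatively, the entire statement can be verified by the brute-force expansion in components using the notation $z^i=\mathrm{Re}\,z^i+\mathrm{i}\,\mathrm{Im}\,z^i$ and collecting real and imaginary parts, which avoids invoking sesquilinearity altogether but is slightly more tedious.
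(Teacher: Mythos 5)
Your argument is correct. It proves the same two elementary facts that underlie the paper's proof, namely $z\cdot w=\mathrm{Re}\,\langle z,w\rangle$ and $z\cdot(\mathrm{i}w)=-\mathrm{Im}\,\langle z,w\rangle$, but it organizes them differently: the paper establishes both in a single component-wise expansion of $\sum_k \overline{z^k}w^k$, using $\mathrm{Re}(\mathrm{i}w^k)=-\mathrm{Im}\,w^k$ and $\mathrm{Im}(\mathrm{i}w^k)=\mathrm{Re}\,w^k$ to recognize the imaginary sum as $-\mathrm{i}\,(z\cdot(\mathrm{i}w))$, whereas you isolate the real-part identity once and then derive the imaginary-part identity from it via linearity of $\langle\cdot,\cdot\rangle$ in the second slot. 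Your route is marginally cleaner in that the sign bookkeeping is concentrated in the single observation $\mathrm{Re}(\mathrm{i}\zeta)=-\mathrm{Im}\,\zeta$, and the real-part identity $z\cdot w=\mathrm{Re}\,\langle z,w\rangle$ you record is reusable elsewhere (it is implicitly invoked in Lemma \ref{L4.2} and the order-parameter computations); the paper's version has the mild advantage of being a single self-contained chain of equalities requiring no appeal to sesquilinearity. Both are complete and correct.
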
 

\begin{proof}
By definitions of \eqref{A-3}, one has
\begin{align*}
\langle z, w\rangle&=\sum_{k=1}^d \overline{z^k} w^k =\sum_{k=1}^d\left(\mathrm{Re} z^k -\mathrm{i}\mathrm{Im} z^k \right)\left(\mathrm{Re} w^k+\mathrm{i}\mathrm{Im} w^k \right)\\
&=\sum_{k=1}^d\left(\mathrm{Re} z^k \mathrm{Re} w^k + \mathrm{Im} z^k \mathrm{Im}w^k \right)
+\mathrm{i}\sum_{k=1}^d\left(\mathrm{Re} z^k \mathrm{Im} w^k -\mathrm{Im} z^k \mathrm{Re} w^k \right)\\
&=\sum_{k=1}^d\left(\mathrm{Re} z^k \mathrm{Re} w^k + \mathrm{Im} z^k \mathrm{Im}w^k \right)
-\mathrm{i}\sum_{k=1}^d\left(\mathrm{Re} z^k \mathrm{Re} (\mathrm{i}w^k) +\mathrm{Im} z^k \mathrm{Im} (\mathrm{i} w^k) \right)\\
&=z\cdot w-\mathrm{i}(z\cdot (\mathrm{i}w)).
\end{align*} 
\end{proof}
\begin{definition}  \label{D4.2}
For $z \in \mathbb{HS}^{d-1}\subset\mathbb{C}^d$ and $v \in \bbc^d$, we define three $\bbc^d$-valued maps on $\bbc^d$ as follows.
\begin{enumerate}
\item Define a map $Q_z : \mathbb{C}^{d} \rightarrow \mathbb{C}^{d}$ by 
\[
Q_z(v) := \kappa_0(v - \langle v, z \rangle z) + \kappa_1(\langle z, v \rangle - \langle v, z \rangle)z,\quad \forall~v\in\bbc^d.
\]  
\item Define linear operators $\mathbb{P}_z$ and $\mathbb{P}_{z^\perp}: \mathbb{C}^{d} \to \mathbb{C}^{d} $ by
\[
\mathbb{P}_z(v) := (z \cdot v) z, \quad \mathbb{P}_{z^\perp}(v) := v - (z \cdot v)z.
\] 
\end{enumerate}
\end{definition}
In the following lemma, we study basic properties of the maps introduced in Definition \ref{D4.2}. 
\begin{lemma} \label{L4.2}
For $z \in \mathbb{HS}^{d-1}\subset\mathbb{C}^d$ and $v \in \bbc^d$, one has 
\begin{eqnarray*}
&& (i)~\mathbb{P}_z^2=\mathbb{P}_z\quad \mathbb{P}_{z^\perp}^2=\mathbb{P}_{z^\perp}. \\
&& (ii)~\mathbb{P}_z=\iota^{-1}\circ \mathbb{P}_{\iota(z)}\circ \iota,\quad \mathbb{P}_{z^\perp}=\iota^{-1}\circ \mathbb{P}_{\iota(z)^\perp}\circ \iota. \\
&& (iii)~ L[\rho] = Q_z(J_\rho), \quad  Q_z = \kappa_0 \mathbb{P}_{z^\perp} + (\kappa_0 + 2\kappa_1)\mathbb{P}_{\mathrm{i}z},
\end{eqnarray*}
where $\mathbb{P}_{\iota(z)}$ and $\mathbb{P}_{\iota(z)^\perp}$ are projection operators defined on $\bbr^{2d}$.
\end{lemma}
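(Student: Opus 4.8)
The plan is to establish the three assertions in order, reducing everything to the real inner product `$\cdot$' from \eqref{A-3}, the canonical identification $\iota$, and the translation formula of Lemma \ref{L4.1}, while treating the operator identity in (iii) as the main computation. Throughout I would exploit that for $z\in\mathbb{HS}^{d-1}$ one has $z\cdot z=\|z\|^2=1$ and that $z\cdot v$ is a \emph{real} scalar.

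For (i), the idempotency of $\mathbb{P}_z$ is a one-line computation: since $z\cdot v$ is real and $z\cdot z=1$, we get $\mathbb{P}_z^2(v)=(z\cdot v)(z\cdot z)z=(z\cdot v)z=\mathbb{P}_z(v)$. For the complementary projection I would avoid recomputing from the definition and instead write $\mathbb{P}_{z^\perp}=\mathrm{id}-\mathbb{P}_z$, so that $\mathbb{P}_{z^\perp}^2=\mathrm{id}-2\mathbb{P}_z+\mathbb{P}_z^2=\mathrm{id}-\mathbb{P}_z=\mathbb{P}_{z^\perp}$. For (ii), the key fact is that $\iota$ is $\bbr$-linear and, by Definition \ref{D3.1}, $z\cdot v=\iota(z)\cdot\iota(v)$; because the scalar $z\cdot v$ is real, $\iota$ commutes with multiplication by it, giving $\iota(\mathbb{P}_z(v))=(z\cdot v)\iota(z)=(\iota(z)\cdot\iota(v))\iota(z)=\mathbb{P}_{\iota(z)}(\iota(v))$, which is precisely $\mathbb{P}_z=\iota^{-1}\circ\mathbb{P}_{\iota(z)}\circ\iota$. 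The identity for $\mathbb{P}_{z^\perp}$ then follows the same way (or by subtracting from $\mathrm{id}$ and using $\iota^{-1}\circ\mathrm{id}\circ\iota=\mathrm{id}$). I would emphasize that the realness of $z\cdot v$ is exactly what lets $\iota$ intertwine the two projections.

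For (iii) I would split into the two claimed identities. The relation $L[\rho]=Q_z(J_\rho)$ follows by substituting $J_\rho=\int z_*\rho\,d\sigma_{z_*}$ into the definition of $Q_z$ in Definition \ref{D4.2} and pulling the integral inside, using linearity of integration and of $\langle\cdot,\cdot\rangle$ in each slot; here the conjugate in the first slot is absorbed by the reality of $\rho$, so that $\int\langle z_*,z\rangle\rho\,d\sigma_{z_*}=\langle J_\rho,z\rangle$ and $\int\langle z,z_*\rangle\rho\,d\sigma_{z_*}=\langle z,J_\rho\rangle$, and the result matches the definition of $L[\rho]$ in \eqref{D-3} term by term.

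The operator identity $Q_z=\kappa_0\mathbb{P}_{z^\perp}+(\kappa_0+2\kappa_1)\mathbb{P}_{\mathrm{i}z}$ is the substantive step and the main obstacle. The plan is to apply Lemma \ref{L4.1} to rewrite $\langle v,z\rangle=z\cdot v-\mathrm{i}\big((\mathrm{i}z)\cdot v\big)$ and $\langle z,v\rangle=z\cdot v+\mathrm{i}\big((\mathrm{i}z)\cdot v\big)$, where the opposite sign in the second formula comes from Lemma \ref{L3.1}(i) applied to $z\cdot(\mathrm{i}v)=-(\mathrm{i}z)\cdot v$ together with the symmetry of `$\cdot$'. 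Substituting these into $Q_z$, the term $\kappa_0(v-\langle v,z\rangle z)$ produces $\kappa_0\mathbb{P}_{z^\perp}(v)$ plus a leftover $\kappa_0\,\mathrm{i}\big((\mathrm{i}z)\cdot v\big)z$, while the rotational term collapses via $\langle z,v\rangle-\langle v,z\rangle=2\mathrm{i}\big((\mathrm{i}z)\cdot v\big)$; collecting the two contributions yields the coefficient $\kappa_0+2\kappa_1$ in front of $\mathrm{i}\big((\mathrm{i}z)\cdot v\big)z$. The proof concludes by observing $\mathbb{P}_{\mathrm{i}z}(v)=\big((\mathrm{i}z)\cdot v\big)(\mathrm{i}z)=\mathrm{i}\big((\mathrm{i}z)\cdot v\big)z$, again because the scalar is real. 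The only delicate point throughout is careful bookkeeping of the factors of $\mathrm{i}$ and of the real scalars, which is where I expect any sign error to creep in.
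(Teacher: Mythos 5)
Your proposal is correct and follows essentially the same route as the paper: direct idempotency computations for (i) (your use of $\mathbb{P}_{z^\perp}=\mathrm{id}-\mathbb{P}_z$ is a trivial variant of the paper's direct expansion), the $\bbr$-linearity of $\iota$ together with the realness of $z\cdot v$ for (ii), and for (iii) the substitution of $J_\rho$ into $Q_z$ plus the rewriting of $\langle v,z\rangle$ and $\langle z,v\rangle$ via Lemma \ref{L4.1} and Lemma \ref{L3.1}(i), with correct bookkeeping of the signs and the final identification $\mathbb{P}_{\mathrm{i}z}(v)=\mathrm{i}\big((\mathrm{i}z)\cdot v\big)z$.
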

\begin{proof}
(i) The first two estimates follow directly from Definition \ref{D4.2}:
\begin{align*}
\begin{aligned}
\mathbb{P}_z^2(v) &= \mathbb{P}_z((z\cdot v)z)= (z\cdot (z\cdot v)z)z
=(z \cdot v)z = \mathbb{P}_z(v), \\
\mathbb{P}_{z^\perp}^2(v) &= \mathbb{P}_{z^\perp}(v - (z \cdot v)z) 
= v - (z \cdot v)z - (z \cdot (v - (z \cdot v)z)z \\
&= v - (z \cdot v)z - (z \cdot v - (z \cdot v))z
= v - (z \cdot v)z = \mathbb{P}_{z^\perp}(v).
\end{aligned}
\end{align*}

\noindent (ii) Recall that $\iota$ is an inclusion map defined by
\begin{eqnarray*}
\iota: \bbc^d\to \bbr^{2d},\quad
(z^1,\cdots, z^d)\mapsto (\mathrm{Re}~z^1, \mathrm{Im}~z^1, \cdots, \mathrm{Re}~z^d, \mathrm{Im}~z^d).
\end{eqnarray*}
From the definition of $\iota$, we get the following identity for any $v \in \mathbb{C}^d$,
\[
\iota^{-1}\circ \mathbb{P}_{\iota(z)}\circ \iota(v) = \iota^{-1}\bigg((\iota(z) \cdot \iota(v))\iota(z)\bigg) =(\iota(z) \cdot \iota(v))\iota^{-1}(\iota(z))
= (z \cdot v)z = \mathbb{P}_z (v).
\]
Similarly, one has 
\begin{align*}
\iota^{-1}\circ \mathbb{P}_{\iota(z)^\perp}\circ \iota(v) &= \iota^{-1}\bigg(\iota(v) - (\iota(z) \cdot \iota(v))\iota(z)\bigg)\\
&= \iota^{-1}(\iota(v)) - (\iota(z) \cdot \iota(v))\iota^{-1}(\iota(z))
=v - (z \cdot v)z = \mathbb{P}_{z^{\perp}}(v).
\end{align*}
Thus we have the desired result. \newline

\noindent (iii)~First, we substitute $J_\rho$ in \eqref{D-1-2} into $Q_z$ to see
\begin{align*}
Q_z(J_\rho ) & = \kappa_0(J_\rho - \langle J_\rho , z \rangle z) + \kappa_1(\langle z, J_\rho  \rangle - \langle J_\rho , z \rangle)z\\
&= \kappa_0\left(\int_{\mathbb{HS}^{d-1}}z_*\rho(z_*)d\sigma_{z_*} - \left\langle \int_{\mathbb{HS}^{d-1}}z_*\rho(z_*)d\sigma_{z_*}, z \right\rangle z\right)\\
&\hspace{0.2cm} + \kappa_1\left(\left\langle z, \int_{\mathbb{HS}^{d-1}}z_*\rho(z_*)d\sigma_{z_*} \right\rangle - \left\langle \int_{\mathbb{HS}^{d-1}}z_*\rho(z_*)d\sigma_{z_*}, z \right\rangle\right)z\\
&= \int_{\mathbb{HS}^{d-1}}[\kappa_0(z_*-\langle{z_*, z}\rangle z)+\kappa_1(\langle{z, z_*}\rangle z-\langle{z_*, z}\rangle z)]\rho(z_*)d\sigma_{z_*} \\
&= L[\rho](z).
\end{align*}

\noindent For the last estimate, we use Lemma \ref{L4.1} to get that for $v \in \mathbb{C}^d$, 
\begin{align*}
Q_z(v) &= \kappa_0(v - \langle v, z \rangle z) + \kappa_1(\langle z, v \rangle - \langle v, z \rangle)z \\
&= \kappa_0(v -(v \cdot z)z - \mathrm{i}(\mathrm{i}v \cdot z)z) + \kappa_1\mathrm{i}(\mathrm{i}z \cdot v - \mathrm{i}v \cdot z)z \\
&= \kappa_0(v -(v \cdot z)z) + (\kappa_0 + 2\kappa_1)(\mathrm{i}z \cdot v)\mathrm{i}z  \\
&=\kappa_0 \mathbb{P}_{z^\perp}(v) + (\kappa_0 + 2\kappa_1)\mathbb{P}_{\mathrm{i}z}(v).
\end{align*}
\end{proof}
\begin{lemma}\label{L4.3}
Let $\rho$ be a solution to system \eqref{D-1}. Then, we have
\[
\frac{d}{dt}\int_{\mathbb{HS}^{d-1}} \phi(z)\rho(z)dz = \int_{\mathbb{HS}^{d-1}} \nabla_z \phi(z) \cdot Q_z(J_\rho ) \rho(z)dz,
\]
for $\phi \in \mathcal{C}^1(\mathbb{HS}^{d-1})$.
\end{lemma}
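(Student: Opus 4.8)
The plan is to combine the continuity equation with the integration-by-parts formula of Proposition~\ref{P3.1}. By the solution splitting property (Proposition~\ref{P4.1}) we may assume $\Omega=0$, so that $\rho$ solves the reduced system \eqref{D-3} and, by Lemma~\ref{L4.2}(iii), the nonlocal flux is $L[\rho](z)=Q_z(J_\rho)$.

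First I would differentiate under the integral sign: since $\mathbb{HS}^{d-1}$ is compact and $\rho$ is a smooth solution, the integrand $\phi(z)\rho(z)$ and its $t$-derivative are bounded uniformly on $\mathbb{HS}^{d-1}$, which justifies
\[
\frac{d}{dt}\int_{\mathbb{HS}^{d-1}}\phi(z)\rho(z)\,d\sigma_z=\int_{\mathbb{HS}^{d-1}}\phi(z)\,\partial_t\rho(z)\,d\sigma_z.
\]
Next I would substitute $\partial_t\rho=-\nabla_z\cdot(L[\rho]\rho)$ from \eqref{D-3}, turning the right-hand side into $-\int_{\mathbb{HS}^{d-1}}\phi\,\nabla_z\cdot(L[\rho]\rho)\,d\sigma_z$. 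The key step is then to apply Proposition~\ref{P3.1} with the real-valued test function $\phi$ and the complex vector field $F=L[\rho]\rho$, which transfers the derivative onto $\phi$ and produces no boundary term because $\mathbb{HS}^{d-1}$ is closed:
\[
-\int_{\mathbb{HS}^{d-1}}\phi\,\nabla_z\cdot(L[\rho]\rho)\,d\sigma_z=\int_{\mathbb{HS}^{d-1}}\nabla_z\phi\cdot(L[\rho]\rho)\,d\sigma_z.
\]
Finally, inserting $L[\rho]=Q_z(J_\rho)$ from Lemma~\ref{L4.2}(iii) and pulling the scalar $\rho$ out of the dot product yields exactly the claimed identity.

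Two points need care. The interchange of differentiation and integration and the applicability of Proposition~\ref{P3.1} both rest on the regularity of the smooth solution $\rho$ and on $L[\rho]\rho$ being a genuine tangent vector field on $\mathbb{HS}^{d-1}$; the latter holds since $Q_z=\kappa_0\mathbb{P}_{z^\perp}+(\kappa_0+2\kappa_1)\mathbb{P}_{\mathrm{i}z}$ maps into the tangent space at $z$, using $(\mathrm{i}z)\cdot z=0$ (a consequence of Lemma~\ref{L3.1}(i) together with the symmetry of $\cdot$). Strictly, Proposition~\ref{P3.1} is phrased for smooth $\phi$, so for $\phi\in\mathcal{C}^1(\mathbb{HS}^{d-1})$ one should either observe that its proof uses only a single derivative of $\phi$ or approximate $\phi$ by smooth functions and pass to the limit. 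I expect this regularity bookkeeping to be the only genuine obstacle; the algebraic content is immediate once the flux is identified through Lemma~\ref{L4.2}(iii).
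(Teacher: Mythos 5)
Your proposal is correct and follows essentially the same route as the paper: differentiate under the integral, substitute the continuity equation with the flux identified as $Q_z(J_\rho)$ via Lemma \ref{L4.2}(iii), and apply the integration-by-parts formula of Proposition \ref{P3.1}. Your explicit invocation of the solution splitting property and the tangency of $Q_z(J_\rho)$ is slightly more careful bookkeeping than the paper's version, but the argument is the same.
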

\begin{proof}
By Lemma \ref{L4.2} (i), the continuity equation \eqref{D-1} becomes 
\[
\partial_t \rho+\nabla_z \cdot(\rho L[\rho]) = \partial_t \rho+\nabla_z \cdot(\rho Q_z(J_\rho )) = 0. 
\] 
We multiply $\phi$ and use integration by parts to find
\begin{align*}
 \int_{\mathbb{HS}^{d-1}}\big(\partial_t \rho +\nabla_z \cdot(\rho Q_z(J_\rho ))\big)\phi(z)dz & = 0
\end{align*}
or equivalently
\begin{align*}
\int_{\mathbb{HS}^{d-1}}\partial_t \rho(z) \phi(z)dz &= -\int_{\mathbb{HS}^{d-1}}\big(\nabla_z \cdot(\rho Q_z(J_\rho ))\big)\phi(z)dz. 
\end{align*}
By Proposition \ref{P3.1}, we can also simplify R.H.S. as follows:
\begin{align*}
 -\int_{\mathbb{HS}^{d-1}}\big(\nabla_z \cdot(\rho Q_z(J_\rho ))\big)\phi(z)dz&= \int_{\mathbb{HS}^{d-1}}\nabla_z(\phi(z))\cdot (\rho Q_z(J_\rho )) dz.
\end{align*}
Finally, we get the desired estimate:
\[
\frac{d}{dt}\int_{\mathbb{HS}^{d-1}} \phi(z)\rho(z)dz =\int_{\mathbb{HS}^{d-1}}\partial_t \rho(z) \phi(z)dz = \int_{\mathbb{HS}^{d-1}} \nabla_z \phi(z) \cdot Q_z(J_\rho ) \rho(z)dz.
\]
\end{proof}
\begin{remark}\label{R4.2}
Let $\rho$ be a solution to system \eqref{D-3}. Then, for any test function $\phi \in \mathcal{C}^1(\bbr\times \mathbb{HS}^{d-1})$, we have
\[
\int_{\mathbb{HS}^{d-1}} \phi(z)\partial_t\rho(z)dz = \int_{\mathbb{HS}^{d-1}} \nabla_z \phi(z) \cdot Q_z(J_\rho ) \rho(z)dz.
\]
\end{remark}
In next lemma, we provide a uniform bound for the second derivative of $R^2$. 
\begin{lemma}\label{L4.4}
Let $\rho$ be a solution to \eqref{D-3}. Then, we have
\begin{align*}
\begin{aligned}
& (i)~\frac{dR^2}{dt}=2\kappa_0 \int_{\mathbb{HS}^{d-1}}(\|J_\rho\|^2-|z\cdot J_\rho|^2)\rho(z)d\sigma_z\\
&\hspace{4cm} +2(\kappa_0 + 2\kappa_1)\int_{\mathbb{HS}^{d-1}}|(\mathrm{i}z)\cdot J_\rho|^2\rho(z)d\sigma_z. \\
& (ii)~\sup_{0 \leq t < \infty} \Big| \frac{d^2R^2}{dt^2} \Big| < \infty. 
\end{aligned}
\end{align*}
\end{lemma}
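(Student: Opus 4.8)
\emph{Proof proposal.}

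For part (i), the plan is to recognize $R^2=\|J_\rho\|^2=\langle J_\rho,J_\rho\rangle$ and, since this quantity is real, to rewrite it via Lemma \ref{L4.1} as the real pairing $R^2=J_\rho\cdot J_\rho$. To differentiate, I would first fix a constant vector $e\in\bbc^d$ and apply Lemma \ref{L4.3} to the linear test function $\phi(z)=z\cdot e$; by Lemma \ref{L3.2}(ii) one has $\nabla_z\phi(z)=e-(e\cdot z)z=\mathbb{P}_{z^\perp}(e)$, and since $\int_{\mathbb{HS}^{d-1}}(z\cdot e)\rho\,d\sigma_z=J_\rho\cdot e$, this yields $\dot J_\rho\cdot e=\int_{\mathbb{HS}^{d-1}}\mathbb{P}_{z^\perp}(e)\cdot Q_z(J_\rho)\,\rho\,d\sigma_z$ for every fixed $e$. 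By bilinearity and symmetry of the real dot product, $\frac{dR^2}{dt}=2\,\dot J_\rho\cdot J_\rho$, so evaluating the previous identity at $e=J_\rho(t)$ gives
\[
\frac{dR^2}{dt}=2\int_{\mathbb{HS}^{d-1}}\mathbb{P}_{z^\perp}(J_\rho)\cdot Q_z(J_\rho)\,\rho(z)\,d\sigma_z.
\]

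Next I would insert the decomposition $Q_z=\kappa_0\mathbb{P}_{z^\perp}+(\kappa_0+2\kappa_1)\mathbb{P}_{\mathrm{i}z}$ from Lemma \ref{L4.2}(iii) and compute the two resulting real pairings using $\|z\|=1$ together with the orthogonality $z\cdot(\mathrm{i}z)=0$ (a consequence of Lemma \ref{L3.1}(i) and the symmetry of $\cdot$). The self-pairing collapses to $\mathbb{P}_{z^\perp}(J_\rho)\cdot\mathbb{P}_{z^\perp}(J_\rho)=\|J_\rho\|^2-|z\cdot J_\rho|^2$, while the cross term simplifies to $\mathbb{P}_{z^\perp}(J_\rho)\cdot\mathbb{P}_{\mathrm{i}z}(J_\rho)=|(\mathrm{i}z)\cdot J_\rho|^2$ (here $z\cdot J_\rho$ is real, so squares and moduli coincide). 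Substituting these two identities produces exactly the stated formula in (i).

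For part (ii), my plan is to reduce the problem to uniform bounds on $\|J_\rho\|$, $\|\dot J_\rho\|$ and $\|\ddot J_\rho\|$, since differentiating $\frac{dR^2}{dt}=2\dot J_\rho\cdot J_\rho$ once more gives $\frac{d^2R^2}{dt^2}=2\,\ddot J_\rho\cdot J_\rho+2\,\|\dot J_\rho\|^2$. The bound $\|J_\rho\|\le1$ is immediate because $\rho$ is a probability measure supported on $\mathbb{HS}^{d-1}$. For $\dot J_\rho$, I note that $\mathbb{P}_{z^\perp}$ and $\mathbb{P}_{\mathrm{i}z}$ are orthogonal projections on $\bbr^{2d}$ by Lemma \ref{L4.2}(ii), hence norm-nonincreasing, so $\|Q_z(J_\rho)\|\le(|\kappa_0|+|\kappa_0+2\kappa_1|)\|J_\rho\|$; feeding this and $\|\mathbb{P}_{z^\perp}(e)\|\le\|e\|$ into the identity for $\dot J_\rho\cdot e$ and optimizing over unit $e$ yields a time-independent bound on $\|\dot J_\rho\|$.

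The main obstacle is the bound on $\ddot J_\rho$, which I would obtain by differentiating $\dot J_\rho\cdot e=\int_{\mathbb{HS}^{d-1}}g(t,z)\rho\,d\sigma_z$ with $g(t,z):=\mathbb{P}_{z^\perp}(e)\cdot Q_z(J_\rho)$. The integrand carries time dependence both explicitly through $J_\rho$ inside $Q_z$ and implicitly through $\rho$. The explicit part contributes $\int_{\mathbb{HS}^{d-1}}\mathbb{P}_{z^\perp}(e)\cdot Q_z(\dot J_\rho)\,\rho\,d\sigma_z$, controlled by the already-established bound on $\|\dot J_\rho\|$. The implicit part is handled by the weak formulation in Remark \ref{R4.2}, which converts $\int g\,\partial_t\rho\,d\sigma_z$ into $\int\nabla_z g\cdot Q_z(J_\rho)\,\rho\,d\sigma_z$. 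The delicate point is to control $\nabla_z g$ uniformly in $t$: since $g(t,\cdot)$ is a smooth function on the compact manifold $\mathbb{HS}^{d-1}$ whose coefficients are polynomial in the components of the bounded vectors $J_\rho$ and $e$, its covariant gradient is bounded by a constant depending only on $\kappa_0,\kappa_1$ and $\|J_\rho\|\le1$, uniformly in $t$. Combining this with $\|Q_z(J_\rho)\|\le|\kappa_0|+|\kappa_0+2\kappa_1|$ and the unit mass of $\rho$ yields a time-independent bound on $\ddot J_\rho\cdot e$ for every unit $e$, hence on $\|\ddot J_\rho\|$, which completes (ii).
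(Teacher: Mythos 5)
Your proposal is correct and follows essentially the same route as the paper: for (i), testing against $\phi(z)=z\cdot e$, using $\nabla_z(z\cdot e)=\mathbb{P}_{z^\perp}(e)$, setting $e=J_\rho$, and splitting $Q_z=\kappa_0\mathbb{P}_{z^\perp}+(\kappa_0+2\kappa_1)\mathbb{P}_{\mathrm{i}z}$; for (ii), writing $\tfrac{d^2R^2}{dt^2}=2\|\dot J_\rho\|^2+2J_\rho\cdot\ddot J_\rho$, bounding $\dot J_\rho$ by the contractivity of the projections, and splitting the time derivative of $\dot J_\rho\cdot e$ into the explicit $J_\rho$-dependence and the $\partial_t\rho$ part handled via the weak formulation with a uniform bound on the covariant gradient of the integrand. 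This matches the argument given in Section 4.2 and Appendix D of the paper.
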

\begin{proof}
\noindent (i)~By \eqref{D-2-5}, one has
\[
R^2=J_\rho \cdot J_\rho,\quad\text{where}\quad J_\rho =\int_{\mathbb{HS}^{d-1}}z\rho(z)d\sigma_z.
\]
\noindent $\bullet$~(Estimate of $\frac{dJ_\rho}{dt}$):~for a fixed $e\in\bbc^d$, we set
\[
L_e(z)=e\cdot z.
\]
Then, it follows from Lemma \ref{L3.2} that 
\[
\nabla_z L_e(z)=e-(e\cdot z)z=\mathbb{P}_{z^\perp}(e).
\]
Then we have
\begin{align*}
e\cdot \frac{dJ_\rho}{dt} &=\frac{d}{dt} \int_{\mathbb{HS}^{d-1}}(e\cdot z)\rho(z)d\sigma_z
=\frac{d}{dt} \int_{\mathbb{HS}^{d-1}}L_e(z)\rho(z)d\sigma_z\\
&= \int_{\mathbb{HS}^{d-1}}(\nabla_z L_e(z))\cdot Q_z(J_\rho)\rho(z)d\sigma_z=\int_{\mathbb{HS}^{d-1}}\mathbb{P}_{z^\perp}(e)\cdot Q_z(J_\rho)\rho(z)d\sigma_z,
\end{align*}
where we used Lemma \ref{L4.2} in the third equality. From this result and the fact 
\[
Q_z(J_\rho)= \kappa_0 \mathbb{P}_{z^\perp}(J_\rho) + (\kappa_0 + 2\kappa_1)\mathbb{P}_{\mathrm{i}z}(J_\rho),
\]
we get
\begin{align}
\begin{aligned}\label{est order}
e\cdot \frac{d J_\rho}{dt}  &= \int_{\mathbb{HS}^{d-1}}\mathbb{P}_{z^\perp}(e) \cdot (\kappa_0 \mathbb{P}_{z^\perp}(J_\rho ) + (\kappa_0 + 2\kappa_1)\mathbb{P}_{\mathrm{i}z}(J_\rho ))\rho(z)d\sigma_z\\
&= \int_{\mathbb{HS}^{d-1}}\mathbb{P}_{z^\perp}(e) \cdot \kappa_0 \mathbb{P}_{z^\perp}(J_\rho )\rho(z)d\sigma_z + \int_{\mathbb{HS}^{d-1}}\mathbb{P}_{z^\perp}(e) \cdot (\kappa_0 + 2\kappa_1)\mathbb{P}_{\mathrm{i}z}(J_\rho )\rho(z)d\sigma_z\\
&= \kappa_0 \int_{\mathbb{HS}^{d-1}}\mathbb{P}_{z^\perp}(e) \cdot \mathbb{P}_{z^\perp}(J_\rho )\rho(z)d\sigma_z + (\kappa_0 + 2\kappa_1)\int_{\mathbb{HS}^{d-1}}\mathbb{P}_{z^\perp}(e) \cdot \mathbb{P}_{\mathrm{i}z}(J_\rho )\rho(z)d\sigma_z.
\end{aligned}
\end{align}
Since $e$ is arbitrary, we substitute $e=J_\rho$ to obtain
\begin{align*}
\begin{aligned}
J_\rho\cdot \frac{d J_\rho}{dt}&= \underbrace{\kappa_0 \int_{\mathbb{HS}^{d-1}}\mathbb{P}_{z^\perp}(J_\rho) \cdot \mathbb{P}_{z^\perp}(J_\rho )\rho(z)d\sigma_z}_{=:\mathcal{J}_1} + \underbrace{(\kappa_0 + 2\kappa_1)\int_{\mathbb{HS}^{d-1}}\mathbb{P}_{z^\perp}(J_\rho) \cdot \mathbb{P}_{\mathrm{i}z}(J_\rho )\rho(z)d\sigma_z}_{=:\mathcal{J}_2}.
\end{aligned}
\end{align*}
Next, we estimate $\mathcal{J}_1$ and $\mathcal{J}_2$ separately.\\

\noindent$\diamond$ (Estimate of $\mathcal{J}_1$): By direct calculations, we have
\begin{align*}
\mathcal{J}_1&=\kappa_0 \int_{\mathbb{HS}^{d-1}}\mathbb{P}_{z^\perp}(J_\rho) \cdot \mathbb{P}_{z^\perp}(J_\rho )\rho(z)d\sigma_z\\
&=\kappa_0 \int_{\mathbb{HS}^{d-1}}\|J_\rho-(z\cdot J_\rho)z\|^2\rho(z)d\sigma_z
=\kappa_0 \int_{\mathbb{HS}^{d-1}}(\|J_\rho\|^2-|z\cdot J_\rho|^2)\rho(z)d\sigma_z,
\end{align*}
where we use the fact that $z\cdot J_\rho$ is a real number.

\vspace{0.5cm}

\noindent$\diamond$ (Estimate of $\mathcal{J}_2$): First, we simplify the part of an integrand in $\mathcal{J}_2$ as follows. Since $z\cdot w$ is real number for all $z, w\in\bbc^d$, we have
\begin{align*}
\mathbb{P}_{z^\perp}(J_\rho) \cdot \mathbb{P}_{\mathrm{i}z}(J_\rho )&=(J_\rho-(z\cdot J_\rho)z) \cdot(((\mathrm{i}z)\cdot J_\rho)(\mathrm{i}z))\\
&=J_\rho\cdot (((\mathrm{i}z)\cdot J_\rho)\mathrm{i}z)-((z\cdot J_\rho)z)\cdot(((\mathrm{i}z)\cdot J_\rho)(\mathrm{i}z))\\
&=((\mathrm{i}z)\cdot J_\rho)((\mathrm{i}z)\cdot J_\rho)-(z\cdot J_\rho)((\mathrm{i}z)\cdot J_\rho)(z\cdot(\mathrm{i}z))\\
&=|(\mathrm{i}z)\cdot J_\rho|^2,
\end{align*}
where we used $z\cdot(\mathrm{i}z)=0$ in the last equality. Thus we have
\begin{align*}
\mathcal{J}_2&=(\kappa_0 + 2\kappa_1)\int_{\mathbb{HS}^{d-1}}|(\mathrm{i}z)\cdot J_\rho|^2\rho(z)d\sigma_z.
\end{align*}
We combine all the estimates for $\mathcal{J}_1$ and $\mathcal{J}_2$ to get
\[
J_\rho\cdot \frac{dJ_\rho}{dt} =\kappa_0 \int_{\mathbb{HS}^{d-1}}( \|J_\rho\|^2-|z\cdot J_\rho|^2)\rho(z)d\sigma_z+(\kappa_0 + 2\kappa_1)\int_{\mathbb{HS}^{d-1}}|(\mathrm{i}z)\cdot J_\rho|^2\rho(z)d\sigma_z.
\]
This yields 
\[
\frac{dR^2}{dt}=2\kappa_0 \int_{\mathbb{HS}^{d-1}}(\|J_\rho\|^2-|z\cdot J_\rho|^2)\rho(z)d\sigma_z+2(\kappa_0 + 2\kappa_1)\int_{\mathbb{HS}^{d-1}}|(\mathrm{i}z)\cdot J_\rho|^2\rho(z)d\sigma_z \geq 0,
\]
where we used $|z\cdot J_\rho|^2 \leq |z|^2 \cdot \|J_\rho \|^2 = \|J_\rho \|^2. $

\vspace{0.5cm}

\noindent (ii)~We leave its proof in Appendix \ref{App-D}.
\end{proof}
\begin{remark}
By Lemma 4.1 of \cite{H-P2}, if $\{z_j\}$ is a global solution to the following system:
\[
\dot{z}_j=\frac{\kappa_0}{N}\sum_{k=1}^N(z_k-\langle z_k, z_j\rangle z_j)+\frac{\kappa_1}{N}\sum_{k=1}^N(\langle z_j, z_k\rangle-\langle z_k, z_j\rangle)z_j,
\]
then we have
\[
\frac{d}{dt}\|z_c\|^2=\frac{2\kappa_0}{N}\sum_{i=1}^N(\|z_c\|^2-|\langle z_i, z_c\rangle|^2)+\frac{4(\kappa_0+\kappa_1)}{N}\sum_{i=1}^N|\mathrm{Im}\langle z_i, z_c\rangle|^2,
\]
where $z_c=\frac{1}{N}\sum_{j=1}^Nz_j$. This and the relation
\[
|\langle z_i, z_c\rangle|^2=|\mathrm{Re}\langle z_i, z_c\rangle|^2+|\mathrm{Im}\langle z_i, z_c\rangle|^2,
\]
imply the desired result:
\[
\frac{d}{dt}\|z_c\|^2=\frac{2\kappa_0}{N}\sum_{i=1}^N(\|z_c\|^2-|\mathrm{Re}\langle z_i, z_c\rangle|^2)+\frac{2(\kappa_0+2\kappa_1)}{N}\sum_{i=1}^N|\mathrm{Im}\langle z_i, z_c\rangle|^2.
\]
\end{remark}
\subsection{Proof of Theorem \ref{T4.1}} \label{sec:4.3}
It follows from Lemma \ref{L4.3} and Lemma \ref{L4.4} that $R^2 = \|J_\rho\|^2$ is increasing and its second derivative is uniformly bounded. Then we can apply Barbalat's lemma \cite{Ba} to obtain
\[
\lim_{t\to\infty}\frac{d}{dt}\|J_{\rho}\|^2=0.
\]
Note that $\frac{d}{dt}\|J_\rho\|^2$ can be expressed as follows:
\[
\frac{d\|J_\rho\|^2}{dt} =2\kappa_0 \int_{\mathbb{HS}^{d-1}}(\|J_\rho\|^2-|z\cdot J_\rho|^2)\rho(z)d\sigma_z+2(\kappa_0 + 2\kappa_1)\int_{\mathbb{HS}^{d-1}}|(\mathrm{i}z)\cdot J_\rho|^2\rho(z)d\sigma_z.
\]
Since
\[
2\kappa_0 \int_{\mathbb{HS}^{d-1}}(\|J_\rho\|^2-|z\cdot J_\rho|^2)\rho(t,z)d\sigma_z \geq0, \quad 
2(\kappa_0 + 2\kappa_1)\int_{\mathbb{HS}^{d-1}}|(\mathrm{i}z)\cdot J_\rho|^2\rho(t,z)d\sigma_z \geq0,
\]
we have the desired estimate:
\[
\lim_{t\to\infty} \int_{\mathbb{HS}^{d-1}}( \|J_\rho\|^2-|z\cdot J_\rho|^2)\rho(z)d\sigma_z=0.
\]

\vspace{0.5cm}

As a corollary of Theorem \ref{T4.1}, one has complete aggregation.

\begin{corollary}[Emergence of complete aggregation]  Suppose system parameters and initial measure satisfy 
\[
|\kappa_1| < \frac{\kappa_0}{2},\quad 0\leq\sup_{z, w \in \mathrm{supp}(\mu_0)}|1-\langle z, w\rangle| < 1 - \frac{2|\kappa_1|}{\kappa_0}-\delta,\quad  \Omega_j\equiv \Omega,
\]
for some positive constant $\delta$, and let $\rho$ be a solution to \eqref{D-3}. Then there exists a curve $z(t)$ on $\mathbb{HS}^{d-1}$ such that for the probability measure $\rho(t) d\sigma_z$ and the dirac measure $\delta_{z(t)}$, we have
\[
\lim_{t\to\infty}W_2(\rho(t) d\sigma_z, \delta_{z(t)}) = 0.
\]
\end{corollary}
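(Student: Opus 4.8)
The plan is to reduce the convergence $W_2(\rho_t\,d\sigma_z,\delta_{z(t)})\to0$ to the single scalar statement that the order parameter $R(t)$ defined in \eqref{D-2-5} tends to $1$, and then to obtain $R(t)\to1$ by combining the uniform-in-time mean-field limit of Theorem \ref{T3.1} with the $N$-uniform exponential aggregation of Theorem \ref{T2.3}. Note at the outset that Theorem \ref{T4.1} by itself only forces every $z\in\mathrm{supp}(\rho)$ to become parallel to $J_\rho$, i.e.\ a bipolar \emph{or} unipolar configuration; the concentration hypothesis on $\mathrm{supp}(\mu_0)$ is precisely what excludes the antipodal cluster, and it will enter the argument through a lower bound on $R$.

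First I would construct the limiting curve. Set $S:=\sup_{z,w\in\mathrm{supp}(\rho_0)}|1-\langle z,w\rangle|<1-\frac{2|\kappa_1|}{\kappa_0}-\delta<1$. Using $\|z_1-z_2\|^2=2\mathrm{Re}(1-\langle z_1,z_2\rangle)$ together with the representation $R^2=\iint \mathrm{Re}\langle z_1,z_2\rangle\,\rho(z_1)\rho(z_2)\,d\sigma_{z_1}d\sigma_{z_2}$ from Section \ref{sec:4.2}, one gets $R^2(0)\geq1-S>0$; since $R^2$ is nondecreasing by Lemma \ref{L4.4}(i), $R(t)\geq\sqrt{1-S}>0$ for all $t$. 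Hence $z(t):=J_\rho(t)/R(t)\in\mathbb{HS}^{d-1}$ is well defined, and by weak continuity of $\rho_t$ it is a continuous curve. The crux is the algebraic identity that transport to a Dirac mass is forced, so $W_2^2(\rho_t\,d\sigma_z,\delta_{z(t)})=\int_{\mathbb{HS}^{d-1}}\|z-z(t)\|^2\rho_t\,d\sigma_z=2-2\,J_\rho\cdot z(t)=2(1-R(t))$, where I used $\|z\|=\|z(t)\|=1$ and $J_\rho\cdot z(t)=(J_\rho\cdot J_\rho)/R=R$. It therefore remains only to show $R(t)\to1$.

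Next I would prove $R(t)\to1$. By Theorem \ref{T3.1}(2) with $\Omega\equiv0$, projected to the $z$-marginal (which is nonexpansive in $W_2$ since $\pi$ is $1$-Lipschitz), there are empirical measures $\rho_t^N=\frac1N\sum_i\delta_{z_i(t)}$ with $\sup_{t\geq0}W_2(\rho_t\,d\sigma_z,\rho_t^N)\to0$ as $N\to\infty$. Choosing the initializing particles inside the compact set $\mathrm{supp}(\rho_0)$, as in the proof of Theorem \ref{T3.1}, guarantees $\max_{k,l}|1-\langle z_k^0,z_l^0\rangle|\leq S$ for every $N$, so Theorem \ref{T2.3} yields $\mathcal{G}^N(t)=\max_{i,j}\|z_i(t)-z_j(t)\|\leq2\sqrt{S}\,e^{-\kappa_0\delta t}$ with rate independent of $N$. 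Consequently $R^N(t)=\|z_c(t)\|$ obeys $(R^N(t))^2=\frac1{N^2}\sum_{i,j}\mathrm{Re}\langle z_i,z_j\rangle\geq1-\tfrac12(\mathcal{G}^N(t))^2\geq1-2S\,e^{-2\kappa_0\delta t}$. Since $z\mapsto e\cdot z$ is $1$-Lipschitz for every unit $e$, Kantorovich duality gives $|R(t)-R^N(t)|\leq|J_\rho-z_c|\leq W_2(\rho_t\,d\sigma_z,\rho_t^N)$. Fixing $t$ and letting $N\to\infty$ kills the mean-field error and leaves $R(t)\geq\sqrt{1-2S\,e^{-2\kappa_0\delta t}}$; letting $t\to\infty$, and recalling $R(t)\leq1$, forces $R(t)\to1$, whence $W_2(\rho_t\,d\sigma_z,\delta_{z(t)})=\sqrt{2(1-R(t))}\to0$.

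The main obstacle I anticipate is sequencing the two limits ($N\to\infty$ at fixed $t$, then $t\to\infty$) so that the mean-field error is genuinely uniform in time: this is exactly where the \emph{uniform}-in-time conclusion of Theorem \ref{T3.1}(2), rather than the merely finite-time Corollary \ref{C3.1}, and the $N$-independence of the aggregation rate in Theorem \ref{T2.3} are both indispensable. Two supporting points need care but should be routine: that the empirical approximants of $\mu_0$ may be taken with support inside $\mathrm{supp}(\rho_0)$ so the hypothesis of Theorem \ref{T2.3} holds with a single constant $S$, and that the passage from the mean-field limit on $\Xi$ to the $z$-marginal is nonexpansive in $W_2$. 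With these in hand, the concentration hypothesis enters solely through $R^2(0)\geq1-S$ (well-definedness of $z(t)$) and the $N$-uniform decay $\mathcal{G}^N(t)\le 2\sqrt S\,e^{-\kappa_0\delta t}$ (ruling out the antipodal mass), completing the reduction to $R(t)\to1$.
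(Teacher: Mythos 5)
Your proposal is correct, and it shares the paper's backbone --- the uniform-in-time mean-field limit of Theorem \ref{T3.1}(2) combined with the $N$-independent exponential aggregation of Theorem \ref{T2.3}, with the approximating particles initialized inside $\mathrm{supp}(\mu_0)$ --- but the final concentration step is genuinely different. The paper transfers the particle-level concentration to $\mu_t$ via the triangle inequality $W_2(\mu_t,\delta_{z^N(t)})\le W_2(\mu_t,\mu_t^N)+W_2(\mu_t^N,\delta_{z^N(t)})$, obtaining $\limsup_{t\to\infty}W_2(\mu_t,\delta_{z^N(t)})\le\varepsilon$ for each $\varepsilon$, and then rules out a bipolar weak limit $m\delta_{z(t)}+(1-m)\delta_{-z(t)}$, $0<m<1$, by an explicit computation of $W_2^2(\delta_{z^N(t)},m\delta_{z(t)}+(1-m)\delta_{-z(t)})\ge 4\min\{m,1-m\}$ and a contradiction; the limiting curve is thus identified only implicitly through the particle paths $z^N(t)$. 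You instead name the curve explicitly as $z(t)=J_\rho(t)/R(t)$ (well defined because $R^2(0)\ge 1-S>0$ and $R^2$ is nondecreasing by Lemma \ref{L4.4}), reduce everything to the scalar claim $R(t)\to1$ via the exact identity $W_2^2(\rho_t\,d\sigma_z,\delta_{z(t)})=2(1-R(t))$ (valid since the only coupling with a Dirac mass is the product coupling), and prove $R(t)\to1$ by pushing the particle lower bound $(R^N)^2\ge 1-\tfrac12(\mathcal{G}^N)^2$ through the $1$-Lipschitz test functions $z\mapsto e\cdot z$. What your route buys is an explicit limit curve and, in fact, a quantitative rate --- after sending $N\to\infty$ you get $1-R(t)\le 2Se^{-2\kappa_0\delta t}$ for large $t$, hence $W_2(\rho_t\,d\sigma_z,\delta_{z(t)})\le 2\sqrt{S}\,e^{-\kappa_0\delta t}$ eventually --- and it bypasses any appeal to the bipolar dichotomy of Theorem \ref{T4.1}; the paper's route avoids introducing the order parameter at this stage but must separately exclude the antipodal cluster. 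Two cosmetic points: your written bound $R(t)\ge\sqrt{1-2Se^{-2\kappa_0\delta t}}$ only makes sense once the radicand is nonnegative (harmless for the $t\to\infty$ limit), and for the step ``fix $t$, let $N\to\infty$'' the finite-time mean-field limit would already suffice, though the uniform-in-time version you invoke is of course also adequate and is what makes the rate statement uniform.
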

\begin{proof}
For notational simplicity, we take $\mu_t$ as $\rho(t) d\sigma_z$. Then, by the same procedure as in the proof of the second part of Proposition \ref{T3.1}, there exists a sequence of empirical probability measures $\{\mu^{N}\}_{N\in\mathbb{N}}$ such that 
\begin{align}
\begin{aligned} \label{p1}
& \lim_{N\to\infty}W_2(\mu_t, \mu_t^{N}) = 0, \quad t>0, \quad and \quad \mathrm{supp}(\mu^N_0) \subset \mathrm{supp}(\mu_0),\\
& \mu \text{ is derivable from } \mu^N \text{ in } [0,\infty) \text{ with respect to } W_2-\text{metric.} 
\end{aligned}
\end{align}
By \eqref{p1}, for any $\varepsilon > 0$, there exists a positive integer $N$ such that 
\[
W_2(\mu_t, \mu_t^{n}) < \varepsilon \quad \text{ for } \quad n \geq N, \text{ independent of } t \in [0,\infty).
\]
Again, by the a priori condition, for each $N$, the empirical measure corresponds to $N$-particles and the dynamics of $N$-particles following the LHS model exhibits the complete aggregation: there exists path $z^N:[0, \infty)\to\mathbb{HS}^{d-1}$ which satisfies 
\[
\lim_{t\to\infty}\|z_i(t)-z^N(t)\|=0\quad\text{for all}~i\in\mathcal{N}, \quad \text{or equivalently} \quad \lim_{t\to\infty}W_2(\mu^N(t), \delta_{z^N(t)})=0,
\]
where $Z=\{z_i\}$ be a solution of the LHS model. We use the triangle inequality to find
\[
W_2(\mu_t, \delta_{z^N(t)}) \leq W_2(\mu_t, \mu_t^N) + W_2(\mu_t^N, \delta_{z^N(t)}) < \varepsilon + W_2(\mu_t^N, \delta_{z^N(t)}).
\]
Hence, we get
\begin{align}\label{D-10}
\limsup_{t\to\infty}W_2(\mu_t, \delta_{z^N(t)}) \leq \varepsilon.
\end{align}
We prove the statement via the proof by contradiction. Suppose that $\mu_t$ weakly converges to the measure $m \delta_{z(t)} + (1-m) \delta_{-z(t)}$ for $0<m<1$. Then we have
\begin{align*}
W_2(\mu_t, m \delta_{z(t)} + (1-m) \delta_{-z(t)}) &\geq W_2(\delta_{z_N(t)}, m \delta_{z(t)} + (1-m) \delta_{-z(t)}) - W_2(\delta_{z_N(t)}, \mu_t).
\end{align*}
Since the only measure which takes $\delta_x$ and $m\delta_y + (1-m)\delta_z$ as marginals is 
\[
m\delta_x \otimes \delta_y + (1-m)\delta_x \otimes \delta_z,
\]
we have
\begin{align*}
\begin{aligned}
& W^2_2(\delta_{z^N(t)}, m \delta_{z(t)} + (1-m) \delta_{-z(t)}) \\
& \hspace{0.5cm} = m\| z^N(t) - z(t) \|^2 + (1-m)\| z^N(t) - \big(-z(t)\big) \|^2 \\
& \hspace{0.5cm}  = \|z^N(t)\|^2+\|z(t)\|^2+2\mathrm{Re}((1-m)\langle z^N(t), z(t) \rangle - m\langle z^N(t), z(t) \rangle) \\
& \hspace{0.5cm} = 2 + 2(1-2m)\mathrm{Re}(\langle z^N(t), z(t) \rangle) \geq 2-2|1-2m| = 4\min\{m, 1-m\}.
\end{aligned}
\end{align*}
Therefore, for any $0<m<1$, we first choose $\varepsilon$ to satisfy
\[
0<\varepsilon<2 \sqrt{\min\{m, 1-m\}}.
\]
Then, there exists sufficiently large $N$ satisfying \eqref{D-10}, so that
\[
2\sqrt{\min\{m, 1-m\}}\leq \limsup_{t\to\infty}W_2(\mu(t), \delta_{z^N(t)})\leq \varepsilon,
\]
which gives a contradiction. So we can conclude either $m=0$ or $m=1$. Therefore, a bi-polar state cannot emerge.
\end{proof}

\section{Conclusion} \label{sec:5}
\setcounter{equation}{0} 
In this paper, we have studied the well-posedness and emergent behaviors of the kinetic mean-field model for the LHS model which corresponds to the complex analogue of the LS model for aggregation. The LS model is the first-order aggregation model on the unit sphere in Euclidean space. The LHS model also reduces to the LS model on the unit sphere in complex Euclidean space. The kinetic LHS model can be formally obtained via the BBGKY hierarchy from the LHS model. In this work, we have discussed three issues. First, we presented exponential aggregation estimates to the LHS model with relaxed coupling strengths for some restricted class of initial data. Second, we provided a global-in-time well-posedness of measure-valued solutions to the kinetic LHS model using the particle-in-cell method and uniform-stability estimate. Third, we presented the emergent dynamics of the kinetic LHS model. In general, the particle-in-cell method provides a measure-valued solution in any finite-time interval for generic initial data. However, this type of mean-field limit cannot be extended to the whole time interval due to the lack of a suitable uniform stability estimate. As long as the initial data and system parameters satisfy some admissible conditions, we can see that the uniform stability estimate follows, and the kinetic LHS model can be derivable uniformly in time from the LHS model. Of course, there are several issues to be discussed in future work. For example, in our work, the uniform stability estimate was obtained for some admissible class of initial data and system parameters. Thus, the extension of uniform stability to a relaxed setting will remain an interesting problem. In addition, our emergent dynamics in Section \ref{sec:4} has been studied only for the homogeneous ensemble with the same natural frequency matrices. Clearly, extension to a heterogeneous ensemble will be interesting to pursue in future work.

\appendix

\section{Proof of Theorem \ref{T2.3}}\label{App-A}
\setcounter{equation}{0}
It follows from the solution splitting property of the LHS model that we can set $\Omega\equiv 0$ without loss of generality, and $z_j$ satisfies 
\begin{equation} \label{XX-0-1}
\begin{cases}
\dot{z}_j= \kappa_0 \Big (\langle{z_j, z_j}\rangle z_c-\langle{z_c, z_j}\rangle z_j \Big )+\kappa_1 \Big (\langle{z_j, z_c}\rangle-\langle{z_c, z_j}\rangle \Big )z_j, \quad t > 0, \\
z_j(0)=z_j^0, \quad  \|z^0_j \| = 1, \quad j \in {\mathcal N}.
\end{cases}
\end{equation}
Next, we introduce two-point correlation like functionals:
\begin{equation} \label{XX-0-2}
h_{ij} := \langle z_i, z_j \rangle, \quad R_{ij} = \mathrm{Re}(h_{ij}), \quad  I_{ij} = \mathrm{Im}(h_{ij}), \quad i, j \in {\mathcal N} \cup \{c \}.
\end{equation}
Here, the index $c$ stands for the centroid of the ensemble. Then, we can rewrite system $\eqref{XX-0-1}_1$ using \eqref{XX-0-2} as 
\begin{align}
\begin{aligned}\label{XX-1}
\dot{z}_i 
&= \frac{\kappa_0}{N}\sum_{k=1}^N (z_k-\langle z_k, z_i \rangle z_i)+ \frac{\kappa_1}{N}\sum_{k=1}^N (\langle z_i, z_k \rangle - \langle z_k, z_i \rangle)z_i
\\ &= \kappa_0(z_c-h_{ci} z_i)+ \kappa_1(h_{ic} - h_{ci})z_i = \kappa_0\big(z_c - (R_{ci} + \mathrm{i}I_{ci})z_i\big) + 2\mathrm{i}\kappa_1 I_{ic} z_i 
\\ &= \kappa_0 z_c - \big( \kappa_0 R_{ci} + \mathrm{i} (\kappa_0 + 2\kappa_1 )I_{ci} \big)z_i.
\end{aligned}
\end{align}
Here, we used the relation $I_{ci}=-I_{ic}$ in the last equality. Again, we use \eqref{XX-1} to derive the time derivative of $h_{ij}$:
\begin{align} 
\begin{aligned} \label{XX-2}
\dot{h}_{ij} &= \langle \dot{z}_i, z_j \rangle + \langle z_i, \dot{z}_j \rangle
\\ &= \left\langle \kappa_0 z_c - \big( \kappa_0 R_{ci} + \mathrm{i} (\kappa_0 + 2\kappa_1 )I_{ci} \big)z_i, z_j \right\rangle + \left\langle z_i, \kappa_0 z_c - \big( \kappa_0 R_{cj} + \mathrm{i} (\kappa_0 + 2\kappa_1) I_{cj} \big)z_j \right\rangle
\\ &= \kappa_0 h_{cj} - \big(\kappa_0 R_{ci} - \mathrm{i} (\kappa_0 + 2\kappa_1) I_{ci}\big)h_{ij}
+ \kappa_0 h_{ic} - \big( \kappa_0 R_{cj} + \mathrm{i} (\kappa_0 + 2\kappa_1) I_{cj} \big) h_{ij}
\\ &= \kappa_0 (h_{cj} + h_{ic} - R_{ci}h_{ij} - R_{cj}h_{ij})
+ \mathrm{i}(\kappa_0+ 2\kappa_1)(I_{ci}-I_{cj})h_{ij}.
\end{aligned} 
\end{align}
Now, we take the real and imaginary parts of  \eqref{XX-2} to find
\begin{align}
\begin{aligned}\label{XX-3}
\dot{R}_{ij} &= \kappa_0 (R_{cj} + R_{ic} - R_{ci}R_{ij} - R_{cj}R_{ij})
- (\kappa_0+ 2\kappa_1)(I_{ci}-I_{cj})I_{ij} 
\\ &= \kappa_0 \big((R_{jc} + R_{ic})(1 - R_{ij})\big)
- (\kappa_0+ 2\kappa_1)(I_{ci}-I_{cj})I_{ij}, \\
\dot{I}_{ij} &= \kappa_0 (I_{cj} + I_{ic} - R_{ci}I_{ij} - R_{cj}I_{ij})
+ (\kappa_0+ 2\kappa_1)(I_{ci}-I_{cj})R_{ij} 
\\ &= (I_{cj} + I_{ic})\big(\kappa_0 - (\kappa_0 + 2\kappa_1) R_{ij} \big)
- \kappa_0 I_{ij}(R_{ci} + R_{cj}).
\end{aligned}
\end{align}
Next, we note that if complete aggregation occurs, then one has 
\[ z_i - z_j \to 0 \quad \mbox{as $t \to \infty$} \quad \mbox{hence} \quad h_{ij} \to 1 \quad \mbox{as $t \to \infty$}, \] 
or equivalently, 
\[   (R_{ij}, I_{ij}) \to (1,0), \quad \mbox{as $t \to \infty$}. \]
Thus, it is convenient to work with the following functional $J_{ij}$ :
\[
J_{ij} := 1 - R_{ij}.
\]
We substitute $R_{ij}=1-J_{ij}$ into relations \eqref{XX-3}  to get
\begin{align*}
\dot{J}_{ij} &= -\dot{R}_{ij}
= -\kappa_0 (2-J_{jc} - J_{ic})J_{ij} + (\kappa_0+ 2\kappa_1)(I_{ci}-I_{cj})I_{ij},\\
\dot{I}_{ij} &= (I_{cj} + I_{ic})\big(-2\kappa_1 + (\kappa_0 + 2\kappa_1)J_{ij} \big)
- \kappa_0 I_{ij}(2 - J_{ci} - J_{cj}).
\end{align*} 
Therefore, we have
\begin{align}
\begin{aligned}\label{XX-5}
&\frac{1}{2}\frac{d}{dt}(I_{ij}^2+J_{ij}^2) = \dot{I}_{ij}I_{ij}+\dot{J}_{ij}J_{ij}\\ 
&\hspace{0.5cm} = \Big((I_{cj} + I_{ic})\big(-2\kappa_1 + (\kappa_0 + 2\kappa_1)J_{ij} \big)
- \kappa_0 I_{ij}(2 - J_{ci} - J_{cj})\Big)I_{ij} \\ 
&\hspace{0.5cm}- \Big(\kappa_0 (2-J_{jc} - J_{ic})J_{ij} - (\kappa_0+ 2\kappa_1)(I_{ci}-I_{cj})I_{ij}\Big)J_{ij} \\ 
&\hspace{0.5cm}= -2\kappa_1\big((I_{cj} + I_{ic}) - \kappa_0 I_{ij}(2 - J_{ci} - J_{cj})\big)I_{ij}
- \big(\kappa_0 (2-J_{jc} - J_{ic})J_{ij}\big)J_{ij} \\
&\hspace{0.5cm}= -\kappa_0(2-J_{jc}-J_{ic})(I_{ij}^2 +J_{ij}^2) - 2\kappa_1(I_{cj}+I_{ic})I_{ij}.
\end{aligned}
\end{align}
For $t \geq 0$, we choose time-dependent indices $i=i_t$ and $j=j_t$ such that 
\begin{align}\label{XX-6}
(i,j) \in \argmax_{(k,l)} (I_{kl}^2 + J_{kl}^2).
\end{align}
Then, one has
\begin{align}\label{XX-7}
| I_{\alpha c} | \leq \max_{\beta,\gamma} | I_{\beta \gamma} | \leq \sqrt{I_{ij}^2 + J_{ij}^2}, \quad
 J_{\alpha c}  \leq \max_{\beta ,\gamma}  J_{\beta \gamma} \leq \sqrt{I_{ij}^2 + J_{ij}^2},
\end{align}
for any index $\alpha$.  It follows from \eqref{XX-5} and \eqref{XX-7} that 
\begin{align}\begin{aligned}\label{XX-8}
\frac{1}{2}\frac{d}{dt}(I_{ij}^2+J_{ij}^2)&= -\kappa_0(2-J_{jc}-J_{ic})(I_{ij}^2 +J_{ij}^2) - 2\kappa_1(I_{cj}+I_{ic})I_{ij} &&\big(\mbox{by}~ \eqref{XX-5}\big)
\\ & \leq -\kappa_0(2-J_{jc}-J_{ic})(I_{ij}^2 +J_{ij}^2) + 2|\kappa_1|(| I_{cj} |+| I_{ic} |)| I_{ij} |
\\ &\leq -\kappa_0(2-J_{jc}-J_{ic})(I_{ij}^2 +J_{ij}^2) + 4|\kappa_1|(I_{ij}^2 + J_{ij}^2) &&\big(\mbox{by}~ \eqref{XX-7}_1\big)
\\ & =-\kappa_0\left(2-J_{jc}-J_{ic}-\frac{4|\kappa_1|}{\kappa_0}\right)(I_{ij}^2+J_{ij}^2)
\\ & =-2\kappa_0\left(1-\sqrt{I_{ij}^2+J_{ij}^2}-\frac{2|\kappa_1|}{\kappa_0}\right)(I_{ij}^2+J_{ij}^2) &&\big( \mbox{by}~ \eqref{XX-7}_2 \big).
\end{aligned}
\end{align}
On the other hand, by \eqref{XX-6} one has 
\[
\mathcal{F} = \max_{k, l}|1-\langle z_k, z_l\rangle|=\sqrt{\max_{k,l}(I_{kl}^2+J_{kl}^2)} \overset{\eqref{XX-6}}{=} \sqrt{I_{ij}^2+J_{ij}^2}.
\]
Then we can rewrite \eqref{XX-8} and obtain the following inequality:
\begin{align*}\label{XX-9}
\dot{\mathcal{F}} \leq -2\kappa_0\left(1-\mathcal{F}-\frac{2|\kappa_1|}{\kappa_0}\right)\mathcal{F}.
\end{align*}
If $0\leq\mathcal{F}^0<1-\frac{2|\kappa_1|}{\kappa_0}-\delta$ for some positive $\delta>0$, then we can see that $\mathcal{F}$ is decreasing for all $t\geq0$. So, we obtain Gronwall's inequality:
\[
\dot{\mathcal{F}}\leq -2\kappa_0\left(1-\mathcal{F}^0-\frac{2|\kappa_1|}{\kappa_0}\right)\mathcal{F}<-2\kappa_0\delta \mathcal{F}.
\]
This implies the exponential decay of $\mathcal{F}$ :
\[
\mathcal{F}(t)< \mathcal{F}^0\exp(-2\kappa_0\delta t), \quad t \geq 0.
\]
\qed
\section{Proof of Proposition \ref{Lp_stability}}\label{App-B}
\setcounter{equation}{0}
\noindent (i)~Let $z_i$ and ${\tilde z}_i$ be solutions to \eqref{A-0}.  Then, $z_i$ and ${\tilde z}$ satisfy 
\begin{align*}
\dot{z}_i&=\Omega_i z_i + \frac{\kappa_0}{N}\sum_{k=1}^N (z_k-\langle z_k, z_i \rangle z_i)+ \frac{\kappa_1}{N}\sum_{k=1}^N (\langle z_i, z_k \rangle - \langle z_k, z_i \rangle)z_i\\
&= \Omega_i z_i + \frac{\kappa_0}{N}\sum_{k=1}^N (z_k-R_{ik} z_i)+ 2 \mathrm{i}\frac{(\frac{\kappa_0}{2}+\kappa_1)}{N}\sum_{k=1}^N I_{ik}z_i,\\
\end{align*}
and
\begin{align*}
\dot{{\tilde z}}_i &=  \Omega_i {\tilde z}_i + \frac{\kappa_0}{N}\sum_{k=1}^N ({\tilde z}_k- {\tilde R}_{ik} {\tilde z}_i)+ 2 \mathrm{i}\frac{(\frac{\kappa_0}{2}+\kappa_1)}{N}\sum_{k=1}^N {\tilde I}_{ik} {\tilde z}_i,
\end{align*} 
where $R_{ij}=\mathrm{Re}\langle z_i, z_j \rangle$, $I_{ij}=\mathrm{Im}\langle z_i, z_j \rangle$. Then, $\tilde{z}_i-z_i$ satisfies
\begin{align*}
&\frac{d}{dt}(\tilde{z}_i-z_i) \\
&\hspace{0.5cm} =\Omega_i(\tilde{z}_i-z_i) + \frac{\kappa_0}{N}\sum_{k=1}^N \big((\tilde{z}_k-z_k)-(\tilde{R}_{ik}\tilde{z}_i-R_{ik} z_i)\big) + 2 \mathrm{i}\frac{(\frac{\kappa_0}{2}+\kappa_1)}{N}\sum_{k=1}^N (\tilde{I}_{ik}\tilde{z}_i-I_{ik}z_i)\\
&\hspace{0.5cm}= \Omega_i(\tilde{z}_i-z_i)
+ \frac{\kappa_0}{N}\sum_{k=1}^N (\tilde{z}_k-z_k)
 - \frac{\kappa_0}{N}\sum_{k=1}^N \big(\tilde{R}_{ik}(\tilde{z}_i-z_i)+(\tilde{R}_{ik}-R_{ik}) z_i\big) \\
&\hspace{1cm}+ 2 \mathrm{i} \frac{(\frac{\kappa_0}{2}+\kappa_1)}{N}\sum_{k=1}^N (\tilde{I}_{ik}\tilde{z}_i-I_{ik}z_i).
\end{align*}
This yields
\begin{align*}
&\left\langle \frac{d}{dt}(\tilde{z}_i-z_i), \tilde{z}_i-z_i \right\rangle \\
& \hspace{0.5cm} =\left\langle \Omega_i(\tilde{z}_i-z_i), \tilde{z}_i-z_i \right\rangle
+ \frac{\kappa_0}{N}\sum_{k=1}^N \langle \tilde{z}_k-z_k, \tilde{z}_i-z_i \rangle - \frac{\kappa_0}{N}\sum_{k=1}^N \left\langle \tilde{R}_{ik}(\tilde{z}_i-z_i)+(\tilde{R}_{ik}-R_{ik}) z_i, \tilde{z}_i-z_i \right\rangle \\
& \hspace{0.5cm}  - 2 \mathrm{i} \frac{(\frac{\kappa_0}{2}+\kappa_1)}{N}\sum_{k=1}^N \langle \tilde{I}_{ik}\tilde{z}_i-I_{ik}z_i, \tilde{z}_i - z_i \rangle\\
& \hspace{0.5cm} = \left\langle \Omega_i(\tilde{z}_i-z_i), \tilde{z}_i-z_i \right\rangle
+ \frac{\kappa_0}{N}\sum_{k=1}^N \left\langle \tilde{z}_k-z_k, \tilde{z}_i-z_i \right\rangle - \frac{\kappa_0}{N}\sum_{k=1}^N ( \tilde{R}_{ik}\|\tilde{z}_i-z_i\|^2 + (\tilde{R}_{ik}-R_{ik})\langle z_i, \tilde{z}_i-z_i \rangle) \\
& \hspace{0.5cm}- 2 \mathrm{i} \frac{(\frac{\kappa_0}{2}+\kappa_1)}{N}\sum_{k=1}^N \big(\tilde{I}_{ik}\langle\tilde{z}_i, \tilde{z}_i - z_i \rangle-I_{ik}\langle z_i, \tilde{z}_i - z_i \rangle\big).
\end{align*}
Since $\Omega_i$ is skew-Hermitian, we can see that $\langle \Omega_i(\tilde{z}_i-z_i), \tilde{z}_i-z_i \rangle$ is purely imaginary. Thus, one has
\begin{align}\label{Y-0}
\begin{aligned}
\frac{d}{dt}\|\tilde{z}_i-z_i\|^2&= \left\langle \frac{d}{dt}(\tilde{z}_i-z_i), \tilde{z}_i-z_i \right\rangle + \left\langle \tilde{z}_i-z_i, \frac{d}{dt}(\tilde{z}_i-z_i) \right\rangle\\
& = 2\mathrm{Re}\left\langle \frac{d}{dt}(\tilde{z}_i-z_i), \tilde{z}_i-z_i \right\rangle \\
&= \frac{2\kappa_0}{N}\sum_{k=1}^N \left( \mathrm{Re}\langle \tilde{z}_k-z_k, \tilde{z}_i-z_i \rangle - \tilde{R}_{ik} \| \tilde{z}_i - z_i \|^2 + (R_{ik}-\tilde{R}_{ik})\mathrm{Re}\langle z_i, \tilde{z}_i-z_i \rangle \right)\\
&\hspace{0.2cm} + \frac{4(\frac{\kappa_0}{2}+\kappa_1)}{N}\sum_{k=1}^N
\underbrace{\mathrm{Im}\left(\tilde{I}_{ik}\langle \tilde{z}_i , \tilde{z}_i-z_i \rangle - I_{ik} \langle z_i, \tilde{z}_i-z_i \rangle \right)}_{=: \mathcal{I}_{i,k}}.
\end{aligned}
\end{align}
 Note that 
\begin{align}\label{Y-1}
\begin{aligned}
\mathcal{I}_{i,k} &= 
\mathrm{Im}\left(\tilde{I}_{ik}(\underbrace{\langle \tilde{z}_i , \tilde{z}_i \rangle}_{=1} - \langle \tilde{z}_i , z_i \rangle) - I_{ik}(\langle z_i, \tilde{z}_i \rangle-\underbrace{\langle z_i, z_i \rangle}_{=1}) \right)
= \mathrm{Im}\left(-\tilde{I}_{ik}\langle \tilde{z}_i , z_i \rangle - I_{ik}\langle z_i, \tilde{z}_i \rangle \right)\\
&= - \mathrm{Im}\left(\underbrace{\tilde{I}_{ik}(\langle \tilde{z}_i , z_i \rangle + \langle z_i, \tilde{z}_i \rangle)}_{=\text{real number}} + (I_{ik}-\tilde{I}_{ik})\langle z_i, \tilde{z}_i \rangle \right)
= -(I_{ik}-\tilde{I}_{ik})\mathrm{Im}\langle z_i, \tilde{z}_i \rangle.
\end{aligned}
\end{align}
We combine \eqref{Y-0} and \eqref{Y-1} to obtain
\begin{align*}
\frac{d}{dt}\|\tilde{z}_i-z_i\|^2
&= \frac{2\kappa_0}{N}\sum_{k=1}^N \left( \mathrm{Re}\langle \tilde{z}_k-z_k, \tilde{z}_i-z_i \rangle - \tilde{R}_{ik} \| \tilde{z}_i - z_i \|^2 + (R_{ik}-\tilde{R}_{ik})\mathrm{Re}(\langle z_i, \tilde{z}_i \rangle - 1 ) \right)\\
&\hspace{0.2cm}- \frac{4(\frac{\kappa_0}{2}+\kappa_1)}{N}\sum_{k=1}^N (I_{ik}-\tilde{I}_{ik})\mathrm{Im}\langle z_i, \tilde{z}_i \rangle.
\end{align*}
Now, we use 
\[
\big|\mathrm{Re}\langle \tilde{z}_k-z_k, \tilde{z}_i-z_i \rangle\big| \leq \| \tilde{z}_k - z_k \| \cdot\| \tilde{z}_i - z_i \|,\quad \mathrm{Re}\langle z_i, \tilde{z}_i \rangle - 1 = -\frac{\|z_i - \tilde{z}_i\|^2}{2}
\]
to get 
\begin{align*}
\begin{aligned}
\frac{d}{dt}\|\tilde{z}_i-z_i\|^2 &\leq \frac{2|\kappa_0|}{N}\sum_{k=1}^N \left( \|\tilde{z}_k-z_k\| \cdot\|\tilde{z}_i-z_i\| +\|z_i - \tilde{z}_i\|^2  \right) \\
&+ \frac{2|\kappa_0+2\kappa_1|}{N}\sum_{k=1}^N \big|(I_{ik}-\tilde{I}_{ik})\cdot \mathrm{Im}\langle z_i, \tilde{z}_i \rangle\big|.
\end{aligned}
\end{align*}
To bound the last term, we use the Cauchy-Schwarz inequality and unit modulus of $z_i$ and $\tilde{z}_i$ to get
\begin{align*}
\begin{aligned}
| I_{ik}-\tilde{I}_{ik} |  &= | \mathrm{Im}(\langle z_i, z_k \rangle-\langle \tilde{z}_i, \tilde{z}_k \rangle) | \\
&= | \mathrm{Im}(\langle z_i-\tilde{z}_i, z_k \rangle + \langle \tilde{z}_i, z_k - \tilde{z}_k \rangle) | \leq \|z_i - \tilde{z}_i\|+\|z_k-\tilde{z}_k\|, \\
| \mathrm{Im}\langle z_i, \tilde{z}_i \rangle | &= |\mathrm{Im}\langle z_i-\tilde{z}_i, \tilde{z}_i\rangle|\leq |\langle z_i-\tilde{z}_i, \tilde{z}_i\rangle| \leq \|z_i - \tilde{z}_i \|.
\end{aligned}
\end{align*}
To sum up, we have
\begin{align*}
\frac{d}{dt}\|\tilde{z}_i-z_i\| &\leq \frac{|\kappa_0|}{N}\sum_{k=1}^N \left( \|\tilde{z}_k-z_k\| +\|z_i - \tilde{z}_i\|  \right) + \frac{|\kappa_0+2\kappa_1|}{N}\sum_{k=1}^N (\|z_i - \tilde{z}_i\|+\|z_k-\tilde{z}_k\|)\\
&=\frac{|\kappa_0|+|\kappa_0+2\kappa_1|}{N}\sum_{k=1}^N (\|z_i - \tilde{z}_i\|+\|z_k-\tilde{z}_k\|).
\end{align*}
We multiply $p\|\tilde{z}_i-z_i\|^{p-1}$ to the above relation and sum up the resulting relation over all  $i \in {\mathcal N}$  to obtain
\begin{align*}
\frac{d}{dt}\sum_{i=1}^N\|\tilde{z}_i-z_i\|^p
\leq \frac{p(|\kappa_0|+|\kappa_0+2\kappa_1|)}{N}\sum_{i,k=1}^N \bigg( \|\tilde{z}_k-z_k\|\cdot\|\tilde{z}_i-z_i\|^{p-1} +\|z_i - \tilde{z}_i\|^p  \bigg).
\end{align*}
By H\"{o}lder type inequalities, one has 
\begin{align*}
&\sum_{k=1}^N\|\tilde{z}_k-z_k\| =\|\tilde{Z}-Z\|_1 \leq N^{\frac{p-1}{p}}\|\tilde{Z}-Z\|_p,\\
&\sum_{i=1}^N\|\tilde{z}_i-z_i\|^{p-1} =\|\tilde{Z}-Z\|_{p-1}^{p-1} \leq \Big(\|\tilde{Z}-Z\|_p \cdot N^{\frac{1}{p(p-1)}} \Big)^{p-1}=N^{\frac{1}{p}}\|\tilde{Z}-Z\|_p^{p-1}.
\end{align*}
These imply
\begin{align}\label{Y-2}
\sum_{i, k=1}^N\|\tilde{z}_k-z_k\|\cdot\|\tilde{z}_i-z_i\|^{p-1} \leq N\|\tilde{Z}-Z\|_p^p.
\end{align}
We combine all the estimates altogether to derive
\begin{align*}
\frac{d}{dt} \|\tilde{Z}-Z\|^p_p &\leq 2p\left(|\kappa_0|+|\kappa_0+2\kappa_1|\right)\|\tilde{Z}-Z\|_p^p,
\end{align*}
or equivalently
\[
\frac{d}{dt} \|\tilde{Z}-Z\|_p \leq 2\left(|\kappa_0|+|\kappa_0+2\kappa_1|\right)\|\tilde{Z}-Z\|_p.
\]
Finally, we apply Gr\"{o}nwall's lemma to get 
\[
\| \tilde{Z}(t) - Z(t)\|_p \leq \exp\big(2t(|\kappa_0|+|\kappa_0+2\kappa_1|)\big)\| \tilde{Z}(0) - Z(0)\|_p.
\]
This yields the first desired estimate:
\[
\sup_{0\leq t\leq T}\| \tilde{Z}(t) - Z(t)\|_p \leq \exp\big(2T(|\kappa_0|+|\kappa_0+2\kappa_1|)\big)\| \tilde{Z}(0) - Z(0)\|_p,
\]

\vspace{0.5cm}

\noindent(ii)~For the second statement, we combine \eqref{Y-0} with the following relations:
\[
\big|\mathrm{Re}\langle \tilde{z}_k-z_k, \tilde{z}_i-z_i \rangle\big| \leq \| \tilde{z}_k - z_k \| \cdot\| \tilde{z}_i - z_i \|,\quad \mathrm{Re}\langle z_i, \tilde{z}_i \rangle - 1 = -\frac{\|z_i - \tilde{z}_i\|^2}{2}
\]
to derive 
\begin{align}\label{ZZ-0}
\begin{aligned}
\frac{d}{dt}\|\tilde{z}_i-z_i\|^2 &\leq \frac{2\kappa_0}{N}\sum_{k=1}^N \left( \|\tilde{z}_k-z_k\| \|\tilde{z}_i-z_i\| - \frac{R_{ik}+\tilde{R}_{ik}}{2}\|z_i - \tilde{z}_i\|^2 ) \right) + \frac{2|\kappa_0+2\kappa_1|}{N}\sum_{k=1}^N {\mathcal{I}}_{i,k}.
\end{aligned}
\end{align}
To estimate the last term, we rewrite \eqref{Y-1} by
\begin{align*}
\mathcal{I}_{i,k} 
&= -(I_{ik}-\tilde{I}_{ik})\mathrm{Im}\langle z_i, \tilde{z}_i \rangle \\
&= -(\mathrm{i}z_i \cdot z_k - \mathrm{i}\tilde{z}_i \cdot \tilde{z}_k)(\mathrm{i}z_i \cdot \tilde{z}_i)\\
&= -\big((\mathrm{i}(z_i-\tilde{z}_i) \cdot (z_k-z_i)) 
+ (\mathrm{i}(z_i - \tilde{z}_i) \cdot z_i)
+ (\mathrm{i}\tilde{z}_i \cdot (z_k - \tilde{z}_k))\big)(\mathrm{i}z_i \cdot \tilde{z}_i) \\
&= -\big((\mathrm{i}(z_i-\tilde{z}_i) \cdot (z_k - z_i))
- (\mathrm{i}\tilde{z}_i \cdot z_i)
+ (\mathrm{i}\tilde{z}_i \cdot (z_k - \tilde{z}_k))\big)(\mathrm{i}z_i \cdot \tilde{z}_i)  \quad (\mbox{by}~~\mathrm{i}z \cdot z = 0) \\
&= -(\mathrm{i}(z_i-\tilde{z}_i) \cdot (z_k-z_i))(\mathrm{i}z_i \cdot \tilde{z}_i)
- (\mathrm{i}\tilde{z}_i \cdot z_i)^2\\
& \hspace{5cm} - (\mathrm{i}(\tilde{z}_i - \tilde{z}_k)\cdot (z_k - \tilde{z}_k)+(\mathrm{i}\tilde{z}_k \cdot (z_k - \tilde{z}_k)) )(\mathrm{i}z_i \cdot \tilde{z}_i)\\
&= -\big(\mathrm{i}(z_i-\tilde{z}_i) \cdot (z_k-z_i) + \mathrm{i}(\tilde{z}_i - \tilde{z}_k) \cdot (z_k - \tilde{z}_k)\big)(\mathrm{i}z_i \cdot \tilde{z}_i) - \underbrace{\big((\mathrm{i} z_i \cdot \tilde{z}_i)-(\mathrm{i}z_k \cdot \tilde{z}_k)\big)(\mathrm{i}z_i \cdot \tilde{z}_i)}_{=: \mathcal{J}_{i,k}}.\\
\end{align*}
In particular, one has
\[
\sum_{i,k=1}^N \mathcal{J}_{i,k} = \sum_{i,k=1}^N (\mathrm{i}z_i \cdot \tilde{z}_i - \mathrm{i}z_k \cdot \tilde{z}_k)^2 \geq 0.
\]
Therefore, as we define $\mathcal{\tilde{I}}_{i,k}:=\mathcal{I}_{i,k}+\mathcal{J}_{i,k}$,
\begin{align}\label{ZZ-0.5}
\sum_{i,k=1}^N\mathcal{I}_{i,k} \leq \sum_{i,k=1}^N-\big(\mathrm{i}(z_i-\tilde{z}_i) \cdot (z_k-z_i) + \mathrm{i}(\tilde{z}_i - \tilde{z}_k) \cdot (z_k - \tilde{z}_k)\big)(\mathrm{i}z_i \cdot \tilde{z}_i) = \sum_{i,k=1}^N\mathcal{\tilde{I}}_{i,k}.
\end{align}
Since $\iota$ is an isometry, for complex vectors $a$ and $b$, we have
\[
| a \cdot b | = | \iota(a) \cdot \iota(b) | \leq \| \iota(a) \|\| \iota(b) \|  = \|a\|\|b\|.
\]
Thus we obtain
\[
| \mathrm{i}z_i \cdot \tilde{z}_i | = | \mathrm{i} z_i \cdot (\tilde{z}_i - z_i) | \leq \| \tilde{z}_i - z_i \|. 
\]
On the other hand, it follows from a priori assumptions that there exist positive constants $A$ and $B$ satisfying
\begin{align} 
\begin{aligned} \label{ZZ-1}
& \max\Big \{  \max_{i,j} \left\| z_i - z_j \right\|,~\max_{k,l}(1-R_{kl}) \Big \} \leq Ae^{-Bt}, \\
&  \max\Big \{  \max_{i,j} \left\| {\tilde z}_i - {\tilde z}_j \right\|,~\max_{k,l}(1-{\tilde R}_{kl}) \Big \} \leq Ae^{-Bt}.
\end{aligned}
\end{align}
Therefore, one has
\begin{align}\label{ZZ-2}
\begin{aligned}
\sum_{i,k} \mathcal{\tilde{I}}_{i,k}  &\leq \sum_{i,k} \big| \mathcal{\tilde{I}}_{i,k} \big| \\
&\leq \sum_{i,k} |-\big(\mathrm{i}(z_i-\tilde{z}_i) \cdot (z_k-z_i) + \mathrm{i}(\tilde{z}_i - \tilde{z}_k) \cdot (z_k - \tilde{z}_k)\big)(\mathrm{i}z_i \cdot \tilde{z}_i) | \\
&\leq \sum_{i,k} (|\big(\mathrm{i}(z_i-\tilde{z}_i) \cdot (z_k-z_i)| + |\big(\mathrm{i}(\tilde{z}_i - \tilde{z}_k) \cdot (z_k - \tilde{z}_k)\big)(\mathrm{i}z_i \cdot \tilde{z}_i) |) \\
& \leq Ae^{-tB}\sum_{i,k}\left(\| z_i - \tilde{z}_i \|^2 + \| z_i - \tilde{z}_i \| \cdot \| z_k - \tilde{z}_k \|\right).
\end{aligned}
\end{align}
To sum up \eqref{ZZ-0} and \eqref{ZZ-1}, we have
\begin{align*}
&\frac{d}{dt}\|\tilde{z}_i-z_i\|^2 \\
&\hspace{0.5cm} \leq \frac{2\kappa_0}{N}\sum_{k=1}^N \left( \|\tilde{z}_k-z_k\| +(Ae^{-Bt}-1)\|z_i - \tilde{z}_i\| ) \right)\|\tilde{z}_i-z_i\| + \frac{2|\kappa_0+2\kappa_1|}{N}\sum_{k=1}^N {\mathcal{I}}_{i,k}.
\end{align*}
Again, we sum up the above relation over all $i \in {\mathcal N}$ to obtain
\begin{align*}
&\frac{d}{dt}\sum_{i=1}^N\|\tilde{z}_i-z_i\|^2\\
&\hspace{0.5cm} \leq \frac{2\kappa_0}{N}\sum_{i,k} \left( \|\tilde{z}_k-z_k\|\|\tilde{z}_i-z_i\| +(Ae^{-Bt}-1)\|z_i - \tilde{z}_i\|^2 ) \right) + \frac{2|\kappa_0+2\kappa_1|}{N}\sum_{i,k}\mathcal{I}_{i,k} \\
&\hspace{0.5cm}\leq \frac{2\kappa_0}{N}\sum_{i,k} \left( \|\tilde{z}_k-z_k\|\|\tilde{z}_i-z_i\| +(Ae^{-Bt}-1)\|z_i - \tilde{z}_i\|^2 ) \right) + \frac{2|\kappa_0+2\kappa_1|}{N}\sum_{i,k}\mathcal{\tilde{I}}_{i,k}  \quad (\mbox{by} ~~\eqref{ZZ-0.5})\\
&\hspace{0.5cm}\leq \frac{2\kappa_0}{N}\sum_{i,k} \left( \|\tilde{z}_k-z_k\|\|\tilde{z}_i-z_i\| +(Ae^{-Bt}-1)\|z_i - \tilde{z}_i\|^2 ) \right)\\
&\hspace{0.7cm} + \frac{2Ae^{-tB}|\kappa_0+2\kappa_1|}{N}\sum_{i,k} (\|z_i - \tilde{z}_i\|^2+\|z_k-\tilde{z}_k\|\|\tilde{z}_i-z_i\|) \quad (\mbox{by}~~ \eqref{ZZ-2}).
\end{align*}
We use \eqref{Y-2} to obtain
\[
\sum_{i, k=1}^N\|\tilde{z}_k-z_k\|\cdot\|\tilde{z}_i-z_i\| \leq N\|\tilde{Z}-Z\|^2.
\]
We collect all the estimates altogether to derive
\begin{align*}
\frac{d}{dt} \|\tilde{Z}-Z\|_2 \leq 2Ae^{-Bt}\left(\kappa_0+2\left|\kappa_0+2\kappa_1|\right)\right\|\tilde{Z}-Z\|^2,
\end{align*}
or equivalently
\begin{align*}
\frac{d}{dt} \|\tilde{Z}-Z\| \leq Ae^{-Bt}\left(\kappa_0+2\left|\kappa_0+2\kappa_1|\right)\right\|\tilde{Z}-Z\|.
\end{align*}
Finally, we apply Gronwall's lemma to get 
\[
\| \tilde{Z}(t) - Z(t)\| \leq \exp{\left(\int_0^t Ae^{-Bs}\left(\kappa_0+2|\kappa_0+2\kappa_1|\right)ds\right)} \| \tilde{Z}(0) - Z(0)\|, \quad t \geq 0.
\]
This completes the proof. 

\vspace{0.5cm}

\section{A formal derivation of the kinetic LHS model} \label{App-C}
\setcounter{equation}{0}
In this appendix, we use the standard BBGKY hierarchy to derive the mean-field kinetic model of the LHS model formally (see \cite{G-H, H-L, Sp} for related results). Here, we briefly summarize the main steps of the derivation for the convenience of readers. \newline

Consider the LHS model:
\begin{align*}
\begin{cases}
\displaystyle \dot{z}_j=\Omega_j z_j+\frac{1}{N}\sum_{k=1}^N\big(\kappa_0(z_k-\langle{z_k, z_j}\rangle z_j)+\kappa_1(\langle{z_j, z_k}\rangle-\langle{z_k, z_j}\rangle)z_j\big), \quad t > 0, \\
\dot{\Omega}_j=0, \quad j \in \mathcal{N}.
\end{cases}
\end{align*}
Let $f^{N} = f^{N}(t,z_1,\Omega_1, z_2, \Omega_2, \cdots, z_N, \Omega_N)$ be the $N$-particle distribution function which is symmetric, i.e., for any permutation $s$ in a symmetric group $S_N$,  we have 
\[f^{N}(t, z_{s(1)},\Omega_{s(1)},z_{s(2)},\Omega_{s(2)}, \cdots, z_{s(N)},\Omega_{s(N)})= f^{N}(t, z_1,\Omega_1, z_2, \Omega_2, \cdots, z_N, \Omega_N).
\]
Then, it satisfies 
 \begin{equation} \label{Liu}
\partial_t f^N +\sum_{i=1}^{N}\nabla_{z_i} \cdot\left[  \left(\Omega_i z_i+\frac{1}{N}\sum_{k=1}^{N}\kappa_0(z_k-\langle{z_k, z_i}\rangle z_i)+\kappa_1(\langle{z_i, z_k}\rangle-\langle{z_k, z_i}\rangle)z_i \right)f^N\right ] =0.
\end{equation}
For notational convenience, we write
\[
Z_i:=(z_i, \Omega_i),\quad {\mathcal Z}=(Z_1, Z_2, \cdots, Z_N),
\]
and set the measure $dZ_i$ on $\Xi$ as follows:
\[
dZ_i:=d\sigma_{z_i}\otimes d\Omega_i\quad\text{and}\quad dZ^i:=\prod_{\substack{k=1\\k\neq i}}^N dZ_k.
\]
Then we can simply write
\[
f^N(t,{\mathcal Z})=f^N(t,Z_1, Z_2, \cdots, Z_N)= f^{N}(t,z_1,\Omega_1, z_2, \Omega_2, \cdots, z_N, \Omega_N).
\]
Now we integrate \eqref{Liu} with respect to variables  $Z_2, \cdots, Z_N$ to get
\begin{align}
\begin{aligned}\label{BB1}
0&=\partial_t \int_{\Xi^{N-1}} f^N(t,\mathcal{Z})dZ^1 \\
&+ \sum_{i=1}^{N}\int_{\Xi^{N-1}}\nabla_{z_i}\cdot 
\bigg(\bigg(\Omega_i z_i+\frac{1}{N}\sum_{k=1}^{N}\kappa_0(z_k-\langle{z_k, z_i}\rangle z_i)+\kappa_1(\langle{z_i, z_k}\rangle-\langle{z_k, z_i}\rangle)z_i\bigg)f^N(t,\mathcal{Z})\bigg)dZ^1.
\end{aligned}
\end{align}
Then, by divergence theorem on $\mathbb{HS}^{d-1}$, we have
\begin{align}
\begin{aligned}\label{BBB1}
&\int_{\mathbb{HS}^{d-1}}\nabla_{z_i}\cdot \bigg(\Omega_i z_i+\frac{1}{N}\sum_{k=1}^{N}\kappa_0(z_k-\langle{z_k, z_i}\rangle z_i)+\kappa_1(\langle{z_i, z_k}\rangle-\langle{z_k, z_i}\rangle)z_if^N(t,z_1,z_2, \cdots, z_N)\bigg)dz_i \\
&\hspace{1cm} =0, \qquad 2 \leq i \leq N.
\end{aligned}
\end{align}
If we integrate above relation \eqref{BBB1} over $\mathrm{Skew}_d\bbc\times \Xi^{N-2}$ with respect to the product measure $\displaystyle d\Omega_i\prod_{\substack{\ell=2\\\ell\neq i}}dZ_l$, we have the following relation:
\[
\int_{\Xi^{N-1}}\nabla_{z_i}\cdot \bigg(\Omega_i z_i+\frac{1}{N}\sum_{k=1}^{N}\kappa_0(z_k-\langle{z_k, z_i}\rangle z_i)+\kappa_1(\langle{z_i, z_k}\rangle-\langle{z_k, z_i}\rangle)z_if^N(t,z_1,z_2, \cdots, z_N)\bigg)dZ^1=0.
\]
Then, this relation simplifies the R.H.S. of \eqref{BB1} as follows:
\begin{align*}
\begin{aligned}
&\text{R.H.S. of \eqref{BB1}} \\
&=\partial_t \int_{\Xi^{N-1}} f^N(t,\mathcal{Z})dZ^1 \\
&\hspace{0.2cm}+ \int_{\Xi^{N-1}}\nabla_{z_1}\cdot 
\bigg(\bigg(\Omega_1 z_1+\frac{1}{N}\sum_{k=1}^{N}\big(\kappa_0(z_k-\langle{z_k, z_1}\rangle z_1)+\kappa_1(\langle{z_1, z_k}\rangle-\langle{z_k, z_1}\rangle)z_1\big)\bigg)f^N(t,\mathcal{Z})\bigg)dZ^1\\
&=\underbrace{\partial_t \int_{\Xi^{N-1}} f^N(t, \mathcal{Z})dZ^1}_{=:\mathcal{I}_1} +\underbrace{\int_{\Xi^{N-1}}\nabla_{z_1}\cdot\bigg((\Omega_1z_1) f^N(t,\mathcal{Z})\bigg)dZ^1}_{=:\mathcal{I}_2}\\
&\hspace{0.2cm}+\underbrace{\frac{1}{N}\sum_{k=1}^N\int_{\Xi^{N-1}}\nabla_{z_1}\cdot\bigg(\big(\kappa_0(z_k-\langle{z_k, z_1}\rangle z_1)+\kappa_1(\langle{z_1, z_k}\rangle-\langle{z_k, z_1}\rangle)z_1\big)f^N(t,\mathcal{Z})\bigg)dZ^1}_{=:\mathcal{I}_3}.
\end{aligned}
\end{align*}
On the other hand, by the symmetry of $f^N$, we get
\begin{align}\label{BB1-1}
f^N(t,Z_1, Z_2, \cdots, Z_k, \cdots, Z_N)=f^N(t, Z_1, Z_k, \cdots, Z_2,\cdots, Z_N).
\end{align}
This yields 
\begin{align*}
&\int_{\Xi^{N-1}}\nabla_{z_1}\cdot\bigg(\big(\kappa_0(z_k-\langle{z_k, z_1}\rangle z_1)+\kappa_1(\langle{z_1, z_k}\rangle-\langle{z_k, z_1}\rangle)z_1\big)f^N(t,Z_1, Z_2, \cdots, Z_k, \cdots, Z_N)\bigg)dZ^1\\
=&\int_{\Xi^{N-1}}\nabla_{z_1}\cdot\bigg(\big(\kappa_0(z_2-\langle{z_2, z_1}\rangle z_1)+\kappa_1(\langle{z_1, z_2}\rangle-\langle{z_2, z_1}\rangle)z_1\big)f^N(t,Z_1, Z_k, \cdots, Z_2,\cdots, Z_n)\bigg)dZ^1\\
=&\int_{\Xi^{N-1}}\nabla_{z_1}\cdot\bigg(\big(\kappa_0(z_2-\langle{z_2, z_1}\rangle z_1)+\kappa_1(\langle{z_1, z_2}\rangle-\langle{z_2, z_1}\rangle)z_1\big)f^N(t,Z_1, Z_2, \cdots, Z_k, \cdots, Z_N)\bigg)dZ^1.
\end{align*}
We sum up the above relation over all $k$ to get 
\begin{align}
\begin{aligned}\label{BB1-2}
&\sum_{k=1}^N\int_{\Xi^{N-1}}\nabla_{z_1}\cdot\bigg(\big(\kappa_0(z_k-\langle{z_k, z_1}\rangle z_1)+\kappa_1(\langle{z_1, z_k}\rangle-\langle{z_k, z_1}\rangle)z_1\big)f^N(t,\mathcal{Z})\bigg)dZ^1\\
=&\sum_{k=1}^N\int_{\Xi^{N-1}}\nabla_{z_1}\cdot\bigg(\big(\kappa_0(z_2-\langle{z_2, z_1}\rangle z_1)+\kappa_1(\langle{z_1, z_2}\rangle-\langle{z_2, z_1}\rangle)z_1\big)f^N(t,\mathcal{Z})\bigg)dZ^1\\
=&(N-1)\int_{\Xi^{N-1}}\nabla_{z_1}\cdot\bigg(\big(\kappa_0(z_2-\langle{z_2, z_1}\rangle z_1)+\kappa_1(\langle{z_1, z_2}\rangle-\langle{z_2, z_1}\rangle)z_1\big)f^N(t,\mathcal{Z})\bigg)dZ^1.
\end{aligned}
\end{align}
Now, we set $f^{N:k}(t,Z_1,\cdots, Z_k)$ to be the marginal marginal distribution function of $f^N$ defined as follows:
\[
f^{N:k}(t,Z_1,\cdots, Z_k) := \int_{(\Xi)^{N-k}} f^N(t,Z_1,\cdots,Z_N) \prod_{\ell=k+1}^NdZ_\ell.
\]
From this definition, we can obtain the following:
\begin{align}
\begin{aligned}\label{BB1-3}
&\int_{\Xi^{N-1}}\nabla_{z_1}\cdot\bigg(\big(\kappa_0(z_2-\langle{z_2, z_1}\rangle z_1)+\kappa_1(\langle{z_1, z_2}\rangle-\langle{z_2, z_1}\rangle)z_1\big)f^N(t,{\mathcal Z})\bigg)dZ^1\\
=&\int_{\Xi^{N-1}}\nabla_{z_1}\cdot\bigg(\big(\kappa_0(z_2-\langle{z_2, z_1}\rangle z_1)+\kappa_1(\langle{z_1, z_2}\rangle-\langle{z_2, z_1}\rangle)z_1\big)f^N(t, {\mathcal Z})\bigg)dZ_2dZ_3\cdots dZ_N\\
=&\int_{\Xi}\nabla_{z_1}\cdot\bigg(\big(\kappa_0(z_2-\langle{z_2, z_1}\rangle z_1)+\kappa_1(\langle{z_1, z_2}\rangle-\langle{z_2, z_1}\rangle)z_1\big)f^{N:2}(t,Z_1, Z_2)\bigg)dZ_2.
\end{aligned}
\end{align}
Below, we  estimate $\mathcal{I}_\alpha$ one by one. \newline

\noindent$\diamond$ (Estimate or $\mathcal{I}_1$):~Note that 
\begin{align}\label{BB2-1}
\begin{aligned}
\mathcal{I}_1&=\partial_t \int_{\Xi^{N-1}} f^N(t, {\mathcal Z})dZ^1=\partial_t \int_{\Xi^{N-1}} f^N(t,Z_1, Z_2, \cdots, Z_N)dZ_2dZ_3\cdots dZ_N\\
&=\partial_t \int_{\Xi} f^{N:2}(t,Z_1, Z_2)dZ_2.
\end{aligned}
\end{align}
\vspace{0.2cm}

\noindent$\diamond$ (Estimate or $\mathcal{I}_2$):~Similarly, we have 
\begin{align}\label{BB2-2}
\begin{aligned}
\mathcal{I}_2&=\int_{\Xi^{N-1}}\nabla_{z_1}\cdot\bigg((\Omega_1z_1) f^N(t,Z)\bigg)dZ^1 =\int_{\Xi^{N-1}}\nabla_{z_1}\cdot\bigg((\Omega_1z_1) f^N(t,Z_1,Z_2,\cdots, Z_N)\bigg)dZ_2\cdots dZ_N\\
&=\int_{\Xi}\nabla_{z_1}\cdot\bigg((\Omega_1z_1) f^{N:2}(t,Z_1,Z_2)\bigg)dZ_2.
\end{aligned}
\end{align}
\vspace{0.2cm}

\noindent$\diamond$ (Estimate or $\mathcal{I}_3$):~If we apply \eqref{BB1-1}, \eqref{BB1-2}, and \eqref{BB1-3} step by step, we have
\begin{align}\label{BB2-3}
\begin{aligned}
\mathcal{I}_3&=\frac{1}{N}\sum_{k=1}^N\int_{\Xi^{N-1}}\nabla_{z_1}\cdot\bigg(\big(\kappa_0(z_k-\langle{z_k, z_1}\rangle z_1)+\kappa_1(\langle{z_1, z_k}\rangle-\langle{z_k, z_1}\rangle)z_1\big)f^N(t,Z)\bigg)dZ^1\\
&=\left(\frac{N-1}{N}\right)\int_{\Xi^{N-1}}\nabla_{z_1}\cdot\bigg(\big(\kappa_0(z_2-\langle{z_2, z_1}\rangle z_1)+\kappa_1(\langle{z_1, z_2}\rangle-\langle{z_2, z_1}\rangle)z_1\big)f^N(t,Z)\bigg)dZ^1\\
&=\left(\frac{N-1}{N}\right)\int_{\Xi}\nabla_{z_1}\cdot\bigg(\big(\kappa_0(z_2-\langle{z_2, z_1}\rangle z_1)+\kappa_1(\langle{z_1, z_2}\rangle-\langle{z_2, z_1}\rangle)z_1\big)f^{N:2}(t,Z_1, Z_2)\bigg)dZ_2.
\end{aligned}
\end{align}
\vspace{0.2cm}

\noindent It follows from \eqref{BB2-1}, \eqref{BB2-2}, and \eqref{BB2-3} that 
\begin{align*}
\begin{aligned}
\mathcal{I}_1+\mathcal{I}_2+\mathcal{I}_3 &= \partial_t \int_{\Xi} f^{N:2}(t,Z_1, Z_2)dZ_2+\int_{\Xi}\nabla_{z_1}\cdot\bigg((\Omega_1z_1) f^{N:2}(t,Z_1,Z_2)\bigg)dZ_2\\
&\hspace{0.2cm}+\left(\frac{N-1}{N}\right)\int_{\Xi}\nabla_{z_1}\cdot\bigg(\big(\kappa_0(z_2-\langle{z_2, z_1}\rangle z_1)+\kappa_1(\langle{z_1, z_2}\rangle-\langle{z_2, z_1}\rangle)z_1\big)f^{N:2}(t,Z_1, Z_2)\bigg)dZ_2\\
&=0.
\end{aligned}
\end{align*}
Now we take the mean-field limit $N \to \infty$ and obtain $f^1 := \displaystyle\lim_{N \to \infty}f^{N:1}(t,Z_1)$ and $f^2 := \displaystyle\lim_{N \to \infty}f^{N:2}(t, Z_1, Z_2)$ which satisfy 
\begin{align}\label{MC1}
\begin{aligned}
&\partial_t \int_{\Xi} f^2(t,Z_1, Z_2)dZ_2+\int_{\Xi}\nabla_{z_1}\cdot\bigg((\Omega_1z_1) f^2(t,Z_1,Z_2)\bigg)dZ_2\\
&+ \int_{\Xi}\nabla_{z_1}\cdot\bigg(\big(\kappa_0(z_2-\langle{z_2, z_1}\rangle z_1)+\kappa_1(\langle{z_1, z_2}\rangle-\langle{z_2, z_1}\rangle)z_1\big)f^2(t,Z_1, Z_2)\bigg)dZ_2 =0.
\end{aligned}
\end{align} 
Finally, from the molecular chaos assumption which implies that $f^2(t,Z_1,Z_2) = f^1(t,Z_1)f^1(t,Z_2)$ as $N \to \infty$, it follows that 
\begin{align*}
\eqref{MC1}&= \partial_t \int_{\Xi} f^1(t,Z_1)f^1(t,Z_2)dZ_2 + \int_{\Xi}\nabla_{z_1}\cdot\bigg((\Omega_1z_1) f^1(t,Z_1)f^1(t,Z_2)\bigg)dZ_2 \\
&\hspace{0.2cm}+\int_{\Xi}\nabla_{z_1}\cdot\bigg(\big(\kappa_0(z_2-\langle{z_2, z_1}\rangle z_1)+\kappa_1(\langle{z_1, z_2}\rangle-\langle{z_2, z_1}\rangle)z_1\big)f^1(t,Z_1)f^1(t,Z_2)\bigg)dZ_2\\
&=\partial_t f^1(t,Z_1) + \nabla_{z_1}\cdot\big((\Omega_1z_1) f^1(t,Z_1)\big)\\
&\hspace{0.2cm}+f^1(t,Z_1)\int_{\Xi}\nabla_{z_1}\cdot\bigg(\big(\kappa_0(z_2-\langle{z_2, z_1}\rangle z_1)+\kappa_1(\langle{z_1, z_2}\rangle-\langle{z_2, z_1}\rangle)z_1\big)f^1(t, Z_2)\bigg)dZ_2\\
&=0 .
\end{align*}
In conclusion, one-particle distribution function $f(t,z)$ satisfies the following equation :
\begin{align*}
\begin{cases}
\displaystyle 0=\partial_t f(t,z, \Omega)+\nabla_{z}\cdot(L[f](t,z, \Omega)f(t,z, \Omega)),\vspace{0.2cm}\\
\displaystyle L[f](t,z, \Omega)=\Omega z+\int_{\Xi}\big(\kappa_0(z_*-\langle{z_*, z}\rangle z)+\kappa_1(\langle{z, z_*}\rangle-\langle{z_*, z}\rangle)z\big) f(t,z_*, \Omega_*)dz_*d\Omega_*.
\end{cases}
\end{align*}
This is consistent with the kinetic LHS model \eqref{C-1}.

\vspace{0.5cm}

\section{Proof of the second statement of Lemma \ref{L4.4}}\label{App-D}
\setcounter{equation}{0}
Recall that the order parameter $R$ is given by
\[
R^2=J_\rho\cdot J_\rho, \quad J_\rho :=  \int_{\mathbb{HS}^{d-1}}z\rho(t,z)d\sigma_z.
\]
This yields
\[
\frac{d^2 R^2}{dt^2} =\frac{d}{dt}\left(2J_\rho\cdot \frac{dJ_\rho}{dt} \right)=2\left\|\frac{dJ_\rho}{dt} \right\|^2+2J_\rho\cdot \frac{d^2J_\rho}{dt^2}.
\]
Next, we estimate the first and second derivatives of $J_\rho$.  Let $e\in\mathbb{HS}^{d-1}$ be an arbitrary unit vector. \newline

\noindent$\bullet$~Case A (Boundedness of $\Big| \frac{dJ_\rho}{dt} \Big|)$:~It follows from \eqref{est order} that
\[
e\cdot\frac{d J_\rho}{dt}= \kappa_0 \int_{\mathbb{HS}^{d-1}}\mathbb{P}_{z^\perp}(e) \cdot \mathbb{P}_{z^\perp}(J_\rho )\rho(t,z)d\sigma_z + (\kappa_0 + 2\kappa_1)\int_{\mathbb{HS}^{d-1}}\mathbb{P}_{z^\perp}(e) \cdot \mathbb{P}_{\mathrm{i}z}(J_\rho )\rho(t,z)d\sigma_z.
\]
Then, we use $\|J_\rho\|\leq 1$ to obtain the following inequalities:
\begin{align*}
\begin{aligned}
& |\mathbb{P}_{z^\perp}(e) \cdot \mathbb{P}_{z^\perp}(J_\rho)|\leq \|\mathbb{P}_{z^\perp}(e)\|\cdot\|\mathbb{P}_{z^\perp}(J_\rho)\|\leq \|e\|\cdot \|J_\rho\|\leq1, \\
& |\mathbb{P}_{z^\perp}(e) \cdot \mathbb{P}_{\mathrm{i}z}(J_\rho )|=\|\mathbb{P}_{z^\perp}(e) \|\cdot \|\mathbb{P}_{\mathrm{i}z}(J_\rho )\|\leq \|e\|\cdot\|J_\rho\|\leq 1.
\end{aligned}
\end{align*}

\noindent These yield
\begin{align*}
\left|e\cdot\frac{d J_\rho}{dt} \right| &\leq  \kappa_0 \int_{\mathbb{HS}^{d-1}}|\mathbb{P}_{z^\perp}(e) \cdot \mathbb{P}_{z^\perp}(J_\rho)|\rho(t,z)d\sigma_z 
+ (\kappa_0 + 2\kappa_1)\int_{\mathbb{HS}^{d-1}}|\mathbb{P}_{z^\perp}(e) \cdot \mathbb{P}_{\mathrm{i}z}(J_\rho )
|\rho(t,z)d\sigma_z\\
&\leq  \kappa_0 \int_{\mathbb{HS}^{d-1}}\rho(t,z)d\sigma_z + (\kappa_0 + 2\kappa_1)\int_{\mathbb{HS}^{d-1}}\rho(t,z)d\sigma_z =2(\kappa_0+\kappa_1).
\end{align*}
Since $e$ was an arbitrary unit vector, one has 
\[
\left\|\frac{d J_\rho}{dt} \right\|\leq 2(\kappa_0+\kappa_1).
\]
\vspace{0.2cm}

\noindent$\bullet$~Case B (Boundedness of $\frac{d^2J_\rho}{dt^2}$):~It follows from \eqref{est order} that 
\begin{align*}
e\cdot\frac{d^2 J_\rho}{dt^2} &= \kappa_0 \underbrace{\frac{d}{dt}\left(\int_{\mathbb{HS}^{d-1}}\mathbb{P}_{z^\perp}(e) \cdot \mathbb{P}_{z^\perp}(J_\rho )\rho(t,z)d\sigma_z\right)}_{=:\mathcal{K}_1}  \\
&\hspace{5cm}+(\kappa_0 + 2\kappa_1)\frac{d}{dt}\underbrace{\left(\int_{\mathbb{HS}^{d-1}}\mathbb{P}_{z^\perp}(e) \cdot \mathbb{P}_{\mathrm{i}z}(J_\rho )\rho(t,z)d\sigma_z\right)}_{=:\mathcal{K}_2}.
\end{align*}
Since $J_\rho$ and $\rho$ depend on time $t$, we have
\begin{align*}
\begin{aligned}
\mathcal{K}_1 &=\underbrace{\int_{\mathbb{HS}^{d-1}}\mathbb{P}_{z^\perp}(e) \cdot (\partial_t\mathbb{P}_{z^\perp}(J_\rho ))\rho(t,z)d\sigma_z}_{=:\mathcal{L}_1}
+\underbrace{\int_{\mathbb{HS}^{d-1}}\mathbb{P}_{z^\perp}(e) \cdot \mathbb{P}_{z^\perp}(J_\rho )\partial_t\rho(t,z)d\sigma_z}_{=:\mathcal{L}_2}, \\
\mathcal{K}_2 &=\underbrace{\int_{\mathbb{HS}^{d-1}}\mathbb{P}_{z^\perp}(e) \cdot (\partial_t\mathbb{P}_{\mathrm{i}z}(J_\rho ))\rho(t,z)d\sigma_z}_{=:\mathcal{L}_3}
+\underbrace{\int_{\mathbb{HS}^{d-1}}\mathbb{P}_{z^\perp}(e) \cdot \mathbb{P}_{\mathrm{i}z}(J_\rho )\partial_t\rho(t,z)d\sigma_z}_{=:\mathcal{L}_4}.
\end{aligned}
\end{align*}

\vspace{0.1cm}

\noindent$\diamond$ Case B.1 (Estimate of $\mathcal{L}_1$ and $\mathcal{L}_3$):~Since $\mathbb{P}_{z^\perp}(J_\rho)$ and $\mathbb{P}_{\mathrm{i}z}(J_\rho)$ are continuously differentiable functions defined on the compact domain $\mathbb{B}^{2d}:=\{z\in\bbc^d: \|z\|\leq 1\}$,  $\nabla_{\bbc^d} \mathbb{P}_{z^\perp},~\nabla_{\bbc^d}\mathbb{P}_{\mathrm{i}z}$
are bounded on $\mathbb{B}^{2d}$.  i.e. there exists a positive constant $M$ such that 
\[
\|\nabla_{\bbc^d} \mathbb{P}_{z^\perp}\|_{op}\leq M,\quad \|\nabla_{\bbc^d}\mathbb{P}_{\mathrm{i}z}\|_{op}\leq M
\]
for all $z\in\mathbb{HS}^{d-1}$. This yields
\begin{align*}
\|\partial_t\mathbb{P}_{z^\perp}(J_\rho)\|=\left\|\nabla_{\bbc^d} \mathbb{P}_{z^\perp} \left(\frac{d J_\rho}{dt} \right)\right\|\leq \|\nabla_{\bbc^d} \mathbb{P}_{z^\perp}\|_{op}\cdot \left\|\frac{d J_\rho}{dt} \right\|\leq 2M(\kappa_0+\kappa_1)
\end{align*}
and
\begin{align*}
\|\partial_t\mathbb{P}_{\mathrm{i}z}(J_\rho)\|=\left\|\nabla_{\bbc^d}\mathbb{P}_{\mathrm{i}z}\left(\frac{d J_\rho }{dt} \right)\right\|\leq \|\nabla_{\bbc^d}\mathbb{P}_{\mathrm{i}z}\|_{op}\cdot \left\|\frac{d J_\rho}{dt} \right\|\leq 2M(\kappa_0+\kappa_1).
\end{align*}
Finally, we have
\begin{align*}
|\mathcal{L}_1|&\leq \left| \int_{\mathbb{HS}^{d-1}}\mathbb{P}_{z^\perp}(e) \cdot (\partial_t\mathbb{P}_{z^\perp}(J_\rho ))\rho(t,z)d\sigma_z\right|
\leq\int_{\mathbb{HS}^{d-1}}\|\mathbb{P}_{z^\perp}(e) \|\cdot \left\|\partial_t\mathbb{P}_{z^\perp}(J_\rho )\right\|\rho(t,z)d\sigma_z\\
&\leq 2M(\kappa_0+\kappa_1)\int_{\mathbb{HS}^{d-1}}\rho(t,z)d\sigma_z=2M(\kappa_0+\kappa_1)
\end{align*}
and
\begin{align*}
|\mathcal{L}_3|&\leq \left|\int_{\mathbb{HS}^{d-1}}\mathbb{P}_{z^\perp}(e) \cdot (\partial_t\mathbb{P}_{\mathrm{i}z}(J_\rho ))\rho(t,z)d\sigma_z\right|
\leq\int_{\mathbb{HS}^{d-1}}\|\mathbb{P}_{\mathrm{i}z}(e) \|\cdot \left\|\partial_t\mathbb{P}_{\mathrm{i}z}(J_\rho )\right\|\rho(t,z)d\sigma_z\\
&\leq 2M(\kappa_0+\kappa_1)\int_{\mathbb{HS}^{d-1}}\rho(t,z)d\sigma_z=2M(\kappa_0+\kappa_1).
\end{align*}

\vspace{0.1cm}

\noindent$\diamond$ Case B.2 (Estimate of $\mathcal{L}_2$):~ It follows from \eqref{D-1} that 
\begin{align*}
\mathcal{L}_2&=\int_{\mathbb{HS}^{d-1}}\mathbb{P}_{z^\perp}(e) \cdot \mathbb{P}_{z^\perp}(J_\rho )\partial_t\rho(t,z)d\sigma_z
=-\int_{\mathbb{HS}^{d-1}}\mathbb{P}_{z^\perp}(e) \cdot \mathbb{P}_{z^\perp}(J_\rho ))\big(\nabla_z\cdot(L[\rho]\rho)\big)d\sigma_z.
\end{align*}
By Remark \ref{R4.2}, we have
\[
\mathcal{L}_2=\int_{\mathbb{HS}^{d-1}}\nabla_z\big(\mathbb{P}_{z^\perp}(e) \cdot \mathbb{P}_{z^\perp}(J_\rho ))\big)\cdot(L[\rho]\rho)d\sigma_z.
\]
It follows from Lemma \ref{L3.2} that 
\begin{align*}
\mathbb{P}_{z^\perp}(e) \cdot \mathbb{P}_{z^\perp}(J_\rho )&=(e-(z\cdot e)z)\cdot(J_\rho-(z\cdot J_\rho)z)\\
&=e\cdot J_\rho-(z\cdot e)(z\cdot J_\rho)-(z\cdot J_\rho)(e\cdot z)+(z\cdot e)(z\cdot J_\rho)(z\cdot z)\\
&=e\cdot J_\rho-(z\cdot e)(z\cdot J_\rho).
\end{align*}
This yields,
\begin{align*}
\|\nabla_z\big(\mathbb{P}_{z^\perp}(e) \cdot \mathbb{P}_{z^\perp}(J_\rho )\big)\|&=\|\mathbb{P}_z \nabla_{\bbc^d}\big(e\cdot J_\rho-(z\cdot e)(z\cdot J_\rho)\big)\|\\
&\leq \|\nabla_{\bbc^d}\big(e\cdot J_\rho-(z\cdot e)(z\cdot J_\rho)\big)\|=\|\nabla_{\bbc^d}(z\cdot e)(z\cdot J_\rho)\|\\
&=\|(z\cdot J_\rho)e+(z\cdot e)J_\rho\|\leq 2.
\end{align*}
Here we used $\|e\|, \|J_\rho\|, \|z\|\leq 1$ and the triangle inequality. \newline

Finally we have
\begin{align*}
|\mathcal{L}_2|&\leq2\int_{\mathbb{HS}^{d-1}}\|L[\rho]\|\rho(t, z) d\sigma_z\\
&=2\int_{\mathbb{HS}^{d-1}}\| \kappa_0(J_\rho - ( J_\rho \cdot z) z) + (\kappa_0+2\kappa_1)((\mathrm{i}z)\cdot J_\rho)(\mathrm{i}z)\|\rho(t, z)d\sigma_z\\
&\leq2\int_{\mathbb{HS}^{d-1}}\big(\kappa_0\| J_\rho - ( J_\rho \cdot z) z\| +(\kappa_0+2\kappa_1) \|((\mathrm{i}z)\cdot J_\rho)(\mathrm{i}z)\|\big)\rho(t, z)d\sigma_z\\
&\leq 2\kappa_0+2(\kappa_0+2\kappa_1)=4(\kappa_0+\kappa_1).
\end{align*}

\vspace{0.2cm}

\noindent$\diamond$ Case B.3 (Estimate of $\mathcal{L}_4$):~Similar to the previous cases, $|\mathcal{L}_4|$ is bounded as follows:
\[
|\mathcal{L}_4|\leq 4(\kappa_0+\kappa_1).
\]
If we sum-up all the estimates of $\mathcal{L}_j$ ($j=1, 2, 3, 4$), we have
\begin{align*}
\left|e\cdot\frac{d^2 J_\rho}{dt^2} \right|&\leq \kappa_0|\mathcal{K}_1|+(\kappa_0+2\kappa_1)|\mathcal{K}_2|\\
&\leq \kappa_0(|\mathcal{L}_1|+|\mathcal{L}_2|)+(\kappa_0+2\kappa_1)(|\mathcal{L}_3|+|\mathcal{L}_4|)\\
&\leq \kappa_0(2M(\kappa_0+\kappa_1)+4(\kappa_0+\kappa_1))+(\kappa_0+2\kappa_1)(2M(\kappa_0+\kappa_1)+4(\kappa_0+\kappa_1))\\
&=4(\kappa_0+\kappa_1)^2(M+2).
\end{align*}
Since above inequality holds for any unit vector $e\in\mathbb{HS}^{d-1}$, we have
\[
\left\|\frac{d^2 J_\rho}{dt^2} \right\|\leq 4(\kappa_0+\kappa_1)^2(M+2).
\]
Finally, we have a uniform boundedness for the second derivative of $R$:
\begin{align*}
\frac{d^2 R^2}{dt^2}&=2\left\|\frac{d J_\rho}{dt} \right\|^2+2J_\rho\cdot \frac{d^2 J_\rho}{dt^2} \leq  \big( 2(\kappa_0+\kappa_1)\big)^2+4(\kappa_0+\kappa_1)^2(M+2)\\
&=4(\kappa_0+\kappa_1)^2(M+3).
\end{align*}
This completes the proof.


\begin{thebibliography}{99}
\bibitem{A-B} Acebron, J. A., Bonilla, L. L., P\'{e}rez Vicente, C. J. P., Ritort, F. and Spigler, R.: \textit{The Kuramoto model: A simple paradigm for synchronization phenomena.} Rev. Mod. Phys. \textbf{77} (2005), 137-185.

\bibitem{A-B-F} Albi, G., Bellomo, N., Fermo, L., Ha, S.-Y., Kim, J., Pareschi, L., Poyato, D. and Soler, J.: \textit{Vehicular traffic, crowds and swarms: From kinetic theory and multiscale methods to applications and research perspectives.} Math. Models Methods Appl. Sci. {\bf 29} (2019), 1901-2005.

\bibitem{Ba} Barbalat, I.: \textit{Systemes dequations differentielles d oscillations non Lineaires.} Rev. Math. Pures Appl. {\bf 4} (1959), 267-270.

\bibitem{B-H} Bellomo, N. and Ha, S.-Y.: \textit{A quest toward a mathematical theory of the dynamics of swarms}. Math. Models Methods Appl. Sci. {\bf 27} (2017), 745-770.

\bibitem{B-C-M} Benedetto, D., Caglioti, E. and Montemagno, U.: \textit{On the complete phase synchronization for the Kuramoto model in the mean-field limit.}    Commun. Math. Sci. {\bf 13} (2015), 1775-1786.
   
\bibitem{B-C-S} Bronski, J., Carty, T. and Simpson, S.: \textit{A matrix valued Kuramoto model.} J. Stat. Phys. {\bf 178} (2020), 595-624.

\bibitem{B-B} Buck, J. and  Buck, E.: \textit{Biology of synchronous flashing of fireflies.} Nature {\bf 211} (1966), 562.

\bibitem{C-H5} Choi, S.-H. and Ha, S.-Y.: \textit{Complete entrainment of Lohe oscillators under attractive and repulsive couplings.} SIAM. J. App. Dyn. {\bf 13} (2013), 1417-1441.

\bibitem {C-H-J-K} Choi, Y., Ha, S.-Y., Jung, S. and Kim, Y.: \textit{Asymptotic formation and orbital stability of phase-locked states for the Kuramoto model.} Physica D {\bf 241} (2012), 735-754.

\bibitem{C-S} Chopra, N. and Spong, M. W.: \textit{On exponential synchronization of Kuramoto oscillators.} IEEE Trans. Automatic Control {\bf 54} (2009), 353-357.

\bibitem{D-F-M-T}Degond, P., Frouvelle, A., Merino-Aceituno, S. and Trescases, A.: \textit{Quaternions in collective dynamics.} Multiscale Model. Simul. {\bf 16} (2018), 28--77.

\bibitem{De} DeVille, L.: \textit{Aggregation and stability for quantum Kuramoto.}  J. Stat. Phys. {\bf174} (2019), 160--187. 

\bibitem{D-X} Dong, J.-G. and Xue, X.: \textit{Synchronization analysis of Kuramoto oscillators.}  Commun. Math. Sci. {\bf 11} (2013), 465-480.

\bibitem{D-B1}  D\"{o}rfler, F. and Bullo, F.: \textit{Synchronization in complex networks of phase oscillators: A survey.} Automatica \textbf{50} (2014), 1539-1564.

\bibitem {D-B} D\"{o}rfler, F. and Bullo, F.: \textit{On the critical coupling for Kuramoto oscillators.} SIAM. J. Appl. Dyn. Syst. \textbf{10} (2011), 1070-1099.

\bibitem{F-L} Frouvelle, A. and Liu, J.-G.: \textit{Long-time dynamics for a simple aggregation equation on the sphere}. Stochastic dynamics out of equilibrium, 457–479, Springer Proc. Math. Stat., 282, Springer, Cham, 2019.

\bibitem{G-H} Golse, F. and Ha, S-Y : \textit{A mean-field limit of the Lohe matrix model and emergent dynamics.} Arch. Rational Mech. Anal. 234 (2019) 1445-1491.

\bibitem{HKLN} Ha, S.-Y., Kim, D., Lee, J. and Noh, S. E.:\textit{Emergence of aggregation in the swarm sphere model with adaptive coupling laws.} Kinetic and Related Models {\bf 12} (2019), 411-444.

\bibitem{HKLN2} Ha, S.-Y., Kim, D., Lee, J. and Noh, S. E.: \textit{Particle and kinetic models for swarming particles on a sphere and stability properties} J. Stat. Phys. {\bf 174} (2019), 622–655.

\bibitem{H-K-R} Ha, S.-Y., Kim, H. W. and Ryoo, S. W.: \textit{Emergence of phase-locked states for the Kuramoto model in a large coupling regime.} Commun. Math. Sci. {\bf 14} (2016), 1073-1091.

%\bibitem{H-Ko-R} Ha, S.-Y., Ko, D. and Ryoo, S. W.: \textit{On the relaxation dynamics of Lohe oscillators on some Riemannian manifolds.} J. Stat. Phys. {\bf 172} (2018), 1427-1478.
%
%\bibitem{HLLP} Ha, S.-Y., Lee, J., Li, Z. and Park, J.: \textit{Emergent dynamics of Kuramoto oscillators with adaptive couplings: conservation law and fast learning.} SIAM J. Appl. Dyn. Syst.  {\bf 17} (2018), 1560-1588.
%

\bibitem{H-L} Ha, S.-Y. and Liu,  J.-G.:  \textit{A simple proof of Cucker-Smale flocking dynamics and mean-field limit}, Commun. Math. Sci. {\bf 7} (2009), 297-325.

%\bibitem{HNP} Ha, S.-Y., Noh, S. E. and Park, J.: \textit{Synchronization of Kuramoto oscillators with adaptive couplings.} SIAM J. Appl. Dyn. Syst. {\bf 15} (2016), 162-194.

\bibitem{H-P1} Ha, S.-Y. and Park, H.: \textit{Emergent behaviors of Lohe tensor flocks.} J. Stat. Phys. {\bf 178} (2020), 1268-1292.

\bibitem{H-P2} Ha, S.-Y. and Park, H.: \textit{From the Lohe tensor model to the Lohe Hermitian sphere model and emergent dynamics.} SIAM J. Appl. Dyn. Syst.  {\bf 19} (2020), 1312-1342.

\bibitem{J-C} Ja\'{c}imovi\'{c}, V. and Crnki\'{c}, A: \textit{Low-dimensional dynamics in non-Abelian Kuramoto model on the 3-sphere.} Chaos {\bf 28} (2018), 083105.

\bibitem{Kim} Kim, D.: \textit{State-dependent dynamics of the Lohe matrix ensemble on the unitary group under the gradient flow.} SIAM J. Appl. Dyn. Syst.  {\bf 19} (2020), 1080-1123.

\bibitem{Ku2} Kuramoto, Y.: \textit{International symposium on mathematical problems in mathematical physics.} Lecture Notes Theor. Phys.  \textbf{30} (1975), 420.

\bibitem{Lo-0} Lohe, M. A.: \textit{Systems of matrix Riccati equations, linear fractional transformations, partial integrability and synchronization.} J. Math. Phys. {\bf 60} (2019), 072701.

\bibitem{Lo-1} Lohe, M. A.: \textit{Quantum synchronization over quantum networks.} J. Phys. A: Math. Theor. {\bf 43} (2010), 465301.

\bibitem{Lo-2} Lohe, M. A.: \textit{Non-abelian Kuramoto model and synchronization.} J. Phys. A: Math. Theor. {\bf 42} (2009), 395101.

\bibitem{M-T-G} Markdahl, J., Thunberg, J. and Gonçalves, J.: \textit{Almost global consensus on the n-sphere.} IEEE Trans. Automat. Control {\bf 63} (2018), 1664-1675. 

\bibitem{Pe} Peskin, C. S.: \textit{Mathematical aspects of heart physiology.} Courant Institute of Mathematical Sciences, New York, 1975.

\bibitem{P-R} Pikovsky, A., Rosenblum, M. and Kurths, J.: \textit{Synchronization: A universal concept in
nonlinear sciences.} Cambridge University Press, Cambridge, 2001.

\bibitem{Sp} Sphohn, H.: \textit{Large scale dynamics of interacting particles,} Springer-Verlag, Berlin and Heidelberg, 1991.

\bibitem{St} Strogatz, S. H.: \textit{From Kuramoto to Crawford: exploring the onset of synchronization in populations of coupled oscillators.} Physica D \textbf{143} (2000), 1-20.

\bibitem{T-M} Thunberg, J., Markdahl, J., Bernard, F. and Goncalves, J.: \textit{A lifting method for analyzing distributed synchronization on the unit sphere.} Automatica J. IFAC {\bf 96} (2018), 253-258. 

\bibitem{T-B-L} Topaz, C. M., Bertozzi, A. L. and Lewis, M. A.: \textit{A nonlocal continuum model for biological aggregation.} Bull. Math. Biol. {\bf 68} (2006), 1601-1623.

\bibitem{T-B} Topaz, C. M. and Bertozzi, A. L.: \textit{Swarming patterns in a two-dimensional kinematic model for biological groups.} SIAM J. Appl. Math. {\bf 65} (2004), 152-174.

\bibitem{VZ} Vicsek, T. and Zefeiris, A.: \textit{Collective motion.} Phys. Rep. {\bf 517} (2012), 71-140.

\bibitem{Vi} Villani, C.: \textit{Optimal Transport: Old and New}, Springer-Verlag, Berlin-Heidelberg 2009. 

\bibitem{Wi2} Winfree, A. T.: \textit{Biological rhythms and the behavior of populations of coupled oscillators.} J. Theor. Biol. \textbf{16} (1967), 15-42.

\bibitem{Wi1} Winfree, A. T.: \textit{The geometry of biological time.} Springer, New York, 1980.

\bibitem{Zhu}Zhu, J.: \textit{Synchronization of Kuramoto model in a high-dimensional linear space.} Physics Letters A {\bf 377} (2013), 2939-2943.

\end{thebibliography}
\end{document}